\newcommand{\R}{\mathbb{R}}
\newcommand{\C}{\mathbb{C}}
\newcommand{\T}{\mathbb{T}}
\newcommand{\Q}{\mathbb{Q}}
\newcommand{\Z}{\mathbb{Z}}
\newcommand{\N}{\mathbb{N}}
\newcommand{\one}{\mathbbm{1}}
\newtheorem{theorem}{Theorem}[section]
\newtheorem{proposition}[theorem]{Proposition}
\newtheorem{lemma}[theorem]{Lemma}
\newtheorem{claim}[theorem]{Claim}
\theoremstyle{definition}
\newtheorem{definition}[theorem]{Definition}
\theoremstyle{remark}
\newtheorem{example}[theorem]{Example}
\newtheorem{remark}[theorem]{Remark}
\newtheorem*{remark*}{Remark}
\def \a {\alpha}
\def \i {\mathbf{i}}
\def \x {\mathbf{x}}
\def \y {\mathbf{y}}
\author{Sebasti\'an Donoso}
\author{Anh N. Le}
\author{Joel Moreira}
\author{Wenbo Sun}
\address{Instituto de Ciencias de la Ingenier\'ia\\ Universidad de O'Higgings\\ Av. Libertador Bernardo O'Higgins 611, Rancagua, Chile }
\email{sebastian.donoso@uoh.cl}
\address{Department of Mathematics\\
	Northwestern University\\
	2033 Sheridan Road, Evanston, IL 60208-2730, USA}
\email{anhle@math.northwestern.edu  }
\address{ Department of Mathematics\\
	Northwestern University\\
	2033 Sheridan Road, Evanston, IL 60208-2730, USA }
\email{joel.moreira@northwestern.edu}
\address{Department of Mathematics\\ The Ohio State University\\ 231 West 18th Avenue,
	Columbus OH, 43210-1174, USA}
\email{sun.1991@osu.edu}
\title{Optimal lower bounds for multiple recurrence}
\begin{document}

\begin{abstract}
Let $(X,{\mathcal B},\mu,T)$ be an ergodic measure preserving system, $A \in \mathcal{B}$ and $\epsilon>0$. We study the largeness of sets of the form
\begin{equation*}
\begin{split}
S = \big\{n\in\mathbb{N}\colon\mu(A\cap T^{-f_1(n)}A\cap T^{-f_2(n)}A\cap\ldots\cap T^{-f_k(n)}A)> \mu(A)^{k+1} - \epsilon \big\}
\end{split}
\end{equation*}
for various families $\{f_1,\dots,f_k\}$ of sequences $f_i\colon \N \to \N$.

For $k\leq 3$ and $f_{i}(n)=if(n)$, we show that $S$ has positive density if $f(n)=q(p_n)$ where $q\in\Z[x]$ satisfies $q(1)$ or $q(-1) =0$ and $p_n$ denotes the $n$-th prime; or when $f$ is a certain Hardy field sequence.
If $T^q$ is ergodic for some $q \in \mathbb{N}$, then for all $r \in \mathbb{Z}$, $S$ is syndetic if $f(n) = qn + r$.

For $f_{i}(n)=a_{i}n$, where $a_{i}$ are distinct integers, we show that $S$ can be empty for $k\geq 4$, and for $k = 3$ we found an interesting relation between the largeness of  $S$ and the abundance of solutions to certain linear equations in sparse sets of integers. We also provide some partial results when the $f_{i}$ are distinct polynomials.
\end{abstract}

\maketitle

\section{Introduction}
\subsection{Historical background}
The classical Poincar\'e recurrence theorem states that for every measure preserving system $(X,{\mathcal B},\mu,T)$ and every set $A\in{\mathcal B}$ with $\mu(A)>0$, there exists some $n\in\N$ such that $\mu(A\cap T^{-n}A)>0$. This result was improved by Khintchine in \cite{Khintchine34}, who showed that under the same conditions, for every $\epsilon>0$, the set
$$S:=\big\{n \in \N \colon \mu(A\cap T^{-n}A)>\mu(A)^2-\epsilon\big\}$$
is \emph{syndetic}, meaning that it has bounded gaps.
Taking a mixing system, one sees that the bound $\mu(A)^2$ is optimal.

In \cite{Furstenberg77}, Furstenberg established a multiple recurrence theorem, showing that for every measure preserving system $(X,{\mathcal B},\mu,T)$,  every $k\in\N$ and  set $A\in\mathcal{B}$ with $\mu(A)>0$, there exists a syndetic set $S\subset\N$ such that for all $n\in S$, we have
\begin{equation}\label{eq_intro1}
\mu(A\cap T^{-n}A\cap\cdots\cap T^{-kn}A)>0.
\end{equation}

One could hope to improve Furstenberg's multiple recurrence theorem in the same way that Khintchine's theorem strengthens Poincar\'e's.
Since for a system mixing of all orders, the left hand side of \eqref{eq_intro1} approaches $\mu(A)^{k+1}$ as $n\to\infty$, one could hope that under the same conditions as Furstenberg's multiple recurrence theorem, for every $\epsilon>0$, the set
\begin{equation}\label{eq_intro2}
\big\{n\in \N \colon\mu(A\cap T^{-n}A\cap T^{-2n}A\cap\cdots\cap T^{-kn}A)>\mu(A)^{k+1}-\epsilon\big\}
\end{equation}
is syndetic.
This was showed to be true by Furstenberg when the system is weakly mixing. The general case was finally settled by Bergelson, Host and Kra in \cite{Bergelson_Host_Kra05},
who showed that if the system $(X,{\mathcal B},\mu,T)$ is ergodic, then the set in \eqref{eq_intro2} is syndetic when $k=1,2,3$ (with the case $k=1$ following from Khintchine's theorem).
However, the set in \eqref{eq_intro2} may be empty if the system is not ergodic or if $k\geq4$:
\begin{theorem}[{\cite[Theorems 2.1 and 1.3]{Bergelson_Host_Kra05}}]\label{thm_bhknegative}
	There exist a non-ergodic measure preserving system $(X,{\mathcal B},\mu,T)$ and for each $\ell\in\N$ a set $A\in{\mathcal B}$ with $\mu(A)>0$ such that for every $n \in \N\setminus \{0 \}$
	$$\mu(A\cap T^{-n}A\cap T^{-2n}A)< \mu(A)^\ell$$
	There exist a totally ergodic measure preserving system $(X,\mathcal{B},\mu,T)$ and for each $\ell\in\N$ a set $A\in{\mathcal B}$ with $\mu(A)>0$ such that for every $n \in \N\setminus \{0 \}$
	$$\mu(A\cap T^{-n}A\cap \cdots\cap T^{-4n}A)< \mu(A)^\ell$$
\end{theorem}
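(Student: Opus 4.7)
The plan is to construct, for each of the two assertions, a single measure preserving system together with a family of sets $A_\ell$ (one for each $\ell \in \N$) witnessing the required upper bound.

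For the first (non-ergodic) assertion I would use the skew product $T \colon \T^2 \to \T^2$ given by $T(x, y) = (x, y + x)$, with Lebesgue measure on $\T^2$. The first coordinate is $T$-invariant, so the system is not ergodic: its ergodic components are the fibres $\{x\} \times \T$, on which $T$ acts as rotation by $x$. For a cylinder $A = \T \times B$ one unwinds the definition of $T^{n}$ to get
\[
\mu\bigl(A \cap T^{-n}A \cap T^{-2n}A\bigr) \;=\; \int_{\T} \bigl|B \cap (B - nx) \cap (B - 2nx)\bigr| \, dx,
\]
and for any $n \neq 0$, the substitution $u = nx$ together with equidistribution shows this equals $c(B) := \int_{\T} |B \cap (B - u) \cap (B - 2u)| \, du$, a quantity independent of $n$. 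The problem thus reduces to the additive-combinatorial one of producing, for each $\ell$, a set $B \subset \T$ with $c(B) < |B|^\ell$. I would do this by placing short arcs on $\T$ at the positions of a Behrend-type $3$-AP-free subset of $\Z/N\Z$ and tuning the arc-width and $N$ against $\ell$.

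For the second (totally ergodic) assertion I would use the $2$-step nilsystem $T(x,y) = (x+\alpha, y+x)$ on $\T^2$ with $\alpha$ irrational, which is totally ergodic by Weyl's theorem. A calculation gives $T^{jn}(x,y) = (x + jn\alpha, \, y + jnx + \binom{jn}{2}\alpha)$, so for $A = \T \times B$ the five-fold intersection, after setting $u = nx$ and absorbing a linear-in-$j$ shift by translating $u$, becomes an integral over $(y, u) \in \T^2$ of the indicator of the configuration
\[
y + ju + \tbinom{j}{2} w \in B, \qquad j = 0, 1, 2, 3, 4,
\]
where $w = n^2 \alpha \bmod 1$. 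For each $n$ this pins down a particular value of $w$, and we need the integral to be small for every such $w$. I would then produce $B$ via a Behrend-type construction adapted to length-$5$ progressions with prescribed second difference, verifying that the count of such quadratic configurations is small uniformly in $w \in \T$.

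The main obstacle is the calibration step. In each case we must ensure that the multiple-intersection integral drops below $|B|^\ell$ for \emph{every} $\ell$, so the Behrend-type gain has to beat arbitrarily large powers of $|B|$. Behrend's construction in $\Z/N\Z$ gives a super-polynomial saving of $\exp(-c\sqrt{\log N})$ over the random count, and the task is to translate this into a gain that, after lifting to $\T$ and (in the second case) after uniformising in the nuisance parameter $w = n^2\alpha$, remains decisive when compared against $|B|^\ell$. Handling the $w$-uniformity in the nilsystem case is the most delicate point; everything else is essentially a direct computation using Weyl equidistribution and the explicit form of $T^n$.
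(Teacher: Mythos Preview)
The paper does not itself prove this theorem; it is quoted from \cite{Bergelson_Host_Kra05}. The natural points of comparison are the paper's \cref{sec_k=5}, which adapts Ruzsa's proof of the second assertion to general $a_1<\dots<a_5$, and the proof of \cref{proposition:commutingpoly}, which is in the spirit of the first assertion.

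Your plan for the non-ergodic assertion matches the Bergelson--Host--Kra construction exactly and is correct. (One nitpick: ``equidistribution'' is not needed for the substitution $u=nx$; the map $x\mapsto nx$ on $\T$ is simply measure-preserving for $n\neq0$.)

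For the totally ergodic assertion your outline is also correct, but your framing of the ``main obstacle'' as uniformity in $w=n^2\alpha$ misses the clean route. Rather than fixing $w$ and seeking a set with few quadratic configurations of that prescribed second difference---which would give three affine relations with $w$-dependent constants---the Ruzsa argument (carried out for general $a_i$ in \cref{lemma_Ruzsa} and \cref{lemma_samenorm}) treats $y,u,w$ as three free parameters. The five numbers $y_j=y+ju+j^2w$ then satisfy two \emph{fixed} integer linear relations, independent of $w$, since the $5\times3$ matrix $(j^i)_{0\le j\le4,\,0\le i\le2}$ has rank~$3$. One builds a Behrend-type ``digits on a sphere'' set $\Lambda\subset[L]$ of size $L^{1-\epsilon}$ with no non-trivial simultaneous solution to those two equations, and takes $B$ to be the union of short intervals indexed by $\Lambda$. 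Any point in the fivefold intersection is then forced to have all five $y_j$ in a single short interval, which pins $x$ to a set of measure $O(1/L)$; the parameter $w$ never needs separate control. So the delicacy you anticipate dissolves once you pass from the two-parameter (fixed-$w$) picture to the three-parameter one.
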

The first part of this theorem explains why one needs to focus on ergodic systems when studying optimal multiple recurrence.

Furstenberg's multiple recurrence theorem has been extended in several different directions, each leading to the question of whether (or under which conditions) optimal recurrence can be achieved.
In this paper, we are mostly concerned with expressions of the form
\[
    \mu(A \cap T^{-f_1(n)} A \cap T^{-f_2(n)} A \cap \ldots \cap T^{-f_k(n)}A)
\]
for various families $\{f_1,\dots,f_k\}$ of functions $f_i\colon \N\to\Z$.
In most cases where recurrence has been established, optimal recurrence can be obtained for weakly mixing systems (see \cite{Bergelson87} when the $f_i$ are polynomials and \cite{Bergelson_Knutson09,Frantzikinakis10,Frantzikinakis15} for more general $f_i$), or when the functions are ``independent'' (see \cite{Frantzikinakis_Kra05b,Frantzikinakis_Kra06} for the case of linearly independent polynomials and \cite{Frantzikinakis15} for more general $f_i$ with different growth).
In the general case, besides the aforementioned paper \cite{Bergelson_Host_Kra05}, the main progress was obtained by Frantzikinakis in \cite{Frantzikinakis08}, where the case $k \leq 3$ and the $f_i$ are polynomials is studied in detail.

\subsection{Optimal recurrence along \texorpdfstring{$\mathbf{\{f(n),2f(n),\dots,kf(n)\}}$}{(f(n), 2f(n),..., kf(n))}}

Our first result concerns the sequence of prime numbers and answers a question of Kra.
Multiple recurrence along polynomials evaluated at primes was established by Frantzikinakis, Host and Kra in \cite{Frantzikinakis_Host_Kra07,Frantzikinakis_Host_Kra13} and Wooley and Ziegler in \cite{Wooley_Ziegler12}.
Our result states that one can also obtain optimal recurrence in this setting.
\begin{theorem}\label{thm_primesOR}
	Let $(p_n)_{n\in\N}$ be the increasing enumeration of the primes. Let $(X,{\mathcal B},\mu,T)$ be an ergodic measure preserving system and let $f\in\Z[x]$ be such that $f(1)=0$ or $f(-1) = 0$.
	Then for every $A\in{\mathcal B}$, $\epsilon>0$ and $k\in\{1,2,3\}$, the set
	\begin{equation}\label{eq_thm_primesOR}
	\big\{n\in\mathbb{N}\colon\mu(A\cap T^{- f(p_n)}A\cap T^{- 2f(p_n)}A\cap\cdots\cap T^{-kf(p_n)}A)>\mu(A)^{k+1}-\epsilon\big\}
	\end{equation}
	has positive lower density\footnote{The lower density of a set $E\subset\N$ is defined to be $\underline{d}(E)=\liminf\limits_{N\to\infty}\frac{|E\cap\{1,\dots,N\}|}N$.}.
\end{theorem}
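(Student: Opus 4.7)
\medskip

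\noindent\textbf{Proof plan.} The approach is to reduce, via the Host--Kra structure theorem, to an equidistribution question on nilmanifolds, and then combine the multiple recurrence lower bound of Bergelson--Host--Kra with known equidistribution results for polynomial sequences evaluated at the primes. By a standard pigeonhole argument (using that the correlation is bounded above by $\mu(A) \leq 1$), it suffices to prove the Ces\`aro lower bound
\[
\liminf_{N \to \infty}\frac{1}{N}\sum_{n=1}^N \mu\bigl(A \cap T^{-f(p_n)}A \cap T^{-2f(p_n)}A \cap \cdots \cap T^{-kf(p_n)}A\bigr) \geq \mu(A)^{k+1},
\]
since otherwise the averages could be bounded above by $\mu(A)^{k+1}-\epsilon+o(1)$, contradicting the liminf. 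For $f(x)=x$ and $k \leq 3$ this integer-level inequality follows from the Bergelson--Host--Kra theorem \cite{Bergelson_Host_Kra05}, and Frantzikinakis \cite{Frantzikinakis08} extended it to general polynomial iterates $\{f(n),2f(n),\dots,kf(n)\}$ for $f \in \Z[x]$.

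The core of the argument is therefore the transfer from integers to primes. By the Host--Kra structure theorem, for $k \leq 3$ the multiple correlation $n \mapsto \mu(A \cap T^{-n}A \cap \cdots \cap T^{-kn}A)$ is controlled, up to negligible error, by a $k$-step nilsequence $n \mapsto F(g^n x,g^{2n}x,\dots,g^{kn}x)$ on some nilmanifold $G/\Gamma$. Substituting $n = f(p_m)$ reduces the problem to evaluating Ces\`aro averages of $F(g^{f(p_m)}x,\ldots,g^{kf(p_m)}x)$ and matching them with the corresponding integer averages along $f(n)$. The hypothesis $f(1)=0$ or $f(-1)=0$ is exactly what enables this matching: if, say, $f(1)=0$, then $f(x)=(x-1)h(x)$, so $f(p_m) \equiv 0 \pmod{q}$ whenever $p_m \equiv 1 \pmod{q}$, which by Dirichlet's theorem occurs for a positive density of $m$; this ensures that $(f(p_m)\bmod q)$ meets the same residues as $(f(n)\bmod q)$ in the correct proportions. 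With this congruence input in hand, the equidistribution theorems of Frantzikinakis--Host--Kra \cite{Frantzikinakis_Host_Kra07,Frantzikinakis_Host_Kra13} and Wooley--Ziegler \cite{Wooley_Ziegler12} for polynomial sequences at primes on nilmanifolds yield that the averages along $(f(p_m))$ have the same limit as those along $(f(n))$.

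The principal technical obstacle I anticipate is verifying that this limit is \emph{exactly} the one needed in order to inherit the lower bound $\mu(A)^{k+1}$, rather than merely showing that the prime averages converge to \emph{some} value. Concretely, on the $k$-step nilfactor one must separately handle the connected component of $G$ (where Leibman's equidistribution theorem and its prime analogue give uniform averaging) and the finite ``horizontal'' quotient (where the Dirichlet-type congruence conditions enforced by $f(\pm 1)=0$ are essential). Once this matching is established, the integer-level inequality of Bergelson--Host--Kra--Frantzikinakis transfers to the prime-level liminf, which by the opening pigeonhole reduction completes the proof.
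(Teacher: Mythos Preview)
Your reduction to the Ces\`aro lower bound
\[
\liminf_{N\to\infty}\frac{1}{N}\sum_{n=1}^N \mu\bigl(A\cap T^{-f(p_n)}A\cap\cdots\cap T^{-kf(p_n)}A\bigr)\ge\mu(A)^{k+1}
\]
is a reduction to a statement that is in general \emph{false}, already for integer averages with $f(n)=n$ and $k=2$. The Bergelson--Host--Kra theorem asserts that $\{n:\mu(A\cap T^{-n}A\cap T^{-2n}A)>\mu(A)^3-\epsilon\}$ is syndetic; it does \emph{not} assert that the Ces\`aro average of the correlation is at least $\mu(A)^3$, and Frantzikinakis' polynomial extension is likewise a syndeticity statement. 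For the irrational circle rotation and $A\subset\T$ a union of short intervals modeled on a Behrend set, one has $\lim_N\frac1N\sum_n\mu(A\cap T^{-n}A\cap T^{-2n}A)=\iint 1_A(x)1_A(x+t)1_A(x+2t)\,dx\,dt\ll\mu(A)^3$. (BHK survives because for the syndetic but sparse set of $n$ with $n\alpha$ near $0$ the correlation is close to $\mu(A)$.) Your pigeonhole implication is valid, but it lands on a target that cannot be hit; consequently the programme of matching prime and integer \emph{averages} cannot succeed, regardless of how carefully the equidistribution step is carried out.

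The paper's argument avoids averages entirely. It applies Leibman's decomposition directly to $\phi(m)=\mu\big(A\cap T^{-g_1(m)}A\cap T^{-g_2(m)}A\cap T^{-g_3(m)}A\big)$ with $g_i(x)=i\,f(x\pm1)$, writing $\phi=\psi+\delta$ where $\psi$ is uniformly close to a basic nilsequence $F(\tau^m 1_Y)$ and $\delta$ is null in uniform density (and, by \cite{Le17}, also null along $p_n\mp1$). Frantzikinakis' Theorem~C gives a syndetic set of $m$ with $\phi(dm)>\mu(A)^{k+1}-\epsilon/4$, where $d$ is the number of connected components of $Y$; since $\delta$ is null along this syndetic set, one finds a \emph{single} $m$ with $F(\tau^{dm}1_Y)>\mu(A)^{k+1}-\epsilon/2$, and by continuity an open $U$ in the identity component $Y_0$ on which $F>\mu(A)^{k+1}-3\epsilon/4$. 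Equidistribution of $(\tau^{p_n-1}1_Y)$ on $Y_0$ (restricted to $p_n\equiv1\bmod d$) then gives positive lower density of $n$ with $\tau^{p_n-1}1_Y\in U$, and discarding a zero-density set where $\delta(p_n-1)$ is large yields the conclusion. The missing idea in your plan is precisely this passage from syndeticity to a single large value of the nilsequence, followed by continuity and pointwise equidistribution, rather than any comparison of averages.
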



We remark that the set in \eqref{eq_thm_primesOR} is not syndetic in general. In fact, it follows from \cite{Shiu00} that when $f(x)=x-1$, for every non-trivial finite system, there exists $A\in{\mathcal B}$ such that the set in \eqref{eq_thm_primesOR} has unbounded gaps.

A similar result can be obtained if $f(p_n)$ is replaced with $\lfloor f(n)\rfloor$, where $f$ is a function belonging to a Hardy field with polynomial growth and sufficiently far away from $\Q[x]$ and $\lfloor \cdot \rfloor$ indicates taking integer part. More precisely,
denote by $\mathcal{G}$ the set of all equivalence classes of smooth functions $\R\to\R$, where $f \sim g$ if there exists a constant $c > 0$ such that $f(x) = g(x)$ for all $x > c$. A \emph{Hardy field} is a subfield of the ring $(\mathcal{G}, +, \times)$ which is closed under differentiation.
Let $\mathcal{H}$ be the union of all Hardy fields.
We say that a function $f$ has polynomial growth if there exists $d \in \N$ such that $f(x)/x^d \to 0$ as $x \to \infty$.
Multiple recurrence for functions in $\mathcal{H}$ of polynomial growth was obtained by Frantzikinakis and Wierdl \cite{Frantzikinakis_Wierdl09,Frantzikinakis10}.

\begin{theorem}
	\label{theorem:glob-Hardy}
	Let $f \in \mathcal{H}$ have polynomial growth and satisfy $\big|f(x)-cq(x)\big|/\log(x)\to\infty$ for every $c\in\R$, $q\in\Z[x]$.
	Then for every ergodic measure preserving system $(X,{\mathcal B},\mu,T)$, $A\in{\mathcal B}$, $\epsilon>0$ and  $k\in\{1,2,3\}$, the set
	\begin{equation}
	\label{eq_thm_HardyOR}
	    \big\{n\in\mathbb{N}\colon\mu(A\cap T^{-\lfloor f(n)\rfloor} A \cap T^{-2\lfloor f(n)\rfloor} A \cap\cdots\cap T^{-k\lfloor f(n)\rfloor}A)>\mu(A)^{k+1}-\epsilon\big\}
	\end{equation}
	has positive lower density.
\end{theorem}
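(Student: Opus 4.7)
The plan is to adapt the strategy used for Theorem~\ref{thm_primesOR} to this Hardy-field setting, substituting the equidistribution results on nilmanifolds for prime sequences by the corresponding results for Hardy-field sequences due to Frantzikinakis and Frantzikinakis--Wierdl.

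\textbf{Reduction and decomposition.} For $k\le 3$, the Host--Kra structure theorem together with the analysis of Bergelson--Host--Kra underlying the optimal recurrence result for $\{n,2n,\ldots,kn\}$ allows one to decompose the multicorrelation
\[
\phi(m) := \mu\bigl(A \cap T^{-m}A \cap T^{-2m}A \cap \cdots \cap T^{-km}A\bigr)
\]
as $\phi = \phi_s + \phi_u$, where $\phi_s(m) = F(g^{m}\Gamma, g^{2m}\Gamma, \ldots, g^{km}\Gamma)$ arises from a continuous function on a power of a $(k-1)$-step nilmanifold $G/\Gamma$, and $\phi_u$ is uniform in the sense that the relevant Host--Kra seminorm $\|\phi_u\|_{U^{k+1}}$ is as small as desired. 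The BHK theorem guarantees that the decomposition can be arranged so that $\lim_{N\to\infty} \frac{1}{N}\sum_{m\le N}\phi_s(m) \ge \mu(A)^{k+1} - \epsilon/2$.

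\textbf{Transferring to the Hardy-field index.} The hypothesis $|f(x)-cq(x)|/\log x \to \infty$ is precisely the condition under which Frantzikinakis established equidistribution of orbits $(g^{\lfloor f(n)\rfloor}\Gamma)_{n\in\N}$ on nilmanifolds, with the same limit as the integer orbit $(g^{m}\Gamma)_{m\in\Z}$. Applied to the structured part, this yields
\[
\lim_{N\to\infty}\frac{1}{N}\sum_{n\le N}\phi_s(\lfloor f(n)\rfloor) = \lim_{N\to\infty}\frac{1}{N}\sum_{m\le N}\phi_s(m) \ge \mu(A)^{k+1} - \epsilon/2.
\]
For the uniform part, the companion results in the same circle of ideas show that $\bigl|\tfrac{1}{N}\sum_{n\le N}\phi_u(\lfloor f(n)\rfloor)\bigr|$ is bounded by a suitable Host--Kra seminorm of $\phi_u$, hence negligible after choosing the decomposition appropriately. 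Combining these, $\tfrac{1}{N}\sum_{n\le N}\phi(\lfloor f(n)\rfloor)$ converges to a limit at least $\mu(A)^{k+1} - \epsilon$.

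\textbf{From Cesaro bound to positive density, and main obstacle.} Since $\phi(\lfloor f(n)\rfloor)\in[0,\mu(A)]$ and its Cesaro average converges to some $L\ge\mu(A)^{k+1}-\epsilon$, passing to a subsequence realizing the liminf of $|E\cap[1,N]|/N$ for $E=\{n\colon \phi(\lfloor f(n)\rfloor)>\mu(A)^{k+1}-2\epsilon\}$ and comparing with the bound $\phi\le\mu(A)$ gives $\underline{d}(E)\ge \epsilon/(\mu(A)-\mu(A)^{k+1}+\epsilon)>0$, as desired. The core technical obstacle is Step~2: one needs not only the equidistribution of $(g^{\lfloor f(n)\rfloor}\Gamma)$ on $(k-1)$-step nilmanifolds, but also the compatibility of this equidistribution with the BHK structured/uniform decomposition, i.e.\ a Hardy-field analogue of the $U^{k+1}$-control of multiple averages. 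The hypothesis on $f$ is indispensable, as any polynomial resonance $f(x)\approx cq(x)+O(\log x)$ would trap the orbit on a proper sub-nilmanifold and invalidate the $\mu(A)^{k+1}$ lower bound.
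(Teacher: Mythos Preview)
There is a genuine gap. Your central claim---that one can arrange the decomposition so that
\[
\lim_{N\to\infty}\frac{1}{N}\sum_{m\le N}\phi_s(m)\ \ge\ \mu(A)^{k+1}-\epsilon/2
\]
---is false in general. Any structured/uniform splitting with $\|\phi_u\|_{U^{k+1}}$ small forces the Ces\`aro mean of $\phi_u$ to be small as well (monotonicity of Gowers norms), so your inequality would imply that the \emph{plain} Ces\`aro average $\lim_N\frac1N\sum_{m\le N}\phi(m)$ is at least $\mu(A)^{k+1}-\epsilon/2$. This is not what Bergelson--Host--Kra proves, and it is not true. Already for $k=2$, take the ergodic rotation by $1$ on $\Z/3\Z$ and $A=\{0,1\}$: then $\phi(m)=\mu(A\cap T^{-m}A\cap T^{-2m}A)$ equals $2/3$ when $3\mid m$ and $0$ otherwise, so its Ces\`aro average is $2/9<8/27=\mu(A)^3$. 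Since under the hypotheses on $f$ the sequence $\lfloor f(n)\rfloor$ equidistributes modulo $3$, the same value $2/9$ is the limit of $\frac1N\sum_{n\le N}\phi(\lfloor f(n)\rfloor)$, and your final averaging step yields nothing. The BHK theorem gives syndeticity of a level set, not a lower bound on the average; the whole point of averaging over Bohr sets in their proof is that the unrestricted average is insufficient.

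The paper's argument avoids averages entirely and works pointwise on the nilmanifold. One uses Leibman's decomposition $\phi=\psi+\delta$ with $\psi$ a nilsequence and $\delta\to0$ in uniform density, and approximates $\psi$ uniformly by a basic nilsequence $m\mapsto F(\tau^m 1_Y)$. The BHK/Frantzikinakis input is then used only to locate a \emph{single} $m$ with $F(\tau^m 1_Y)>\mu(A)^{k+1}-\epsilon/2$; continuity of $F$ upgrades this to an open set $U\subset Y$ on which $F>\mu(A)^{k+1}-3\epsilon/4$. Frantzikinakis's equidistribution theorem for $(\tau^{\lfloor f(n)\rfloor}1_Y)_n$ on $Y$ then gives positive density of $n$ landing in $U$, and one removes the null part $\delta(\lfloor f(n)\rfloor)$ separately. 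The obstacle you flagged (compatibility of equidistribution with the decomposition) is not the issue; the issue is that the large values of the multicorrelation sit over a thin region of the nilmanifold and must be detected by hitting that region, not by a global average.
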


Examples of functions that satisfy the conditions in \cref{theorem:glob-Hardy} are $f(x)=x^{c}$ where $c > 0$, $c \not \in \mathbb{Z}$, $f(x)=x \log x$, $f(x)=x^2 \sqrt{2} + x \sqrt{3}$, and $f(x)=x^3 + (\log x)^3$.

We point out that in \cref{theorem:glob-Hardy}, we cannot replace ``has positive density'' by ``is syndetic''. This is easy to see for certain functions $f$ growing slowly (for instance $f(x)=x^c$ when $c<1$).
For such functions, $\lfloor f(n) \rfloor$ is constant in arbitrarily long intervals and takes every value which is large enough.
Therefore there are gaps of the set \eqref{eq_thm_HardyOR} which are arbitrarily long.

On the other hand, we expect the set in \eqref{eq_thm_HardyOR} to be \emph{thick}, i.e. to contain arbitrarily long intervals.
Some evidence in this direction is given in \cite{Bergelson_Moreira_Richter18}, where the set $\big\{n\in\mathbb{N}\colon\mu(A\cap T^{-\lfloor f(n)\rfloor}A\cap\cdots\cap T^{-k\lfloor f(n)\rfloor}A)>0\big\}$ is shown to be thick, as well as the set in \eqref{eq_thm_HardyOR} when $k=1$.

Our third result concerns sequences of the form $f(n)=qn+r$ for fixed $q,r\in\Z$ and was suggested by Kra.
\begin{theorem}\label{thm_progressionOR}
	Let $q,r\in\Z$, with $q>0$, and $(X,{\mathcal B},\mu,T)$ be a measure preserving system with $T^q$ ergodic. Let $A\in{\mathcal B}$, $\epsilon>0$ and $k\in\{1,2,3\}$.
	Then the set
	\begin{equation}\label{eq_thm_progressionOR}
\big\{n\in\mathbb{N}\colon\mu(A\cap T^{-(qn+r)}A\cap T^{-2(qn+r)}A\cap\cdots\cap T^{-k(qn+r)}A)>\mu(A)^{k+1}-\epsilon\big\}
	\end{equation}
	is syndetic.
\end{theorem}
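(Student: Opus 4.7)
Since $T^q$ is ergodic, $T$ itself is ergodic (any $T$-invariant function is also $T^q$-invariant). The plan is to apply the Bergelson--Host--Kra theorem to $T$, use the nilsequence structure of the resulting multi-correlation sequence, and transfer syndeticity to the arithmetic progression $qn + r$ via ergodicity of $T^q$ on the characteristic nilfactor.

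Set $\phi(m) := \mu(A \cap T^{-m}A \cap T^{-2m}A \cap \cdots \cap T^{-km}A)$. For $k \in \{1,2,3\}$, the Bergelson--Host--Kra theorem yields $\liminf_N \frac{1}{N}\sum_{m=1}^N \phi(m) \geq \mu(A)^{k+1}$; combined with the Host--Kra structure theorem, this gives a decomposition $\phi(m) = \psi(m) + \eta(m)$, in which $\psi(m) = F(g^m \Gamma)$ is a nilsequence arising from a nilsystem factor $(G/\Gamma, g)$ of the Host--Kra factor $\mathcal{Z}_k(T)$, with $F$ continuous on $G/\Gamma$, and $\eta$ is a controlled error term.

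Substituting $m = qn + r$ yields $\psi(qn+r) = F(g^r (g^q)^n \Gamma)$, which is a nilsequence in $n$ driven by the nilrotation $g^q$ with initial point $g^r\Gamma$. The hypothesis that $T^q$ is ergodic descends to the factor $G/\Gamma$, so $g^q$ acts ergodically---and hence, by Parry's theorem, uniquely ergodically and minimally---on $G/\Gamma$. The orbit $\{(g^q)^n g^r \Gamma\}_{n \in \N}$ therefore equidistributes in $G/\Gamma$, giving
\begin{equation*}
\lim_{N \to \infty} \frac{1}{N}\sum_{n=1}^N \psi(qn+r) = \int_{G/\Gamma} F \, dm_{G/\Gamma} \geq \mu(A)^{k+1}.
\end{equation*}
The open set $U := \{y \in G/\Gamma : F(y) > \mu(A)^{k+1} - \epsilon/2\}$ then has positive Haar measure (otherwise the integral above would be too small), and by minimality of $g^q$ on $G/\Gamma$, the set $\{n \in \N : (g^q)^n g^r \Gamma \in U\}$ is syndetic. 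On this syndetic set, $\psi(qn+r) > \mu(A)^{k+1} - \epsilon/2$.

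The main technical obstacle is controlling the error $\eta(qn+r)$ along the progression: one needs $|\eta(qn+r)| < \epsilon/2$ on a subset of $\N$ whose intersection with the syndetic set above remains syndetic. For $k = 1$, this follows from the spectral theorem applied to the Kronecker factor, where the non-nilsequence part corresponds to a continuous spectral measure that vanishes in uniform density. For $k \in \{2, 3\}$, it relies on the full Host--Kra structure theorem for multi-correlation sequences; here the ergodicity of $T^q$ is crucial for ruling out pathological concentration of $\eta$ along the AP $qn+r$, as it guarantees that the characteristic factor itself behaves equidistributionally with respect to the AP.
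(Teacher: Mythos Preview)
Your overall architecture is the same as the paper's: decompose $\phi(m)=\psi(m)+\eta(m)$ into nilsequence plus null-in-uniform-density, find an open set on the nilmanifold where $F$ is large, and use equidistribution along $m=qn+r$ to get syndeticity. But two steps, as written, are genuine gaps.

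\textbf{Ergodicity of $g^q$ on the nilmanifold.} You assert that the nilsystem $(G/\Gamma,g)$ carrying $\psi$ is a \emph{factor} of $\mathcal{Z}_k(T)$, and deduce that $T^q$ ergodic forces $g^q$ ergodic. The Leibman decomposition does not hand you $\psi$ as a basic nilsequence on a factor of $X$; one only gets $\psi$ as a uniform limit of basic nilsequences $F(\tau^n 1_Y)$ on nilmanifolds $Y$ with no a priori relation to $X$. The paper closes this gap by invoking \cite[Theorem~2.1]{Moreira_Richter19}, which shows that the discrete spectrum of $(Y,\tau)$ is contained in that of $(X,T)$. Since $T^q$ ergodic is equivalent to $\sigma(T)\cap\langle 1/q\rangle=\{0\}$, the same holds for $\tau$, whence $\tau^q$ is ergodic (indeed uniquely ergodic) on $Y$. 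Without this spectral-inclusion step or an equivalent argument, your deduction that $g^q$ acts minimally is unjustified.

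\textbf{The error term.} Your discussion here is both incomplete and misleading. You suggest that controlling $\eta(qn+r)$ requires ergodicity of $T^q$ and separate spectral/structural arguments for $k=1$ versus $k\in\{2,3\}$. In fact it is elementary and uniform in $k$: the Leibman decomposition already gives
\[
\lim_{N-M\to\infty}\frac{1}{N-M}\sum_{n=M}^{N-1}|\eta(n)|=0.
\]
If $S\subset\N$ is the syndetic set on which $\psi(qn+r)>\mu(A)^{k+1}-\epsilon/2$, then $\{qn+r:n\in S\}$ has positive lower Banach density, so
\[
\lim_{N-M\to\infty}\frac{1}{|S\cap[M,N)|}\sum_{n\in S\cap[M,N)}|\eta(qn+r)|=0,
\]
and hence $\{n\in S:\phi(qn+r)>\mu(A)^{k+1}-\epsilon\}$ is syndetic. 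No appeal to ergodicity of $T^q$ is needed for this part; that hypothesis is used solely for the equidistribution of the nilsequence part.
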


Observe that the conclusion of this theorem is equivalent to the statement that the intersection
$$\big\{n\in\mathbb{N}\colon\mu(A\cap T^{-n}A\cap T^{-2n}A\cap\cdots\cap T^{-kn}A)>\mu(A)^{k+1}-\epsilon\big\}\cap\big(q\Z+r\big)$$
is syndetic.
By looking at the rotation on $q$ elements, we see that the hypothesis $T^q$ is ergodic is necessary.

If in \eqref{eq_thm_progressionOR} one replaces the optimal lower bound $\mu(A)^{k+1}-\epsilon$ with $0$, then the set is syndetic for any $k\in\N$.
This was proved in \cite{Host_Kra02} for $k=2$ and $k=3$, and for larger $k$, this is essentially the content of \cite[Corollary 6.5]{Frantzikinakis04}; see also \cite{Moreira_Richter19}.

Theorems \ref{thm_primesOR}, \ref{theorem:glob-Hardy} and \ref{thm_progressionOR} will be proved in \cref{s3}.

\subsection{Optimal recurrence along \texorpdfstring{$\mathbf{\{a_1 n, a_2 n, \ldots, a_k n\}}$}{(a1 n, a2 n,..., ak n)}}

Next we study optimal recurrence for the expression
$$\mu(T^{-a_1 n}A\cap T^{- a_2 n}A\cap\cdots\cap T^{- a_k n}A)$$
where $a_1,\dots,a_k$ are distinct integers.
In particular, if $a_i=i$, then the results of Bergelson, Host and Kra tell us that we have optimal recurrence if and only if $k \leq 4$.
More generally, in \cite{Frantzikinakis08} it is proved that if $k \leq 3$, or $k=4$ and $a_2+a_3=a_1+a_4$, then optimal recurrence holds, but any other case is not known.

Expanding  an argument of Ruzsa, presented in the appendix of \cite{Bergelson_Host_Kra05}, we prove that for $k \geq 5$, one does not have optimal recurrence along $\{a_1 n, a_2 n, \ldots, a_k n\}$.
\begin{theorem}
	\label{thm_BadRecurrence}
	Let $a_1< \ldots < a_5 \in \N$. Then there exists an ergodic system $(X,\mathcal{B}, \mu, T)$ such that for every $\ell>0$, there exists a set $A\in\mathcal{B}$ with $\mu(A)>0$ and
	\[
	\mu(T^{-a_1 n} A \cap T^{-a_2 n} A \cap \ldots \cap T^{-a_5 n} A)< \mu(A)^\ell
	\]
	for every $n \in \N\setminus \{0\}$.
\end{theorem}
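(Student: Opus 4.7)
\emph{Plan.} We adapt the Behrend--Ruzsa construction from the appendix of \cite{Bergelson_Host_Kra05}, which treats the specific progression $\{n,2n,3n,4n\}$, to an arbitrary five-term family. A first step: since $T^{-a_1 n}$ is measure-preserving,
\[
\mu(T^{-a_1 n}A\cap\cdots\cap T^{-a_5 n}A)=\mu(A\cap T^{-(a_2-a_1)n}A\cap\cdots\cap T^{-(a_5-a_1)n}A),
\]
so without loss of generality $a_1=0$, and we work with five distinct nonnegative integers $0=a_1<a_2<\cdots<a_5$.

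For the ergodic system we take $X=\T^2$ with the Heisenberg skew-product $T(x,y)=(x+\alpha,y+x)$ for a suitably Diophantine irrational $\alpha$; this is ergodic, and
\[
T^n(x,y)=\bigl(x+n\alpha,\ y+nx+\tbinom{n}{2}\alpha\bigr),
\]
so that the second coordinate at step $a_i n$, namely $Q_i(x,n):=a_i n x+\binom{a_i n}{2}\alpha$, is a quadratic polynomial in $a_i$ with leading coefficient $n^2\alpha/2$, nontrivial whenever $n\neq 0$.

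The combinatorial core is a generalized Behrend-type set: for each $N$ there exists $B_N\subset\{0,\ldots,N-1\}$ with $|B_N|/N\geq e^{-C\sqrt{\log N}}$ containing no nontrivial configuration of the form $\{c_0+c_1 a_i+c_2 a_i^2\pmod{N}\colon i=1,\ldots,5\}$ with $(c_1,c_2)\neq(0,0)$. We obtain $B_N$ by injecting $\{0,\ldots,N-1\}$ into $\{0,\ldots,M\}^d\subset\Z^d$ with $M\asymp N^{1/d}$ and taking the preimage of an integer sphere $\{\|y\|^2=r^2\}$. On such a sphere, the squared norm $\|c_0+ac_1+a^2c_2\|^2$ is a polynomial of degree four in $a$; if it takes the common value $r^2$ at five distinct points $a_i$ it must be constant, which forces $c_1=c_2=0$. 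Then for each $\ell$ we choose $N_\ell$ with $\log N_\ell>C^2(\ell-1)^2$ and put
\[
A_\ell=\T\times\bigcup_{b\in B_{N_\ell}}\bigl[b/N_\ell,(b+1)/N_\ell\bigr)\subset\T^2,
\]
so that $\mu(A_\ell)=|B_{N_\ell}|/N_\ell>0$. The five-fold intersection requires that $\{y+Q_i(x,n)\}_{i=1}^{5}$ lies in the lift of $B_{N_\ell}$; by the Behrend property this forces the quadratic $i\mapsto Q_i(x,n)$ to be constant modulo the $1/N_\ell$-scale, which is impossible for $n\neq 0$, and one concludes
\[
\mu\bigl(A_\ell\cap T^{-a_2 n}A_\ell\cap\cdots\cap T^{-a_5 n}A_\ell\bigr)=O\bigl(\mu(A_\ell)/N_\ell\bigr)<\mu(A_\ell)^\ell.
\]

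\emph{Main obstacle.} The principal difficulties are (i) generalizing Behrend's construction to avoid arbitrary quadratic five-point configurations for the given coefficients $a_i$---handled uniformly by the sphere argument, which only uses that $\|c_0+ac_1+a^2c_2\|^2$ has degree four, independent of the specific $a_i$; and (ii) quantifying the boundary contributions arising because the dynamical shifts $Q_i(x,n)$ are generically not aligned with the $1/N_\ell$-grid. The latter is the main technical point and is controlled by imposing Diophantine conditions on $\alpha$ or by replacing the indicator of $[b/N_\ell,(b+1)/N_\ell)$ with a smooth cut-off of comparable $L^1$ norm.
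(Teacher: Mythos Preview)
Your outline follows the same route as the paper: the skew product on $\T^2$, a Behrend--Ruzsa sphere construction in $\Z^d$ via base-$M$ digits, and the conclusion that membership in the five-fold intersection forces the five labels $b_i$ to coincide, whence $(x,y)$ is confined to a set of measure $O(\mu(A)/N)$.

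One point where your argument is genuinely cleaner than the paper's. The paper proves its key Lemma (``if $b_1,\dots,b_5\in\R^d$ have equal norm and satisfy the two Vandermonde-derived linear relations, then $b_1=\dots=b_5$'') by an explicit coordinate computation it itself calls ``somewhat cumbersome.'' Your observation bypasses this entirely: the two linear relations are equivalent to a parametrization $b_i=c_0+a_ic_1+a_i^2c_2$ with $c_j\in\R^d$, so $P(a):=\|c_0+ac_1+a^2c_2\|^2$ is a real polynomial of degree $\leq4$ taking the same value at five distinct points, hence constant; the $a^4$ coefficient gives $c_2=0$ and then the $a^2$ coefficient gives $c_1=0$. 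That is a strictly simpler proof of the same lemma.

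Two places where your write-up is loose. First, the passage from the real shifts $y+Q_i(x,n)\in\T$ to an \emph{integer} quadratic relation among the $b_i$ is not automatic, and neither Diophantine hypotheses on $\alpha$ nor smoothing are needed. The paper instead introduces an extra scaling parameter $C$: the intervals have width $1/(C^2L)$ while their left endpoints sit on the coarser grid $b/(CL)$. Since the $y_i$ are genuinely quadratic in $a_i$, the exact relations $\sum v_jy_j=0$ hold; this forces $\sum v_jb_j$ to be an integer of modulus $<1$ once $C$ is large, hence zero, and the integer relations lift to $\Z^d$ because the digits are kept small enough to avoid carries. Second, ``constant modulo the $1/N_\ell$-scale, which is impossible for $n\neq0$'' is a misstatement: it is not impossible, it merely pins $x$ to a set of measure $O(1/N_\ell)$ (from, say, $\|Q_2-Q_1\|_\T$ small). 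That constraint is exactly what produces the factor $1/N_\ell$ in your bound $O(\mu(A_\ell)/N_\ell)$, which you then record correctly.
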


The cases not covered by the above results seem difficult to address.
For instance, it is not known whether for every ergodic system, every set $A$ and $\epsilon>0$, the set
\[
    \big\{n \in \N\setminus \{0\}\colon  \mu(A\cap T^{-2n} A\cap T^{-3n}A\cap T^{-4n}A)>\mu(A)^4-\epsilon\big\}
\]
is non-empty or not (let alone syndetic).
In \cite{Frantzikinakis08}, Frantzikinakis showed that a positive answer to this question would imply the existence of solutions to certain linear equations in sparse sets.
We tighten this relationship between optimal recurrence along $\{a_1 n, a_2 n, a_3 n, a_4 n\}$ and abundance of solutions to certain linear equations in sparse sets, by showing that these two phenomenons are essentially equivalent.
To formulate our result, we need to introduce some notation.

\begin{definition}
\label{definition_densities}
	Let $m,d,N\in\N$. Denote $[N]:=\{0,1,\dots,N-1\}$.
	Given a set $E\subseteq [N]^{m}$ and a subspace $V\subseteq\mathbb{Q}^{d\times m}$, define
	\[
	    d_{m,N}(E)=\frac{|E|}{N^m} \qquad \mbox{and} \qquad
	    D_{m,N}(V,E)=\frac{|V\cap E^d|}{|V\cap[N]^{d\times m}|}.
	\]
\end{definition}
 Observe that a point $(x_1,\dots,x_{dm})\in[N]^{d\times m}$ belongs to $V$ if and only if the coordinates $x_1,\dots,x_{dm}$ satisfy some system of linear equations.
The reader should think of $D_{m,N}(V,E)$ as the proportion of solutions to that system with all variables in $E$.

\begin{definition}
  A subset $S\subseteq\mathbb{N}$ is a \emph{Bohr$_{0}$ set} if there exist a compact abelian group $Z$, $\delta>0$ and $\alpha\in Z$ such that $S=\{n\in\N\colon n\alpha\in B(\delta)\}$, where $B(\delta)$ is the ball in $Z$ centered at the identity $0_Z$ with radius $\delta$.
  If $Z$ is a finite dimensional torus, we say that $S$ has \emph{finite rank}.
\end{definition}

\begin{theorem}\label{thm:integerdirect}
    Let $a_{1},\dots,a_{4} \in \N$ be distinct and $V$ be the subspace of $\mathbb{Q}^4$ spanned by $(a^{i}_{1},\dots,a^{i}_{4})$ for $0\leq i\leq 2$.\footnote{We adopt the convention that if some $a_{i}$ equals to 0, then $a_{i}^{0}=0^{0}=1$.}
    Suppose there exist $C>0$ and $\ell \geq 4$ such that for every $m\in\mathbb{N}$, every sufficiently large $N$ and subset $E\subseteq[N]^{m}$, we have $D_{m,N}(V^m,E)> Cd_{m,N}(E)^\ell$.
    Then for every ergodic system $(X,\mathcal{B},\mu,T)$
	and $A\in\mathcal{B}$ with $\mu(A)>0$, there exists a Bohr$_{0}$ set $S\subseteq\mathbb{N}$ with finite rank such that
	\[
	    \limsup_{N-M\to\infty}\frac{1}{\vert S\cap [M,N)\vert}\sum_{n\in S\cap [M,N)}\mu(T^{-a_{1}n}A \cap \dots \cap T^{-a_{4}n}A)\geq C\left(1-\frac4\ell\right)^\ell\mu(A)^{\ell}.
	\]

\end{theorem}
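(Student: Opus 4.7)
The strategy is a three-part scheme familiar from Bergelson--Host--Kra and Frantzikinakis' work: (i) reduce the dynamical problem to one on a nilmanifold, (ii) identify the orbit closure of the tuples $(T^{a_{1}n}x,\dots,T^{a_{4}n}x)$ as a sub-nilmanifold whose description is governed by $V$, and (iii) discretize to convert the resulting integral into the combinatorial quantity $D_{m,N}(V^{m},E)$, to which the hypothesis then applies.

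For step (i), since $V$ encodes only linear and quadratic information, we expect (refining an argument of Frantzikinakis from \cite{Frantzikinakis08}) that a $2$-step pro-nilfactor is characteristic for the average in question; a standard approximation then lets us assume $X=G/\Gamma$ is a connected $2$-step nilmanifold of dimension $m$ and that $T$ is left translation by a nilrotation $g\in G$. Step (ii) invokes Leibman's equidistribution theorem: for $\mu$-a.e.\ $x$, the orbit $\{(T^{a_{1}n}x,\dots,T^{a_{4}n}x):n\in\N\}\subseteq X^{4}$ equidistributes in a translate $Y_{x}$ of a connected sub-nilmanifold. The Lie-algebra description of $Y_{x}$ mirrors the span of the columns $(a_{i}^{0}),(a_{i}^{1}),(a_{i}^{2})$: both the Kronecker-level and quadratic-level projections of $Y_{x}$ are cut out by the same rational linear equations that define $V$, so $Y_{x}$ is a translate of the subgroup of $X^{4}$ prescribed by $V^{m}$.

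For step (iii), fix a fundamental domain for $X$ and discretize at scale $1/N$, producing a set $E\subseteq[N]^{m}$ with $d_{m,N}(E)\to\mu(A)$. By Riemann-sum approximation, the integral $\int_{Y_{x}}\prod_{i=1}^{4}\mathbf{1}_{A}(y_{i})\,dm_{Y_{x}}$ is, up to $o(1)$, a constant multiple of $D_{m,N}(V^{m},E)$, so the hypothesis yields a lower bound of order $C\mu(A)^{\ell}$. To obtain the sharp constant $C(1-4/\ell)^{\ell}\mu(A)^{\ell}$, I apply a reverse-Markov truncation: replace $E$ by the discretization of a level set $\{\mathbb{E}(\mathbf{1}_{A}\mid Z)\geq(1-4/\ell)\mu(A)\}$, which has density at least $(4/\ell)\mu(A)$ by a reverse-Markov inequality; on this level set the integrand is bounded below, and balancing the threshold against the $\ell$-th power in the hypothesis produces the factor $(1-4/\ell)^{\ell}$. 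The Bohr$_{0}$ set $S$ of finite rank is obtained as the preimage of a small ball in the Kronecker factor, on which the equidistribution error in (iii) is uniform; the $\limsup$ along intervals $[M,N)$ then follows by routine averaging.

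The main obstacle is step (iii): making the discretization sufficiently precise that the continuous integral over $Y_{x}$ converts cleanly, uniformly in $x$, into the combinatorial density $D_{m,N}(V^{m},E)$ matching the hypothesis exactly. This hinges on a quantitative version of Leibman's theorem and on choosing fundamental domains compatible with $Y_{x}$; a secondary but still delicate issue is calibrating the reverse-Markov threshold so that the constant $(1-4/\ell)^{\ell}$ emerges precisely, rather than some other combinatorial expression.
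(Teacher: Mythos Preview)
Your steps (i) and (ii) are broadly right and match the paper: one reduces to the $2$-step nilfactor $Z_{2}=G/\Gamma$ (with $G_{2}\cong\T^{m}$), and along the Bohr$_{0}$ set $S_{\delta}$ coming from the Kronecker factor the limit formula of Ziegler type collapses to
\[
\int_{Z_{2}}\int_{Y}\prod_{i=1}^{4}\tilde f(gy_{i}\Gamma)\,d\mu_{Y}\,d\mu_{Z_{2}}(g\Gamma),
\qquad Y=(\overline{V}/\Z^{4})^{m}\subset\T^{4m}.
\]
However, note a subtlety you gloss over: once you invoke the characteristic-factor reduction, the function you integrate is $\tilde f=\mathbb{E}(\mathbf{1}_{A}\mid Z_{2})$, which is $[0,1]$-valued on $Z_{2}$, and for each fibre the restriction $\tilde f_{g}\colon G_{2}\to[0,1]$ is genuinely a \emph{function}, not an indicator. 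So ``discretize $A$ at scale $1/N$ to a set $E\subset[N]^{m}$'' is not available; you must upgrade the set-hypothesis $D_{m,N}(V^{m},E)\geq C\,d_{m,N}(E)^{\ell}$ to a functional inequality for all $c\colon[N]^{m}\to[0,1]$.

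This is where your reverse-Markov step fails to deliver the stated constant. With threshold $t=(1-4/\ell)\mu$ (writing $\mu=\|c\|_{L^{1}}$), the level set $E=\{c\geq t\}$ has $d(E)\geq(4/\ell)\mu$, and the bound you obtain is
\[
t^{4}\cdot C\,d(E)^{\ell}\;\geq\;C\bigl((1-4/\ell)\mu\bigr)^{4}\bigl((4/\ell)\mu\bigr)^{\ell}
=C(1-4/\ell)^{4}(4/\ell)^{\ell}\mu^{\,\ell+4},
\]
not $C(1-4/\ell)^{\ell}\mu^{\ell}$. Optimising the threshold does not help: $\max_{0\leq t\leq\mu}t^{4}(\mu-t)^{\ell}=\dfrac{4^{4}\ell^{\ell}}{(\ell+4)^{\ell+4}}\mu^{\,\ell+4}$, which still has the wrong exponent on $\mu$ and the wrong numerical constant. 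The paper obtains the precise factor $(1-4/\ell)^{\ell}$ by a different mechanism: it first observes that the set-hypothesis is \emph{equivalent} to a weak-$L^{4/\ell}$ bound
\[
\frac{1}{|V\cap[N]^{4}|^{m}}\sum_{(a_{1},\dots,a_{4})\in V^{m}}c(a_{1})\cdots c(a_{4})\;\geq\;C\,\|c\|_{L^{4/\ell}_{w}}^{4},
\]
and then applies the elementary interpolation $\|c\|_{L^{1}}\leq\frac{1}{1-4/\ell}\|c\|_{L^{4/\ell}_{w}}^{4/\ell}\|c\|_{L^{\infty}}^{1-4/\ell}$ to get $\|c\|_{L^{4/\ell}_{w}}^{4}\geq(1-4/\ell)^{\ell}\|c\|_{L^{1}}^{\ell}$. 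That weak-$L^{p}$ step---using the supremum over \emph{all} level sets rather than a single one---is exactly the missing idea in your plan. A secondary point: the Bohr$_{0}$ set is not merely controlling ``equidistribution error''; restricting to $S_{\delta}$ and letting $\delta\to0$ is what forces the outer variable $g_{1}$ in the Ziegler formula into $G_{2}$, which is what makes the inner integral live on the torus $Y$ governed by $V$ in the first place.
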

\begin{remark*}
    It is easy to see that the conclusion of \cref{thm:integerdirect} implies that the set
    \[
    \label{1234}
    \left\{n\in\mathbb{N}\colon \mu(T^{-a_{1}n}A\cap\dots\cap T^{-a_{4}n}A) > C\left(1-\frac4\ell\right)^\ell\mu(A)^{\ell}-\epsilon\right\}
    \]
    is syndetic for all $\epsilon>0$.
\end{remark*}

We have a partial converse to \cref{thm:integerdirect}.
\begin{theorem}\label{thm:integerinverse}
	Let $a_{1},\dots,a_{4}\in \N$ be distinct and
	$V$ be the subspace of $\mathbb{Q}^4$ spanned by $(a^{i}_{1},\dots,a^{i}_{4})$ for $0\leq i\leq 2$.
Suppose there exist $C > 0$ and $\ell \geq 4$ such that for every ergodic system $(X,\mathcal{B},\mu,T)$
		 and $A\in\mathcal{B}$ with $\mu(A)>0$, there exists a Bohr$_{0}$ set $S\subseteq\mathbb{N}$ with finite rank such that
		$$\limsup_{N-M\to\infty}\frac{1}{\vert S\cap [M,N)\vert}\sum_{n\in S\cap [M,N)}\mu(T^{-a_{1}n}A\cap\dots\cap T^{-a_{4}n}A)\geq C\mu(A)^{\ell}.$$
Then for every $m\in\mathbb{N}$, $E\subseteq[N]^{m}$, and sufficiently large $N$, \[
    D_{m,N}(V^m,E)\geq C\beta^md_{m,N}(E)^\ell,
\]
where $\beta>0$ is an explicit constant depending only on $a_1,\dots,a_4$ and $\ell$.
\end{theorem}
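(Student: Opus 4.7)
The plan is to invert the correspondence principle used in \cref{thm:integerdirect}: given $E\subseteq[N]^m$ of density $\alpha:=d_{m,N}(E)$, I construct an ergodic system $(X,\mu,T)$ and a set $A\in\mathcal{B}$ with $\mu(A)=\alpha$, apply the hypothesis of the theorem to this system and $A$, and then translate the resulting Bohr-averaged recurrence bound into a lower bound on the density $D_{m,N}(V^m,E)$.

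The structure of $V$ forces the choice of system. Since $V$ is spanned by $(1,\dots,1)$, $(a_i)_i$, and $(a_i^2)_i$, a torus rotation realizes only the two-dimensional AP part of $V$ and misses the quadratic $(a_i^2)_i$ direction, so I take a two-step nilsystem: the skew-product shift on $X=\mathbb{T}^{m+1}$ given by $T(s,y)=(s+\sigma,\,y+2s\theta+\tau)$ with $\sigma\in\mathbb{T}$ and $\theta,\tau\in\mathbb{T}^m$ chosen transcendental and rationally independent. Then $T^n(s,y)=(s+n\sigma,\,y+2ns\theta+n(n-1)\sigma\theta+n\tau)$, so the $y$-coordinate of $T^{a_i n}(s,y)$ carries an $(a_i n)^2$ term with coefficient $\sigma\theta$. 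Embed $E$ as a grid set $A=\{(s,y)\in X:\lfloor Ny\rfloor\in E\}$, which depends only on $y$ and has $\mu(A)=\alpha$.

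Applying the hypothesis yields a Bohr$_0$ set $S$ of finite rank with
\[
\limsup_{N-M\to\infty}\frac{1}{|S\cap[M,N)|}\sum_{n\in S\cap[M,N)}\mu\Big(\bigcap_{i=1}^4 T^{-a_i n}A\Big)\geq C\alpha^\ell.
\]
Choosing $\sigma,\theta,\tau$ generically to avoid rational relations with the generators of $S$, Weyl equidistribution for two-step nilsystems identifies this limsup with the Lebesgue integral
\[
\int_{\mathbb{T}^{3m}}\prod_{i=1}^4\one_A\big(y+a_i v+a_i^2 w\big)\,dy\,dv\,dw,
\]
which counts quadratic polynomials $P(t)=y+vt+wt^2$ valued in $\mathbb{T}^m$ with $P(a_i)\in A$ for all $i$. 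Unfolding the grid embedding shows this integral equals $\beta^m D_{m,N}(V^m,E)+o(1)$ for an explicit $\beta>0$ depending only on $a_1,\dots,a_4$ (the product across $m$ coordinates of the Jacobian-type factor from the change of variables between the three coefficients $(y,v,w)$ and the four evaluations $(P(a_1),\dots,P(a_4))$), and combining with the hypothesis gives the desired $D_{m,N}(V^m,E)\geq C\beta^m\alpha^\ell$.

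The principal obstacle is constructing a system whose orbit genuinely realizes the full three-dimensional $V$ rather than just the AP sub-subspace, which dictates the two-step nilpotent structure; a secondary technical issue is that the Bohr set $S$ is supplied \emph{after} the system is fixed, so a generic-position argument on the frequencies $\sigma,\theta,\tau$ is needed to guarantee that the Weyl limit reduces cleanly to a Lebesgue integral independent of the Bohr restriction. Once this independence is arranged, the remaining work is the bookkeeping that translates the $\mathbb{T}^{3m}$-integral into a count on $V^m\cap E^4$ with the explicit lattice-to-Lebesgue constant $\beta$.
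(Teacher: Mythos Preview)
Your strategy---encode $E$ into a set $A$ in a two-step skew product, apply the hypothesis, and decode the recurrence bound as a count on $V^m\cap E^4$---is exactly the paper's. However, the system you write down is not well-defined: on $\mathbb{T}^{m+1}$ the term $2s\theta$ with $s\in\mathbb{T}$ and $\theta\notin\Z^m$ depends on the representative of $s$ modulo $\Z$, so $T$ does not descend to the torus. The paper instead takes $X=\mathbb{T}^{2m}$ with $T(\mathbf{x},\mathbf{y})=(\mathbf{x}+\alpha,\,\mathbf{y}+2\mathbf{x}+\alpha)$ for $\alpha\in\mathbb{T}^m$ with rationally independent coordinates; here $2\mathbf{x}$ makes sense because $2\in\Z$, and the $m$-dimensional base $\mathbf{x}$ (rather than a single scalar $s$) is what provides a full $\mathbb{T}^m$ worth of linear coefficients, without which the $\mathbb{T}^{3m}$-integral you aim for cannot arise when $m>1$.

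Your handling of the Bohr set is also circular: you propose choosing the frequencies generically relative to the generators of $S$, but $S$ is produced by the hypothesis only \emph{after} the system is fixed and may depend on it. The paper dissolves this with \cref{lem:eqd1}, which shows that once $\alpha$ has rationally independent coordinates, the sequence $n^2\alpha$ equidistributes on $\mathbb{T}^m$ along \emph{every} Bohr$_0$ set, so no a posteriori adjustment is ever needed. Finally, your ``unfolding'' hides a genuine step: with full $1/N$-boxes, a tuple $(P(a_i))_{i=1}^4\in B^4$ need not round to an integer $4$-tuple lying in $V^m$. The paper shrinks the boxes by a fixed factor $\epsilon$ (determined by the $\ell^\infty$-distance from $\Z^4\setminus V$ to $\overline V$) to force exact membership in $V^m$; this costs $\mu(A)=\epsilon^m d_{m,N}(E)$ and is precisely where the $\ell$-dependence of $\beta$ enters. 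With these three corrections in place your equidistribution picture does recover the paper's upper bound, which the paper obtains more directly by estimating, for each tuple in $V^m\cap E^4$, the measure of the constraint region $\mathbf{y}\in I_1$, $n\mathbf{x}\in I_2$, $n^2\alpha\in I_3$.
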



We provide some examples to illustrate Theorems \ref{thm:integerdirect} and \ref{thm:integerinverse}.
\begin{example}
	Suppose $(a_{1},a_{2},a_{3},a_{4})=(0,2,3,4)$. In this case $V$ is the $\mathbb{Q}$-span of $(1,1,1,1)$, $(0,2,3,4)$ and $(0,4,9,16)$. It follows that
	$$V=\big\{(x,y,z,w)\in\mathbb{Q}^{4}\colon x-6y+8z-3w=0\big\}.$$
	For convenience set $m=1$.
Then $D_{1,N}(V,E)$ is essentially the density of solutions of the equation $x-6y+8z-3w=0$ in $E$, i.e. the proportion of tuples $(x,y,z,w)\in[N]^{4}$ satisfying $x-6y+8z-3w=0$ that belong to $E^4$.
The hypothesis in \cref{thm:integerdirect} can be rephrased informally as saying that this density can be bounded from below by the $\ell$-th power of the density $d_{1,N}(E)$ of the set $E$.
\end{example}

\begin{example}
	Suppose that $a_{1}+a_{2}=a_{3}+a_{4}$. In this case, an elementary computation shows that
	$$V=\big\{(x,y,z,w)\in\mathbb{Q}^{4}\colon s(x-y)+t(z-w)=0\big\},$$
where $s=a_3-a_4$ and $t=a_2-a_1$. We can assume, without loss of generality, that both $s$ and $t$ are positive.

Given $N,m\in\N$ and a set $E\subset[N]^m$, denote by $P(n)$ the number of pairs $(x,z)\in E^2$ satisfying $sx+tz=n$ for each $n\in\N^m$.
Observe that if $(x,y,z,w)\in E^4\cap V^m$ then $sx+tz=sy+tw\in\big[(s+t)N\big]^m$.
We have
$$\sum_{n\in\big[(s+t)N\big]^m}P(n)=|E|^2\qquad\text{ and }\qquad \sum_{n\in \big[(s+t)N\big]^m}P(n)^2=|E^4\cap V^m|.$$
It follows from the Cauchy-Schwarz inequality that $\frac{|E|^4}{N^m(s+t)^m}\leq|E^4\cap V^m|$, which in turn implies that
\begin{equation}\label{eq_easyFrantzikinakis+us}
D_{m,N}(V,E) \geq \beta^md_{m,N}(E)^4
\end{equation}
where $0<\beta<\lim\limits_{N\to\infty}\frac{N^3}{|V\cap[N]^4|(s+t)}$ (it is easy to see that the limit exists and is positive).

By \cref{thm:integerinverse}, the conclusion \eqref{eq_easyFrantzikinakis+us} also follows from \cite[Theorem C]{Frantzikinakis08}.

\end{example}

\subsection{Optimal recurrence along a family of polynomials}

In \cite{Frantzikinakis08}, Frantzikinakis studied in detail optimal recurrence for $(0, p_1(n), p_2(n), p_3(n))$ where $p_1, p_2, p_3 \in \Z[x]$ and dealt with most cases in that regime. However, some stubborn questions remain unanswered.
For instance, it is not known if there exists $\ell>0$ such that for every ergodic system $(X,\mathcal{B},\mu,T)$, $A\in\mathcal{B}$ and $\epsilon>0$, the set
\begin{equation}\label{eq_globpolynomial}
  \big\{n\in\N \setminus\{0\} \colon\mu(A\cap T^{-n}A \cap T^{-2n}A \cap T^{-n^2}A)>\mu(A)^{\ell}-\epsilon\big\}
\end{equation}
is non-empty (let alone syndetic).

On the other hand, if we impose some particular conditions on the system $(X, \mathcal{B}, \mu, T)$, the set \eqref{eq_globpolynomial} with $\ell = 4$ is indeed syndetic.

\begin{proposition}
	\label{proposition:globn2connected}
	Let $(G/\Gamma, \mathcal{B}, \mu, T)$ be an ergodic nilsystem where $G$ is connected (see \cref{section:background} for the definitions).	Then for every $A \in \mathcal{B}$ and $\epsilon > 0$, the set
	\begin{equation}\nonumber
	\big\{n\in\N\colon\mu(A\cap T^{-n}A \cap T^{-2n}A \cap T^{-n^2}A)>\mu(A)^{4}-\epsilon\big\}
	\end{equation}
	is syndetic.
\end{proposition}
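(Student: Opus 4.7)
My plan is to show that the Cesàro average
\[
\bar\mu := \lim_{N\to\infty}\frac{1}{N}\sum_{n=1}^{N}\mu(A\cap T^{-n}A\cap T^{-2n}A\cap T^{-n^{2}}A)
\]
satisfies $\bar\mu \geq \mu(A)^{4}$, and then deduce the syndeticity of the target set from this. The latter step uses the fact that, after approximating $\one_{A}$ in $L^{1}$ by continuous functions, the integrand becomes uniformly close to a polynomial nilsequence in $n$; superlevel sets of such sequences around their Cesàro average are syndetic by the almost-periodicity/equidistribution properties of orbits on nilmanifolds.

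To analyze $\bar\mu$, I would write $T$ as left translation by $a\in G$ and study the orbit closure of the polynomial sequence $n\mapsto(a^{n},a^{2n},a^{n^{2}})$ in $(G/\Gamma)^{3}$. First project to the Kronecker factor $Z=G/([G,G]\Gamma)$, a connected torus since $G$ is connected, and set $\alpha=\pi(a)\in Z$; the projected orbit is $(n\alpha,2n\alpha,n^{2}\alpha)$. Ergodicity of $T$ forces $\alpha$ to be irrational in $Z$ (no non-trivial character of $Z$ vanishes at $\alpha$), so Weyl's theorem shows that $(n\alpha,n^{2}\alpha)$ equidistributes in $Z\times Z$. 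Combined with the identity $2n\alpha=2(n\alpha)$, a character-annihilator computation yields that the joint orbit equidistributes in the subtorus
\[
H = \{(h,2h,h'):h,h'\in Z\} \subset Z^{3}.
\]

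Next I would invoke Leibman's equidistribution theorem for polynomial orbits on connected nilmanifolds, which promotes Kronecker-level equidistribution to equidistribution on the full preimage. Thus $(a^{n},a^{2n},a^{n^{2}})$ equidistributes in $\pi^{-1}(H)\subset(G/\Gamma)^{3}$, and since $H$ imposes no constraint on its third coordinate, this preimage factors as $Y_{1}\times(G/\Gamma)$, where $Y_{1}\subset(G/\Gamma)^{2}$ is the orbit closure of $(a^{n},a^{2n})$. With this product structure, Fubini gives
\[
\bar\mu \;=\; \int_{X}\int_{Y_{1}}\int_{G/\Gamma}\one_{A}(x)\one_{A}(y_{1}x)\one_{A}(y_{2}x)\one_{A}(y_{3}x)\,d\mu(y_{3})\,dm_{Y_{1}}(y_{1},y_{2})\,d\mu(x),
\]
and $G$-invariance of Haar measure yields $\int\one_{A}(y_{3}x)\,d\mu(y_{3})=\mu(A)$. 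The remaining factor equals the Cesàro average of $\mu(A\cap T^{-n}A\cap T^{-2n}A)$, which is at least $\mu(A)^{3}$ by the $k=2$ case of Bergelson--Host--Kra. This gives $\bar\mu\geq\mu(A)^{4}$, as desired.

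The main obstacle is justifying the factorization $\pi^{-1}(H)=Y_{1}\times(G/\Gamma)$ at the level of sub-nilmanifolds carrying their natural Haar measures, together with the claim that the polynomial orbit actually fills out all of $\pi^{-1}(H)$ rather than some smaller sub-nilmanifold. It is here that connectedness of $G$ is used in an essential way, since Leibman's theorem identifies the orbit closure with the preimage of the Kronecker closure precisely in the connected setting; without this, the orbit closure could be a proper sub-nilmanifold of $\pi^{-1}(H)$ admitting no product decomposition, and the decisive factor of $\mu(A)$ from integrating out the quadratic coordinate—the crux of the lower bound—would not be available.
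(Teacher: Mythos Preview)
Your overall strategy---prove the global Ces\`aro bound $\bar\mu\geq\mu(A)^{4}$ and deduce syndeticity via the polynomial nilsequence structure---cannot succeed as written, because the intermediate inequality you aim for is false. The step that fails is the last one: you write that the Ces\`aro average $\bar\mu_{3}=\lim_{N}\tfrac1N\sum_{n}\mu(A\cap T^{-n}A\cap T^{-2n}A)$ is at least $\mu(A)^{3}$ ``by the $k=2$ case of Bergelson--Host--Kra.'' That theorem asserts only that $\{n:\mu(A\cap T^{-n}A\cap T^{-2n}A)>\mu(A)^{3}-\epsilon\}$ is syndetic; it gives no lower bound on the Ces\`aro average, and in fact no such bound holds, even for connected nilsystems.

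For a concrete counterexample inside the class you are treating, take $X=\T$ with rotation by an irrational $\alpha$ (so $G=\R$ is connected). Choose a $3$-AP--free set $\Lambda\subset\{0,\dots,\lfloor N/3\rfloor\}$ of Behrend type with $|\Lambda|>\sqrt{2N}$, and set $A=\bigcup_{j\in\Lambda}[j/N,\,j/N+\delta)$ with $1/(2|\Lambda|^{2})<\delta<1/(3N)$. A short computation shows that every $3$-AP $x,x+t,x+2t$ in $A$ must lie in a single subinterval, so $\bar\mu_{3}=T_{3}(A)=|\Lambda|\,\delta^{2}/2<|\Lambda|^{3}\delta^{3}=\mu(A)^{3}$. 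Since your own factorisation gives $\bar\mu=\mu(A)\cdot\bar\mu_{3}$ here, one gets $\bar\mu<\mu(A)^{4}$, so the inequality you set out to prove is simply not true for this $A$.

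The paper avoids this by never bounding the global average. It exploits the fact that $\{n,2n,n^{2}\}$ has $C$-complexity $1$ on connected nilsystems (so the Kronecker factor $Z_{1}$ is characteristic for the relevant averages), and then averages along the Bohr set $S_{\delta}=\{n:(n\alpha,n^{2}\alpha)\in B(\delta)^{2}\}$ rather than over all of $\N$. Along $S_{\delta}$ the average reduces to one on $Z_{1}$ and, as $\delta\to0$, tends to $\int f^{4}\geq(\int f)^{4}=\mu(A)^{4}$ with $f=\mathbb{E}(\one_{A}\mid Z_{1})$. This yields syndeticity directly, without any claim about the unrestricted Ces\`aro average.
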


\begin{remark*}
In \cref{s5} we formulate and prove a generalization of \cref{proposition:globn2connected} for more general ergodic systems $(X, \mathcal{B}, \mu, T)$ that considers the representation of its 3-step nilfactor (see \cref{section:background} for the definition).
\end{remark*}


We are unable to remove the connectedness assumption. Hence the general question regarding optimal recurrence for $(0, n, 2n, n^2)$ remains open.
However in the next result, we provide an example of lack of optimal recurrence for this family in the case of two commuting transformations (for other results on optimal recurrence for commuting transformations, see also \cite{Chu11,Donoso_Sun2018}).

\begin{proposition}\label{proposition:commutingpoly}
	There exists a  system $(X,\mathcal{B}, \mu,T_1, T_2)$, with $T_1$ ergodic and $T_1T_2=T_2T_1$  such that  for every integer $\ell>0$, there exists $A\in\mathcal{B}$ with $\mu(A)>0$ such that
	\[
	\mu(A\cap T_1^{- n} A \cap T_1^{-2n} A \cap T_2^{-n^2} A) < \mu(A)^\ell
	\]
	for every $n \in \N\setminus \{0\}$.
\end{proposition}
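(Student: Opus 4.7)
The plan is to adapt the Ruzsa-type construction from the appendix of \cite{Bergelson_Host_Kra05} to the commuting two-transformation setting. The commuting structure provides an extra degree of freedom (in the choice of $T_2$) that we use to enforce a combinatorial $3$-term arithmetic progression avoidance condition unavailable in the single-transformation analogue, which is itself an open problem as discussed above.

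Concretely, I would take $X = \T \times \Z/P\Z$ for a large prime $P$ (depending on $\ell$) with the product of Haar and uniform measure, and consider commuting transformations
\[T_1(x,i) = (x+\alpha,\, i+1 \bmod P) \qquad \text{and} \qquad T_2(x,i) = (x+\beta,\, i+k \bmod P),\]
where $\alpha,\beta$ are chosen irrational with appropriate Diophantine properties so that $T_1$ is ergodic on $X$ (as a skew-product over an ergodic rotation with disjoint spectrum), and $k \in \Z$ is fixed. For the set $A = A_\ell$, I would take $A = D \times W$, where $W \subseteq \Z/P\Z$ is a Behrend-type $3$-AP-free set of positive density (so $|W| \geq P e^{-c\sqrt{\log P}}$), and $D \subseteq \T$ is a union of small arcs obtained by lifting a further $3$-AP-free combinatorial object at a finer scale via an irrational rotation.

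The key decomposition is that the conditions defining the 4-point intersection separate into conditions on $i$ (requiring the quadruple $(i,\, i+n,\, i+2n,\, i+kn^2)$ to lie in $W$ modulo $P$) and conditions on $x$ (a 4-point correlation of $D$ on $\T$ with shifts $n\alpha, 2n\alpha, n^2\beta$). The first three entries of the $i$-quadruple already form a 3-AP in $W$, so the 3-AP-freeness of $W$ rules out the intersection whenever $n \not\equiv 0 \pmod P$. For the remaining ``resonant'' values $n = mP$, the $\Z/P\Z$-condition becomes trivial, and the problem reduces to a 4-point correlation on $\T$ of the same type but at a finer scale (governed by $P\alpha$ and $P^2\beta$); this is handled by a similar construction using a scale $N \gg P$ and a further $3$-AP-free set inside $\Z/N\Z$.

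The main obstacle, as in the original Ruzsa construction in \cite{Bergelson_Host_Kra05}, is ensuring that the 4-point intersection is empty (or at least strictly smaller than $\mu(A)^\ell$) for \emph{all} $n \geq 1$ simultaneously, which requires a carefully coordinated choice of the Diophantine parameters $\alpha, \beta$ and of the scales $P, N, \ldots$. In particular, one must verify that for each large or resonant $n$, either the $\Z/P\Z$-constraint on $i$ or the finer-scale constraint on $x$ suffices to rule out the configuration; this Diophantine bookkeeping closely mirrors the technical heart of the BHK appendix and is the most delicate part of the argument.
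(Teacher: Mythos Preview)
Your construction has a structural gap: the system you propose is a \emph{Kronecker} system---both $T_1(x,i)=(x+\alpha,i+1)$ and $T_2(x,i)=(x+\beta,i+k)$ are translations on the compact abelian group $\T\times\Z/P\Z$---and no Kronecker system can furnish this counterexample. Writing $T_1 z=z+g_1$ and $T_2 z=z+g_2$, Weyl's polynomial equidistribution theorem shows (for $\alpha,\beta$ irrational) that for every $\delta>0$ there exist infinitely many $n$ with $ng_1$ and $n^2g_2$ both within $\delta$ of the identity; for such $n$ one has $\mu(A\cap T_1^{-n}A\cap T_1^{-2n}A\cap T_2^{-n^2}A)>\mu(A)-4\delta>\mu(A)^\ell$. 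Your multi-scale scheme cannot evade this: adding further cyclic layers $\Z/N\Z$ keeps the system Kronecker and merely replaces the resonant set $P\Z$ by a sparser arithmetic progression, which still contains such $n$. (Note also that the Ruzsa construction you invoke from the appendix of \cite{Bergelson_Host_Kra05} is carried out on the $2$-step nilsystem $\T^2$ with the skew shift, not on an abelian group---so the ``Diophantine bookkeeping'' you anticipate does not in fact appear there.)

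The paper exploits exactly this $2$-step structure. One takes $X=\T^2$, the ergodic skew shift $T_1(x,y)=(x+\alpha,y+2x+\alpha)$ and the commuting translation $T_2(x,y)=(x,y-2\alpha)$. The second coordinates of $T_1^n(x,y)$, $T_1^{2n}(x,y)$, $T_2^{n^2}(x,y)$ are
\[
u=y+2nx+n^2\alpha,\qquad v=y+4nx+4n^2\alpha,\qquad w=y-2n^2\alpha,
\]
and one checks the exact identity $v-2u+w=0$ for \emph{every} $n$ and every $(x,y)$. With $A=\T\times\bigcup_{a\in\Lambda}\big(\tfrac{a}{N}-\tfrac1{4N},\tfrac{a}{N}+\tfrac1{4N}\big)$ for a Behrend $3$-AP-free set $\Lambda\subset[N]$, this identity forces the three indices in $\Lambda$ to coincide, which pins $x$ to a set of measure $O(1/N)$ and yields $\mu(A\cap T_1^{-n}A\cap T_1^{-2n}A\cap T_2^{-n^2}A)\leq|\Lambda|/N^2$. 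Since $\mu(A)=|\Lambda|/(2N)$ and $|\Lambda|\geq N^{1-\epsilon}$, this is $<\mu(A)^\ell$ for large $N$. The quadratic term $n^2\alpha$ in the skew shift is precisely what makes the $3$-AP relation hold identically in $n$; in your abelian setup no such uniform relation is available, which is why the argument cannot close.
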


Theorems \ref{thm_BadRecurrence}, \ref{thm:integerdirect} and \ref{thm:integerinverse} are proved in \cref{s4}, while Propositions
\ref{proposition:globn2connected} and \ref{proposition:commutingpoly} will be proved in \cref{s5}.

\subsection{Acknowledgment}
We are grateful to Bryna Kra for proposing the questions studied in this paper and providing many important feedback. We thank the Mathematics Research Communities program of the AMS for giving us the optimal environment to start this project and the anonymous referee for helpful suggestions. S. Donoso is supported by Fondecyt Iniciaci\'on Grant 11160061 and J. Moreira is supported by the NSF via grant DMS-1700147.

\section{Background}
\label{section:background}
\subsection{Nilmanifolds, nilsystems and nilsequences}
\label{subsec:nilseq}
Given a group $G$, we denote its lower central series by $G=G_1 \triangleright G_2\triangleright\cdots$, where each term is defined by $G_{i+1}=[G_i,G]$, i.e., $G_{i+1}$ is the subgroup of $G$ generated by all the commutators $[a,b]:=aba^{-1}b^{-1}$ with $a\in G_i$ and $b\in G$.
The group $G$ is a \emph{$k$-step nilpotent group} if $G_{k+1}$ is the trivial group.

Let $G$ be a $k$-step nilpotent Lie group and let $\Gamma$ be a \emph{uniform} (i.e closed and cocompact) subgroup of $G$.
The compact homogeneous space $X := G/\Gamma$ is called a \emph{$k$-step nilmanifold}.
Let $\pi \colon G \rightarrow X$ be the standard quotient map.
We write $1_X = \pi(1_G)$ where $1_G$ is the identity element of $G$.
Denote by $G^0$ the connected component of $G$ containing the identity $1_G$.
If $X$ is connected, then $X = \pi(G^0)$.

The space $X$ is endowed with a unique probability measure that is invariant under translations by $G$. This measure is called the \emph{Haar measure} for $X$, and denoted by $\mu_X$. For every $\tau \in G$, the measure preserving system $(X,\mathcal{B}, \mu_X, T)$ given by $Tx=\tau\cdot x, x\in X$ is called a \emph{$k$-step nilsystem}, where $\mathcal{B}$ is the Borel $\sigma$-algebra of $X$.

Let $C(X)$ denote the set of continuous functions on $X$. For $F \in C(X)$ and $x \in X$, the sequence $\psi(n) := F(T^n x)$ is called a \emph{basic $k$-step nilsequence}. A \emph{$k$-step nilsequence} is a uniform limit of basic $k$-step nilsequences.

We say that a sequence $(x_n)_{n \in \mathbb{N}}$ is \emph{equidistributed} on a nilmanifold $X$ if for every $F \in C(X)$, we have
\[
\lim_{N \to \infty} \frac{1}{N} \sum_{n=1}^N F(x_n) = \int_X F \, d\mu_X.
\]
On the other hand, we say that $(x_n)_{n\in\N}$ is \emph{well distributed} on $X$ if
\[
\lim_{N - M \to \infty} \frac{1}{N - M} \sum_{n = M}^{N-1} F(x_n) = \int_X F \, d \mu_X.
\]
for all $F \in C(X)$.

\subsection{Nilfactors}
\label{sec_nilfactors}

Let $(X,\mathcal{B}, \mu, T)$ be an ergodic measure preserving system, let $k\in\N$ and, for each $j=1,\dots,k$, let $(s_j(n))_{n \in \mathbb{N}}$ be an integer valued sequence and $f_j \in L^{\infty}(\mu)$. Then a factor $(Y, \mathcal{D}, \nu, S)$ of $X$ is said to be \emph{characteristic for the expression $\frac1N\sum_{n=1}^N \prod_{j=1}^k T^{s_j(n)} f_j$} if
\[
\lim_{N \rightarrow \infty} \left( \frac1N\sum_{n=1}^N \prod_{j=1}^k T^{s_j(n)} f_j -  \frac1N\sum_{n=1}^N \prod_{j=1}^k T^{s_j(n)} \mathbb{E}\left(f_j|Y \right) \right) = 0,
\]
where $\mathbb{E}\left(f|Y \right)$ denotes the conditional expectation of $f$ onto $Y$ and the limit is taken in $L^2(\mu)$.
Host and Kra \cite{Host_Kra05} showed that there exists a characteristic factor for $ \frac{1}{N}\sum_{n=1}^N \prod_{j=1}^k T^{jn} f_j$ which is an inverse limit of $(k-1)$-step nilsystems (see also \cite{Ziegler07}). We call this factor the \emph{$(k-1)$-step nilfactor of $X$} and denote it by $Z_{k-1}(X)$ (or $Z_{k-1}$ when the system is clear).

If $(X = G/\Gamma, \mathcal{B}, \mu, T)$ is an ergodic $k$-step nilsystem with $G$ being a $k$-step nilpotent Lie group with the lower central series
\[
    G = G_1 \triangleright G_2 \triangleright \ldots \triangleright G_k \triangleright G_{k+1} = \{1\},
\]
then the $s$-step nilfactor of $X$ is $G/G_{s+1} \Gamma$ and can also be represented as $(G/G_{s+1})/(G_{s+1} \Gamma/G_{s+1}))$ (see \cite[Chapter 11]{Host_Kra18}). Note that if $G$ is connected, then $G/G_{s+1}$ is also connected.

\subsection{Limit formula for multiple averages on nilsystems}
The following description of the limiting distribution of multiple ergodic averages in nilsystems is essentially due to Ziegler \cite{Ziegler05}.

\begin{theorem}
	\label{lemma:ziegler}
	Let $a_1,\dots,a_d\in\Z$ be distinct. Let $(X = G/\Gamma,\mathcal{B}, \mu, T)$ be a $k$-step ergodic nilsystem and let $f_1, f_2, \ldots, f_d \in L^{\infty}(\mu)$.
	For each $i \in \{1,\dots,k\}$, let $\Gamma_i = \Gamma \cap G_i$ and let $\mu_i$ be the Haar measure on $G_i/\Gamma_i$.
	Then for $\mu$-a.e. $x = g \Gamma \in X$,
	\begin{multline}
	\label{eq_ziegleroriginal}
	\displaystyle \lim_{N - M \to \infty} \frac{1}{N-M} \sum_{n=M}^{N-1} f_1 (T^{a_1 n} x) \dots f_d (T^{a_d n} x)  =\\
	\displaystyle \int_{G_1/\Gamma_1} \int_{G_2/\Gamma_2} \ldots \int_{G_k/\Gamma_k} \prod_{i=1}^df_i \left(g g_1^{\binom{a_i}1} \dots g_k^{\binom{a_i}k} \Gamma\right) \, d \mu_k (g_k \Gamma_k) \ldots d \mu_2(g_2 \Gamma_2) d \mu_1 (g_1 \Gamma_1).
	\end{multline}
\end{theorem}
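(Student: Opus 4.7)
My plan is to apply Leibman's equidistribution theorem for polynomial orbits on nilmanifolds to the polynomial sequence
\[
n \;\longmapsto\; \bigl(\tau^{a_1 n}, \tau^{a_2 n}, \ldots, \tau^{a_d n}\bigr) \in G^d,
\]
acting diagonally on the product nilmanifold $X^d=(G/\Gamma)^d$, where $\tau\in G$ is the translation defining $T$. The first ingredient is the Hall--Petresco (Mal'cev) expansion of $\tau^n$: there exist elements $\sigma_j \in G_j$ for $j=1,\dots,k$ with
\[
\tau^n \;=\; \sigma_1^{\binom{n}{1}}\sigma_2^{\binom{n}{2}}\cdots\sigma_k^{\binom{n}{k}} \qquad \text{for all } n \in \Z.
\]
Substituting $n \mapsto a_i n$ shows that our sequence is polynomial of degree $\leq k$ into $G^d$ with respect to the product filtration $G^d \supseteq G_2^d \supseteq \cdots \supseteq G_k^d \supseteq \{1\}$, with the leading coefficients on each level prescribed by the integer matrix $(\binom{a_i}{j})_{i,j}$.

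Next I would invoke Leibman's theorem, which reduces equidistribution of a polynomial orbit on a nilmanifold to equidistribution of its projection onto the maximal horizontal torus $G/G_2\Gamma$. Ergodicity of $T$ says precisely that $(\tau^n\Gamma)$ is equidistributed on this torus; combined with the distinctness of the $a_i$, this implies that the projection of $(\tau^{a_1 n},\dots,\tau^{a_d n})$ to the product torus is equidistributed on the appropriate diagonal-translate sub-torus (no nontrivial horizontal character annihilates the orbit). Leibman's theorem then promotes this horizontal well-distribution to full well-distribution of the polynomial orbit on the sub-nilmanifold
\[
Y_g \;:=\; \Bigl\{\Bigl(g\cdot g_1^{\binom{a_i}{1}}\cdots g_k^{\binom{a_i}{k}}\Gamma\Bigr)_{i=1}^d : (g_1,\dots,g_k)\in G_1\times\cdots\times G_k\Bigr\}\subseteq X^d
\]
with respect to its natural Haar measure.

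The final step is to recognize that the parametrization defining $Y_g$ descends to a well-defined map $\prod_{j=1}^k G_j/\Gamma_j \to Y_g$, and that the product Haar measure $\mu_1\otimes\cdots\otimes\mu_k$ pushes forward to the Haar measure on $Y_g$. Applying the equidistribution statement to the continuous test function $(x_1,\dots,x_d) \mapsto f_1(x_1)\cdots f_d(x_d)$, and using a routine density argument to pass from $C(X)$ to $L^\infty(\mu)$, yields the claimed identity for $\mu$-a.e.\ $x = g\Gamma$.

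The main obstacle is the explicit identification of the orbit closure as $Y_g$ together with its limit measure, which is the technical heart of Ziegler's original analysis. It boils down to showing that the matrix $\bigl(\binom{a_i}{j}\bigr)_{i,j}$, which is essentially Vandermonde, has maximal rank, so that the parameters $g_1,\dots,g_k$ vary genuinely independently in the limit and the orbit closure does not collapse onto a proper sub-nilmanifold of $Y_g$. This rank condition is exactly what fails when the $a_i$ are not distinct, and is what allows the integral on the right-hand side of \eqref{eq_ziegleroriginal} to factor as a product over the levels of the lower central series.
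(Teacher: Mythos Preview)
The paper does not prove this theorem; it quotes it from the literature, attributing it to Ziegler \cite{Ziegler05} (for $G$ connected and simply connected) with the general case handled in \cite[Theorem~5.4]{Bergelson_Host_Kra05} and \cite[Theorem~6.3]{Leibman10b}. So there is no ``paper's own proof'' to compare against; your proposal is an attempt to reconstruct the argument from those references.

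Your outline is broadly the Leibman-style approach and is correct in spirit, but there is a genuine gap between your horizontal-torus step and the conclusion. Leibman's criterion says that a polynomial orbit equidistributes on the \emph{full} nilmanifold $G^d/\Gamma^d$ if and only if its projection to the horizontal torus $(G/G_2\Gamma)^d$ equidistributes there. In your situation the projection lands only on a proper sub-torus, so the criterion in that form is silent. What you actually need is to first exhibit $Y_g$ as $H\cdot(g,\dots,g)\Gamma^d/\Gamma^d$ for an explicit closed subgroup $H\leq G^d$, and then apply Leibman's criterion \emph{intrinsically} to $H$, i.e.\ test every character of the intrinsic abelianization $H/\big([H,H](H\cap\Gamma^d)\big)$. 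This intrinsic horizontal torus is strictly larger than the image of $Y_g$ in $(G/G_2\Gamma)^d$: it picks up extra characters coming from $(H\cap G_j^d)/[H,H]$ for each $j\geq 2$, and ruling those out requires the Vandermonde-type rank computation at \emph{every} level of the lower central series, not just at level~$1$. Your final paragraph correctly flags the orbit-closure identification as the crux, but the middle of the sketch reads as though checking the ambient horizontal torus of $G^d$ suffices, which it does not; this is exactly where the work in \cite{Ziegler05,Bergelson_Host_Kra05,Leibman10b} lies.
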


\begin{remark*}
	\cref{lemma:ziegler} in particular asserts that the right hand side of \eqref{eq_ziegleroriginal} does not depend on the choice of representative $g_i$ for the co-set $g_i\Gamma_i$.

	The statement in \cite[Theorem 1.2]{Ziegler05} requires $G$ to be connected and simply connected.
	These restrictions were removed in \cite[Theorem 5.4]{Bergelson_Host_Kra05}, although in that paper the limit is described in a different (but equivalent) form; see also \cite[Theorem 6.3]{Leibman10b}.
\end{remark*}

Let $(X=G/\Gamma, \mathcal{B}, \mu, T)$ be an ergodic nilsystem with $Tx = \tau \cdot x$ for some $\tau \in G$.
Then its $1$-step nilfactor (also called the \emph{Kronecker factor}) is $(Z_1,\alpha)$ where $Z_1 = G/G_2 \Gamma$ and $\alpha = \tau G_2\Gamma\in Z_1$.
Observe that $Z_1$ is a finite dimensional torus; we will use additive notation for the group operation in $Z_1$.
Define the Bohr$_{0}$ set
\begin{equation}
\label{equ:s}
\begin{split}
    S_{\delta}:=\{n\in\N\colon n \alpha \in B(\delta)\},
\end{split}
\end{equation}
where $B(\delta)$ is the ball in $Z_1$ centered at 0 with radius $\delta$.
By ergodicity (and hence unique ergodicity), the \emph{uniform density} $d(S_{\delta})$ of $S_{\delta}$ is
$$
d(S_{\delta}) := \lim_{N-M\to\infty}\frac{\vert S_{\delta}\cap[M,N)\vert}{N-M}=\mu_{Z_1}(B(\delta)),
$$
where $\mu_{Z_1}$ is the Haar measure on $Z_1$.

We need the following proposition, whose proof for case $d=3$ is sketched in \cite[Page 35]{Frantzikinakis08}.
The proof for general $d$ is similar and included here for completeness.
\begin{proposition}
	\label{prop:sz}
	Assume the hypothesis as in \cref{lemma:ziegler}. For each $\delta > 0$, let $S_{\delta}$ be defined by \eqref{equ:s}.
	Then for $\mu$-almost every $x = g \Gamma \in X$,
	\begin{multline*}
	\lim_{\delta \to 0}\lim_{N-M \to \infty}\frac{1}{| S_{\delta} \cap [M, N)|}
	\sum_{n \in S_{\delta} \cap [M,N)} f_1 (T^{a_1 n} x) \dots f_d (T^{a_d n} x) = \\
	\int_{G_2 /\Gamma_2} \int_{G_2 /\Gamma_2} \int_{G_3 /\Gamma_3} \ldots \int_{G_k/\Gamma_k} \prod_{i=1}^df_i\left(g g_1^{a_i \choose 1} \dots g_k^{a_i \choose k} \Gamma\right)\, d \mu_k(g_k \Gamma_k) \ldots d \mu_2(g_2 \Gamma_1)d \mu_2(g_1 \Gamma_1).
	\end{multline*}
\end{proposition}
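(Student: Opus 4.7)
The plan is to insert the indicator of $S_\delta$ as an extra factor in the Cesàro average, apply \cref{lemma:ziegler} to the resulting $(d+1)$-fold product, and then take $\delta\to 0$ via Lebesgue differentiation on the Kronecker factor $Z_1$. Throughout, write $\pi\colon X\to Z_1$, $g\Gamma\mapsto gG_2\Gamma$, for the factor map.

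Fix a $\mu$-generic point $x = g\Gamma \in X$ and set $z_0 := \pi(x)$. Since $\pi(T^n x) = z_0 + n\alpha$, the indicator $\mathbf{1}_{S_\delta}(n)$ equals $h(T^n x)$, where $h(y) := \mathbf{1}_{B(\delta)}(\pi(y) - z_0)$ lies in $L^\infty(\mu)$. By unique ergodicity of the Kronecker rotation by $\alpha$, $|S_\delta \cap [M,N)|/(N-M) \to \mu_{Z_1}(B(\delta))$, so the sum in the proposition equals $\mu_{Z_1}(B(\delta))^{-1}$ times the Cesàro average of $h(T^n x)\prod_i f_i(T^{a_i n}x)$ on $[M,N)$, up to an $o(1)$ error as $N-M\to\infty$.

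Now apply \cref{lemma:ziegler} to this $(d+1)$-fold product: if $1\notin\{a_1,\ldots,a_d\}$ the exponents $\{1,a_1,\ldots,a_d\}$ are distinct and the lemma applies directly; otherwise, replace the $f_j$ with exponent $a_j=1$ by $f_j\cdot h$ and continue with the original $d$-fold product. Since $\binom{1}{1}=1$ and $\binom{1}{k}=0$ for $k\geq 2$, the inserted factor simplifies to $h(gg_1\Gamma) = \mathbf{1}_{B(\delta)}(g_1G_2\Gamma)$. After dividing by $\mu_{Z_1}(B(\delta))$ and letting $N-M\to\infty$, the expression becomes
\[
I(\delta) := \frac{1}{\mu_{Z_1}(B(\delta))}\int_{G_1/\Gamma_1} \mathbf{1}_{B(\delta)}(g_1G_2\Gamma)\, F(g_1\Gamma)\, d\mu_1(g_1\Gamma_1),
\]
where $F(g_1\Gamma)$ denotes the inner integration over $g_2,\ldots,g_k$ of $\prod_i f_i(gg_1^{\binom{a_i}{1}}\cdots g_k^{\binom{a_i}{k}}\Gamma)$.

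Finally, disintegrating $\mu_1$ along $\pi$ identifies each fiber $\pi^{-1}(z)$ with $G_2/\Gamma_2$ carrying the Haar measure $\mu_2$, so $I(\delta)$ equals the average over $B(\delta)\subseteq Z_1$ of the fiberwise integral $\bar F(z) := \int_{\pi^{-1}(z)} F\, d\mu_2$. Lebesgue differentiation then yields $I(\delta)\to\bar F(0) = \int_{G_2/\Gamma_2} F(g_1\Gamma)\, d\mu_2(g_1\Gamma_2)$, which is the desired right-hand side. The main obstacle is justifying this convergence at the specific point $0\in Z_1$ when the $f_i$ are merely in $L^\infty$: a priori $\bar F$ is defined only $\mu_{Z_1}$-a.e., so $\bar F(0)$ is not well-defined pointwise. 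I resolve this by approximating each $f_i$ in $L^2(\mu)$ by continuous functions, which renders $F$ and $\bar F$ continuous so that Lebesgue differentiation at $0$ is automatic; the $L^2$-approximation errors in the multiple ergodic average are controlled by a Cauchy--Schwarz estimate, uniformly in $N,M,\delta$.
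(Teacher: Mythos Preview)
Your overall strategy matches the paper's: insert the indicator of $S_\delta$ as an extra factor in the Ces\`aro average, apply \cref{lemma:ziegler}, then let $\delta\to0$. However, your execution has a circularity. You write $\mathbf{1}_{S_\delta}(n)=h(T^nx)$ with $h(y)=\mathbf{1}_{B(\delta)}(\pi(y)-z_0)$ and then invoke \cref{lemma:ziegler} for the tuple $(h,f_1,\dots,f_d)$ at the point $x$. But $h$ depends on $z_0=\pi(x)$, and the full-measure set of good points in \cref{lemma:ziegler} depends on the functions; you cannot first fix a ``generic'' $x$ and then apply the lemma to a function manufactured from that same $x$, since $x$ may lie in the exceptional null set for its own $h$. (With uncountably many $z_0$ there is no obvious way to intersect the good sets.)

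The paper sidesteps this by inserting instead $\chi\circ\pi$, where $\chi$ ranges over the characters of $Z_1$. These functions are independent of $x$, and since the dual of the compact abelian Lie group $Z_1$ is countable, a single full-measure set works for all $\chi$ simultaneously. Because $\chi$ is multiplicative, the factor $\chi(\pi(x))$ appears on both sides of the resulting identity and cancels, leaving $\chi(n\alpha)$ on the left and $\chi(\pi(g_1\Gamma))$ on the right; one then approximates the Riemann-integrable function $\mathbf{1}_{B(\delta)}$ by trigonometric polynomials. Your $\delta\to0$ step via Lebesgue differentiation is essentially the same as the paper's observation that the normalized restrictions $\mu_\delta$ of $\mu_X$ to $\pi^{-1}(B(\delta))$ converge weakly to the Haar measure on $G_2/\Gamma_2$, any weak limit being $G_2$-invariant and supported there.

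Your gap is repairable---for instance by invoking directly the equidistribution of the diagonal orbit $(T^{a_1n}x,\dots,T^{a_dn}x,T^nx)$ on a fixed sub-nilmanifold of $X^{d+1}$, which holds for $\mu$-a.e.\ $x$ independently of any test functions, and then noting that the discontinuity set of $h$ is null for the limiting measure---but that is not what \cref{lemma:ziegler} says as stated, and the argument as written does not justify it.
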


\begin{proof}
	Let $\pi\colon X\to Z_1$ be the natural projection.
	For any character $\chi$ of the compact abelian group $Z_1= X/G_2$, the composition $\chi\circ\pi$ is in $L^\infty(\mu)$, and  $\chi\circ\pi(T^nx)=\chi\big(n\alpha+\pi(x)\big)$ for all $n\in\N$ and $x\in X$.
	On the other hand, $\chi\circ\pi(gh\Gamma)=\chi\circ\pi(g\Gamma)$ whenever $h\in G_2$.
	By \cref{lemma:ziegler}, for $\mu$-almost every $x = g \Gamma \in X$, we have
	\begin{multline}
	\label{equ:ZL3}
	\lim_{N-M \to \infty} \frac{1}{N-M} \sum_{n=M}^{N-1}  \chi\big(n \alpha + \pi(x)\big) \prod_{i=1}^d f_i(T^{a_i n} x)
	=\\
	\int_{G_1/\Gamma_1} \ldots \int_{G_k/\Gamma_k} \chi\big(\pi(g g_1 \Gamma)\big)  \prod_{i=1}^d f_i\left(g g_1^{a_i \choose 1} \dots g_k^{a_i \choose k} \Gamma\right) \, d \mu_k(g_k \Gamma_k) \ldots d \mu_1(g_1 \Gamma_1).
	\end{multline}
	
	As $\chi$ is a character of $Z$, we have $\chi\big(n \alpha + \pi(x)\big) = \chi(n \alpha) \chi\big(\pi(x)\big)$, and $\chi\big(\pi(g g_1 \Gamma)\big) = \chi\big(\pi(g \Gamma) \big)\chi\big(\pi(g_1 \Gamma)\big)$.
Recall that $x = g\Gamma$.
After canceling $\chi\big(\pi(x)\big)$ from both sides of \eqref{equ:ZL3}, we get:
	\begin{multline}
	\label{equ:ZL4}
	\lim_{N-M \to \infty} \frac{1}{N-M} \sum_{n=M}^{N-1}  \chi(n \alpha)  \prod_{i=1}^d f_i(T^{a_i n} x)
	= \\
	\int_{G_1/\Gamma_1} \ldots \int_{G_k/\Gamma_k}\chi\big(\pi(g_1\Gamma) \big)\prod_{i=1}^d f_i\left(g g_1^{a_i \choose 1} \dots g_k^{a_i \choose k} \Gamma\right) \, d \mu_k(g_k \Gamma_k) \ldots d \mu_1(g_1 \Gamma_1).
	\end{multline}
	
	We can approximate the Riemann integrable function $\mathbbm{1}_{B(\delta)}$ by finite linear combinations of characters, and so we can replace $\chi$ in \eqref{equ:ZL4} with $\one_{B(\delta)}$ to get:
	\begin{multline}
	\label{equ:ZL5}
	\lim_{N-M \to \infty} \frac{1}{N-M} \sum_{n=M}^{N-1}  \mathbbm{1}_{B(\delta)} (n \alpha)  \prod_{i=1}^d f_i(T^{a_i n} x)
	= \\
	\int_{G_1/\Gamma_1} \ldots \int_{G_k/\Gamma_k} \mathbbm{1}_{B(\delta)}\big(\pi(g_1\Gamma) \big)\prod_{i=1}^d f_i\left(g g_1^{a_i \choose 1} \dots g_k^{a_i \choose k} \Gamma\right) \, d \mu_k(g_k \Gamma_k) \ldots d \mu_1(g_1 \Gamma_1).
	\end{multline}
	
	The left hand side of \eqref{equ:ZL5} is equal to:
	\begin{equation*}
	m_Z\big(B(\delta)\big)\lim_{N-M \to \infty} \frac{1}{| S_{\delta} \cap [M, N)|} \sum_{n \in S_{\delta} \cap [M,N)} \prod_{i=1}^d f_i(T^{a_i n} x).
	\end{equation*}
	
	On the other hand, the right hand side of \eqref{equ:ZL5} is equal to:
	\begin{equation*}
	\int_{\pi^{-1}(B(\delta))} \int_{G_2/\Gamma_2}\ldots \int_{G_k/\Gamma_k} \prod_{i=1}^d f_i\left(g g_1^{a_i \choose 1} \dots g_k^{a_i \choose k} \Gamma\right)  \, d \mu_k(g_k \Gamma_k) \ldots d \mu_1(g_1 \Gamma_1).
	\end{equation*}
	Let $\mu_\delta$ be the probability measure on $X$ defined by
	$$\int_X f\,d\mu_\delta=\frac1{\mu_{Z_1}(B(\delta))}\int_{\pi^{-1}(B(\delta))}f\,d\mu_X\qquad\forall f\in C(X).$$
	Since $\mu_X$ is invariant under the action of $G$ (and hence of $G_2$) and the set $\pi^{-1}(B(\delta))$ is invariant under $G_2$, we have that $\mu_\delta$ is invariant under the action of $G_2$.
	Moreover, any limit point of $\{\mu_\delta\colon \delta>0\}$ is supported on $G_2/\Gamma_2$.
	This shows that $\mu_\delta\to\mu_{G_2/\Gamma_2}$ as $\delta\to0$, where $\mu_{G_2/\Gamma_2}$ is the Haar measure on $G_2/\Gamma_2$.
	
	Therefore, dividing both sides of \eqref{equ:ZL5} by $\mu_{Z_1}(B(\delta))$ and taking the limit as $\delta\to0$, we obtain the desired conclusion.
\end{proof}

\subsection{Characteristic factors along \texorpdfstring{Bohr$_{\mathbf{0}}$}{Bohr0} sets}

We also need the following proposition whose proof is sketched in \cite[Page 34]{Frantzikinakis08}.

\begin{proposition}
	\label{proposition:characteristic-factor-along-bohr-set}
	Let $(X, \mathcal{B}, \mu, T)$ be an ergodic system, denote by $Z_1$ its Kronecker factor, let $\pi:X\to Z_1$ be the factor map and let $\alpha\in Z_1$ be such that $\pi(Tx)=\pi(x)+\alpha$ for every $x\in X$.
Let $\delta > 0$ and define $S_{\delta}$ as in \eqref{equ:s}.
Let $a_1, a_2, a_3 \in \mathbb{\N}$ be distinct and $f_1, f_2, f_3 \in L^{\infty}(\mu)$. Furthermore, suppose that $\mathbb{E}(f_i | Z_2) = 0$ for some $1 \leq i \leq 3$. Then
	\begin{equation}
	\label{equation:char-bohr-1}
	\lim_{N - M \to \infty} \frac{1}{|S_{\delta} \cap [M, N)|} \sum_{n \in S_{\delta} \cap [M,N)} f_1(T^{a_1 n} x) f_2(T^{a_2 n}x ) f_3(T^{a_3 n}x) = 0,
	\end{equation}
	where the limit is taken in $L^{2}(\mu)$.
\end{proposition}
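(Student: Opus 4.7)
The plan is to prove that the Host--Kra factor $Z_2$ remains characteristic for the Bohr-weighted three-term average in $L^2(\mu)$; the proposition then follows directly from the hypothesis $\mathbb{E}(f_i|Z_2)=0$. The first step is to Fourier-expand $\mathbf{1}_{B(\delta)}$ on the compact abelian group $Z_1$ as $\sum_{\chi\in\widehat{Z}_1}c_\chi\chi$ in $L^2(Z_1)$ and truncate at a finite level $K$; the remainder $R_K$ has $\|R_K\|_{L^2(Z_1)}\to 0$ as $K\to\infty$, and Cauchy--Schwarz in $n$ combined with unique ergodicity of the translation by $\alpha$ on $Z_1$ and the $L^\infty$ bounds on the $f_j$ shows that the $L^2(\mu)$-contribution of $R_K(n\alpha)$ to the Bohr average is bounded by $\|R_K\|_{L^2(Z_1)}\cdot\prod_j\|f_j\|_\infty$. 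It therefore suffices to prove, for every single character $\chi\in\widehat{Z}_1$, that
\[
    \lim_{N-M\to\infty}\frac{1}{N-M}\sum_{n=M}^{N-1}\chi(n\alpha)\prod_{j=1}^{3}f_j(T^{a_jn}x)=0\ \text{ in }\ L^2(\mu).
\]

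The second step is to absorb the character into the three factors. Assuming $\gcd(a_1,a_2,a_3)=1$, choose integers $c_1,c_2,c_3$ with $c_1a_1+c_2a_2+c_3a_3=1$ via B\'ezout, and set $g_j:=(\chi^{c_j})\circ\pi\in L^\infty(\mu)$. Since $\pi\circ T^{a_j}=\pi+a_j\alpha$ and $\chi$ is a character, one computes $g_j(T^{a_jn}x)=g_j(x)\chi(n\alpha)^{a_jc_j}$, and multiplying across $j$ yields $\prod_j g_j(T^{a_jn}x)=\chi(n\alpha)\prod_j g_j(x)$. Substituting gives
\[
    \chi(n\alpha)\prod_{j=1}^{3}f_j(T^{a_jn}x)=\Big(\prod_{j=1}^{3}\overline{g_j(x)}\Big)\prod_{j=1}^{3}(g_jf_j)(T^{a_jn}x),
\]
which reduces the $\chi$-weighted average to an unweighted three-term average of the modified functions $g_jf_j$.

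The third step invokes the Host--Kra structure theorem, in the form extended to distinct integer coefficients, which asserts that $Z_2$ is characteristic in $L^2(\mu)$ for $\frac{1}{N-M}\sum_n\prod_j T^{a_jn}(g_jf_j)$. Each $g_j$ is $Z_1$-measurable and hence $Z_2$-measurable, so $\mathbb{E}(g_if_i|Z_2)=g_i\,\mathbb{E}(f_i|Z_2)=0$ for the index $i$ from the hypothesis, and the unweighted three-term average tends to zero in $L^2(\mu)$, completing the proof.

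The main technical obstacle is the B\'ezout step: a character $\chi$ not lying in $\gcd(a_1,a_2,a_3)\cdot\widehat{Z}_1$ cannot be decomposed as $\sum_j a_j\chi_j$ inside $\widehat{Z}_1$. I would remove this obstruction by first replacing $T$ with $T^d$, where $d=\gcd(a_1,a_2,a_3)$, and running the argument on each of the finitely many $T^d$-ergodic components of $\mu$, on which the effective coefficients $a_j/d$ are coprime and B\'ezout applies without restriction; the $T$-cyclic structure between the components then allows one to recombine into the conclusion for $T$. A subsidiary concern is justifying the passage from $L^2(Z_1)$ to $L^2(\mu)$ error in the Fourier truncation, but this is routine given unique ergodicity on $Z_1$ together with the $L^\infty$ bounds on the $f_j$.
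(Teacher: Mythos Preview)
Your argument in the coprime case $\gcd(a_1,a_2,a_3)=1$ is correct and takes a genuinely different route from the paper. You absorb the character weight directly into the three functions via a B\'ezout decomposition $g_j=\chi^{c_j}\circ\pi$, staying entirely within $X$ and invoking only that $Z_2$ is characteristic for unweighted three-term linear averages. The paper instead writes $\chi(n\alpha)=\overline{\chi(\pi(x))}\cdot(\chi\circ\pi)(T^nx)$, treats the result as a \emph{four}-term average (first reducing to the $3$-step nilfactor $Z_3$), and then absorbs the character by enlarging to the product system $X\times Z_1$ with rotation $\alpha/a_2$ on the second factor; this requires $Z_1$ to be divisible, i.e.\ connected, which is arranged by first reducing to the totally ergodic case via the nilsystem structure. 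Your route is more elementary in that it avoids both the passage through $Z_3$ and the product-system construction.

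However, your proposed fix for $d=\gcd(a_1,a_2,a_3)>1$ has a genuine gap. Passing to $T^d$ makes the effective coefficients $b_j=a_j/d$ coprime, but the weight to be absorbed is still $\chi(\alpha)^n$, and $\chi(\alpha)$ is a $T$-eigenvalue that need not lie in the eigenvalue group of $T^d$ on any ergodic component. Concretely, take $Z_1=\mathbb{T}$ with $\alpha$ irrational (so $T$ is totally ergodic and $T^d$ remains ergodic on all of $X$), and let $\chi$ be the generating character; the eigenvalues of $T^d$ are $\{e^{2\pi i kd\alpha}:k\in\Z\}$, which does not contain $\chi(\alpha)=e^{2\pi i\alpha}$. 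Hence no choice of $T^d$-eigenfunctions $g_j$ can satisfy $\prod_j g_j((T^d)^{b_jn}x)=\chi(\alpha)^n\prod_j g_j(x)$: B\'ezout applied to the $b_j$ only lets you absorb $\chi(dn\alpha)$, leaving a residual oscillating weight $\chi((1-d)n\alpha)$. The paper's mechanism---dividing $\alpha$ by $a_2$ inside a connected $Z_1$ after reducing to a totally ergodic nilsystem---is precisely what circumvents this obstruction; your argument would be completed by borrowing that step, but as written the $T^d$ reduction does not close the gap.
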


\begin{proof}
	Without loss of generality, we assume $\mathbb{E}(f_1|Z_2) = 0$.
	Let $L$ be the limit on the left hand side of (\ref{equation:char-bohr-1}) and $d(S_{\delta})$ be the Banach density of $S_{\delta}$. Then
	\begin{multline}
	d(S_{\delta}) L = \lim_{N - M \to \infty} \frac{1}{N-M} \sum_{n=M}^{N-1} \mathbbm{1}_{S_{\delta}}(n) f_1(T^{a_1 n}x) f_2(T^{a_2 n} x) f_3(T^{a_3 n}x) = \\
	\lim_{N - M \to \infty} \frac{1}{N-M}\sum_{n=M}^{N-1} \mathbbm{1}_{B(\delta)}(n \alpha) f_1(T^{a_1 n}x) f_2(T^{a_2 n} x) f_3(T^{a_3 n}x).
	\end{multline}
	Approximating the Riemann integrable function $\mathbbm{1}_{B(\delta)}$ by linear combinations of characters, it suffices to show
	\begin{equation}
	\label{equation:char-bohr-1.5}
	\lim_{N - M \to \infty} \frac{1}{N-M}\sum_{n=M}^{N-1} \chi (n \alpha) f_1(T^{a_1 n}x) f_2(T^{a_2 n} x) f_3(T^{a_3 n}x) = 0
	\end{equation}
	for all character $\chi$ of $Z_1$. Note that the limit in the left hand side of (\ref{equation:char-bohr-1.5}) 
	is equal to
	\begin{equation}
	\bar{\chi}\big(\pi(x)\big) \lim_{N - M \to \infty} \frac{1}{N-M}\sum_{n=M}^{N-1} \chi\circ\pi\big(T^nx\big) f_1(T^{a_1 n}x) f_2(T^{a_2 n} x) f_3(T^{a_3 n}x).
	\end{equation}
	By \cite[Theorem 1.1 and 12.1]{Host_Kra05}, the above limit exists in $L^2(\mu)$ and does not change if we replace $f_i$ by $\mathbb{E}(f_i|Z_3)$.	
	Therefore, by approximation, we can assume that $(X, \mathcal{B},\mu, T)$ is a $3$-step nilsystem.
	
	 First suppose that $(X,\mathcal{B}, \mu, T)$ is totally ergodic.
Then the Kronecker factor $Z_1$ is connected and hence there exists $g \in Z_1$ such that $a_2 g = \alpha$.
Let $\alpha/a_2$ denote that element.
Consider the system $Y = (X \times Z_1, \mu \times\mu_{Z_1}, T \times \alpha/a_2)$.
Since $\mathbb{E}(f_1|Z_2(X)) = 0$, for almost every ergodic component $Y_t$ of $Y$, we have $\mathbb{E}(f_1 \otimes 1|Z_2(Y_t)) = 0$ (one way to verify this is to show that $\lVert f_1 \otimes 1 \rVert_{3} = 0$, where $\lVert \cdot \rVert_k$ is the Host-Kra's seminorm defined in \cite{Host_Kra05}). Hence by \cite[Theorem 12.1]{Host_Kra05},
	\begin{equation}
	\label{equation:char-bohr-23.1}
	    \lim_{N-M \to \infty} \frac{1}{N-M} \sum_{n=M}^{N-1} (T \times \alpha/a_2)^{a_1 n} f_1 \otimes 1 \cdot (T \times \alpha/a_2)^{a_2 n} f_2 \otimes \chi \cdot (T \times \alpha/a_2)^{a_3 n} f_3 \otimes 1 = 0,
	\end{equation}
	where the limit is taken in $L^2(\mu \times\mu_{Z_1})$. Rewriting the left hand side of (\ref{equation:char-bohr-23.1}), we get
	\begin{multline}
	\label{equation:char-bohr-23.2}
	    \lim_{N-M \to \infty} \frac{1}{N-M} \sum_{n=M}^{N-1} \chi(n \alpha + y) f_1(T^{a_1 n} x) f_2(T^{a_2 n} x) f_3(T^{a_3 n}x) =\\
	    \chi(y) \lim_{N-M \to \infty} \frac{1}{N-M} \sum_{n=M}^{N-1} \chi(n \alpha) f_1(T^{a_1 n} x) f_2(T^{a_2 n} x) f_3(T^{a_3 n} x) = 0.
	\end{multline}
Since $\chi(y) \neq 0$ for all $y \in G$, (\ref{equation:char-bohr-23.2}) implies (\ref{equation:char-bohr-1.5}).
	
	We now return to general situation without the total ergodicity assumption.
	Let $k$ be the number of connected components of $X$.
	Since $(X,\mathcal{B}, \mu, T)$ is ergodic, $(X, \mathcal{B},\mu, T^k)$ is totally ergodic.
	For all $0 \leq i \leq k-1$, applying the above argument with $T^k, T^{a_1 i} f_1, T^{a_2 i} f_2, T^{a_3 i} f_3$ replacing $T, f_1, f_2, f_3$, respectively, we get
	\[
	    \lim_{N - M \to \infty} \frac{1}{N-M}\sum_{n=M}^{N-1} \chi((kn + i) \alpha) f_1(T^{a_1(kn+i)} x) f_2(T^{a_2(kn+i)} x) f_3(T^{a_3(kn+i)} x) = 0
	\]
	for all character $\chi$ of $Z_1$.
	Taking the average over all $0 \leq i \leq k-1$, we derive (\ref{equation:char-bohr-1.5}). This finishes the proof.
\end{proof}

\section{Optimal recurrence along \texorpdfstring{$\{f(n), 2 f(n), \ldots, k f(n)\}$}{(f(n), 2f(n),..., kf(n))}}
\label{s3}

\subsection{Optimal recurrence along the sequence of shifted primes}
\label{section:positive-results}
We begin this section by recalling the following definition introduced in \cite{Frantzikinakis08}.
\begin{definition}
	A family of polynomials $\{g_1, g_2, g_3\}$ with $g_i \in \mathbb{Z}[x]$ for $i = 1, 2, 3$ is said to be of \emph{type $(e_1)$, $(e_2)$ or $(e_3)$} if some of its  permutation has the form
	\begin{enumerate}
		\item[$(e_1)$] $\{l q, mq, rq\}$ with $0 \leq l < m < r$ and $l + m \neq r$.
		\item[$(e_2)$] $\{lq, mq, kq^2 + rq\}$
		\item[$(e_3)$] $\{kq^2 + lq, kq^2 + mq, kq^2 + rq\}$
	\end{enumerate} for some $q \in \mathbb{Q}[x]$ and constants $k, l, m, r \in \mathbb{Z}$ with $k \neq 0$.
\end{definition}

\begin{theorem}
	\label{theorem:glob-primes}
	Let $(p_n)_{n \in \mathbb{N}}$ be the increasing enumeration of the primes. Let $(X, \mathcal{B}, \mu, T)$ be an ergodic measure preserving system, $A \in \mathcal{B}$ and $\epsilon > 0$. Suppose $g_1, g_2, g_3 \in \Z[x]$ with $g_i(0) = 0$ for $i = 1, 2, 3$. Then the sets
	\begin{equation*}
	\{n \in \mathbb{N}\colon \mu(A \cap T^{-g_1(p_n -1)} A) > \mu(A)^2 - \epsilon\}
	\end{equation*}
	and
	\begin{equation*}
	\{n \in \mathbb{N}\colon \mu(A \cap T^{-g_1(p_n -1)} A \cap T^{-g_2(p_n - 1)} A) > \mu(A)^3 - \epsilon\}
	\end{equation*}
	have positive lower density. Moreover, the set
	\begin{equation}
	\label{equation:glob-prime-3}
	\{n \in \mathbb{N}\colon \mu(A \cap T^{-g_1(p_n -1)} A \cap T^{-g_2(p_n - 1)} A \cap T^{-g_3(p_n -1)} A) > \mu(A)^4 - \epsilon\}
	\end{equation}
	also has positive lower density
	unless $g_1, g_2, g_3$ are pairwise distinct and $\{g_1, g_2, g_3\}$ is of type $(e_1), (e_2)$ or $(e_3)$.
	
	The same is true if $p_n - 1$ is replaced by $p_n + 1$.
\end{theorem}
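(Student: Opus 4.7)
The plan is to deduce Theorem~\ref{theorem:glob-primes} from the corresponding optimal recurrence statements along all integers by a transference from primes to integers operating on the characteristic factor of the multiple ergodic averages. The same strategy handles the three cases $k=1,2,3$ uniformly.

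First, I would invoke the characteristic factor theory for prime averages: by \cite{Frantzikinakis_Host_Kra07, Frantzikinakis_Host_Kra13, Wooley_Ziegler12}, the $L^2$-limit of $\frac{1}{N}\sum_{n=1}^N \prod_{i=1}^k T^{g_i(p_n-1)} f_i$ depends only on the projections of $f_1,\dots,f_k$ to a suitable nilfactor $Z_d(X)$, reducing the problem to the case where $(X,\mathcal B,\mu,T)$ is an inverse limit of nilsystems. On such systems, the $W$-trick (again from the cited works) gives the identity
\begin{equation*}
\lim_{N\to\infty}\frac{1}{N}\sum_{n=1}^N \prod_{i=1}^k T^{g_i(p_n-1)}f_i = \lim_{W\to\infty}\frac{1}{\phi(W)}\sum_{b\in(\Z/W\Z)^*}\lim_{N\to\infty}\frac{1}{N}\sum_{n=1}^N \prod_{i=1}^k T^{g_i(Wn+b-1)}f_i
\end{equation*}
in $L^2(\mu)$.

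For each fixed $W$ and each $b$ coprime to $W$, the polynomials $n\mapsto g_i(Wn+b-1)$ form a family to which I would apply Khintchine's theorem (for $k=1$), \cite{Bergelson_Host_Kra05} (for $k=2$), or Theorem~C of \cite{Frantzikinakis08} (for $k=3$, the last requiring that the substituted family is not of type $(e_1),(e_2),(e_3)$). I would verify this preservation of type by an elementary combinatorial check on polynomial degrees and leading coefficients: the substitution $n\mapsto Wn+b-1$ is affine, so it respects the degree structure, and the hypothesis $g_i(0)=0$ rules out new obstructions arising from the constant term $g_i(b-1)$. Combining these inputs yields a syndetic set of $n$ (for each $W,b$) where $\mu(A\cap T^{-g_1(Wn+b-1)}A\cap\cdots\cap T^{-g_k(Wn+b-1)}A)>\mu(A)^{k+1}-\epsilon$. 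Averaging over $b\in(\Z/W\Z)^*$ and letting $W\to\infty$ transfers this to positive lower density for the set along primes. Cases where two of the $g_i$ coincide reduce trivially to smaller $k$.

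The main obstacle is the $W$-tricked identity above. Its underlying analytic ingredient, the Gowers-norm control of the von Mangoldt function on nilmanifolds due to Green--Tao--Ziegler, is essentially contained in \cite{Frantzikinakis_Host_Kra07,Wooley_Ziegler12}, but combining it with the polynomial structure of the $g_i$ and with Frantzikinakis's delicate classification of type-$(e_j)$ families requires careful bookkeeping. The verification that the type is preserved under the substitution $n\mapsto Wn+b-1$ is where the hypothesis $g_i(0)=0$ enters crucially; the analogous argument for $p_n+1$ uses the substitution $n\mapsto Wn+b+1$.
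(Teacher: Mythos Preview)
Your route via the $W$-trick is different from the paper's, and as sketched it has two real gaps. First, Frantzikinakis's Theorem~C requires zero constant terms, but $h_i(n)=g_i(Wn+b-1)$ has $h_i(0)=g_i(b-1)$, which is typically nonzero for $b\neq 1$ and differs across $i$, so the constants cannot be absorbed by a common shift; your claim that $g_i(0)=0$ ``rules out new obstructions arising from the constant term $g_i(b-1)$'' is not correct. Second, and more seriously, the $W$-trick is an identity of \emph{averages}, whereas the theorem concerns the \emph{level set} of $\phi(m)=\mu\big(A\cap T^{-g_1(m)}A\cap\cdots\big)$. Even granting that for each $(W,b)$ the set of $n$ with $\phi(Wn+b-1)>\mu(A)^{k+1}-\epsilon$ is syndetic, you have provided no mechanism to produce primes $p$ with $\phi(p-1)>\mu(A)^{k+1}-\epsilon$: an abstract syndetic set carries no structure forcing shifted primes to land in it with positive density, and ``averaging over $b$ and letting $W\to\infty$'' transfers averages, not level-set membership.

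The paper supplies exactly the missing structural input, and does not use the $W$-trick at all. It applies Leibman's decomposition $\phi=\psi+\delta$ (nilsequence plus null in uniform density) together with the results of \cite{Le17} that $\delta(p_n-1)$ is also null in density and that $(\tau^{p_n-1}1_Y)$ equidistributes on the identity component $Y_0$ of the nilmanifold $Y$ carrying~$\psi$. Theorem~C is invoked only for the family $n\mapsto g_i(dn)$, where $d$ is the number of components of $Y$; these polynomials \emph{do} have zero constant term since $g_i(0)=0$, and the theorem is used merely to locate a single point of $Y_0$ at which the continuous approximation $F$ to $\psi$ exceeds $\mu(A)^4-\epsilon/2$. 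Continuity then gives an open $U\subset Y_0$ on which $F>\mu(A)^4-3\epsilon/4$, equidistribution of the shifted primes on $Y_0$ yields positive density of $n$ with $F(\tau^{p_n-1}1_Y)$ large, and the nullity of $\delta(p_n-1)$ upgrades this to $\phi(p_n-1)>\mu(A)^4-\epsilon$.
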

\begin{proof}
	We only prove that the set in (\ref{equation:glob-prime-3}) has positive density as the proofs for the other two sets are similar.
	Fix $\epsilon > 0$ and  assume that the family $\{g_1, g_2, g_3\}$ is not of type $(e_1), (e_2)$ nor $(e_3)$. Denote
	\[
	\phi(n) = \mu(A \cap T^{-g_1(n)} A \cap T^{-g_2(n)} A \cap T^{-g_3(n)} A)
	\]
	for $n \in \mathbb{N}$.
	
	By \cite[Theorem 4.1]{Leibman10}, the sequence $(\phi(n))_{n \in \N}$ can be decomposed as $\phi(n) = \psi(n) + \delta(n)$, where $(\psi(n))_{n \in \N}$ is a nilsequence and
	\begin{equation}
	\label{equation:delta-is-null}
	\lim_{N-M \to \infty} \frac{1}{N-M} \sum_{n=M}^{N-1} |\delta(n)|=0.
	\end{equation}
	By \cite[Theorem 1.1]{Le17}, we also have
	\begin{equation}
	\label{equation:null-along-prime-minus-1}
	\lim_{N \to \infty} \frac{1}{N} \sum_{n=1}^N |\delta(p_n-1)| = 0.
	\end{equation}
	
	Since a nilsequence is a uniform limit of basic nilsequences, there exists a basic nilsequence $(F(\tau^n 1_Y))_{n \in \N}$ such that $|\psi(n) - F(\tau^n 1_Y)| < \epsilon/4$ for all $n \in \mathbb{N}$.
Here $F$ is a continuous function on a nilmanifold $Y = G/\Gamma$, $\tau \in G$ act ergodically on $Y$, and $1_Y = \Gamma \in Y$.
Assume that $Y$ has $d$ connected components and $Y_0$ is the component containing $1_Y$.
Then it follows that $\tau^{dn} 1_Y \in Y_0$ for all $n\in\N$.
	Since the family $\{g_1, g_2, g_3\}$ is not of the types $(e_1), (e_2)$, $(e_3)$, the family $\{h_1, h_2, h_3\}$ with $h_i(n) = g_i(dn)$ is also not of these types.
Hence by \cite[Theorem C]{Frantzikinakis08},  the set $S = \{n \in \mathbb{N}\colon \phi(dn) > \mu(A)^4 - \epsilon/4\}$ is syndetic. Together with (\ref{equation:delta-is-null}), we get
	\[
	\lim_{N \to \infty} \frac{1}{|[N]\cap S|} \sum_{n \in [N] \cap S} |\delta(dn)| = 0,
	\]
	which implies
	\[
	\limsup_{N \to \infty} \frac{1}{|[N] \cap S|} \sum_{n \in [N] \cap S} |\phi(dn) - F(\tau^{dn} 1_Y)| < \epsilon/4.
	\]
	We deduce that there exists an $n \in \N$ such that $F(\tau^{dn} 1_Y) > \mu(A)^4 - \epsilon/2$.
	
	Since $\tau^{dn}1_Y \in Y_0$ and $F$ is continuous, there is an open subset $U$ of $Y_0$ such that $F > \mu(A)^4 - 3 \epsilon/4$ on $U$.
	By \cite[Corollary 1.4]{Le17}, the sequence $(\tau^{p_n -1} 1_Y)_{n \in \N}$ is equidistributed on $Y_0$ when restricted to $p_n \equiv 1 \mod d$.
	Hence the set $R:=\{n\in\N\colon \tau^{p_n-1} 1_Y \in U\}$ has positive density, and for every $n\in R$ we have $F(\tau^{p_n -1} 1_Y) > \mu(A)^4 - 3\epsilon/4$.
	On the other hand, from \eqref{equation:null-along-prime-minus-1} it follows that the set $R':=\{n\in R\colon \phi(p_n-1)<\mu(A)^4-\epsilon\}$ has $0$ density. Therefore the set $R\setminus R'$ has positive density and is contained in the set \eqref{equation:glob-prime-3}. This finishes our proof.
\end{proof}

\cref{thm_primesOR} now follows from \cref{theorem:glob-primes} by letting $g_2(x) = 2 g_1(x)$, $g_3(x) = 3 g_1(x)$ and
\[
    g_1(x) = \begin{cases}
        f(x + 1) \mbox{ if } f(1) = 0 \\
        f(x - 1) \mbox{ if } f(-1) = 0
    \end{cases}
\]
for all $x \in \Z$.

\medbreak

\subsection{Optimal recurrence along Hardy field sequences}\label{sec_hardy}

We prove a slight generalization of \cref{theorem:glob-Hardy}.

\begin{theorem}\label{theorem:glob-Hardyfull}
    Let $f \in\mathcal{H}$ polynomial growth and satisfying $\big|f(x)-cp(x)\big|/\log(x)\to\infty$ for every $c\in\R$, $p\in\Z[x]$.
	Let $(X,{\mathcal B},\mu,T)$ be an ergodic measure preserving system, $A \in{\mathcal B}$ and $\epsilon>0$. Let $0 \leq a_1 \leq a_2 \leq a_3 \in \mathbb{Z}$.
    Then the sets
    \begin{equation*}
	\{n \in \mathbb{N}\colon \mu(A \cap T^{-a_1 \lfloor f(n) \rfloor} A) > \mu(A)^2 - \epsilon\}
	\end{equation*}
	and
	\begin{equation*}
	\{n \in \mathbb{N}\colon  \mu(A \cap T^{-a_1 \lfloor f(n) \rfloor} A \cap T^{-a_2 \lfloor f(n) \rfloor} A) > \mu(A)^3 - \epsilon\}
	\end{equation*}
	have positive lower density. If $a_3 = a_1 + a_2$ then the set
	\begin{equation}
	\label{eq:july-15-1}
	\{n \in \mathbb{N}\colon \mu(A \cap T^{-a_1 \lfloor f(n) \rfloor} A \cap T^{-a_2 \lfloor f(n) \rfloor} A \cap T^{-a_3 \lfloor f(n) \rfloor} A) > \mu(A)^4 - \epsilon\}
	\end{equation}
	also has positive lower density.
\end{theorem}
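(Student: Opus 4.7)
The proof will follow the same template as that of \cref{theorem:glob-primes}, with Frantzikinakis' equidistribution results for Hardy field sequences on nilmanifolds replacing the role played by \cite{Le17} for shifted primes. I will focus on the four-term case \eqref{eq:july-15-1}, as the one- and two-term assertions are handled by strictly simpler versions of the same argument (and the one-term case is immediate from Khintchine's theorem).

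Define $\phi(m) := \mu(A \cap T^{-a_1 m} A \cap T^{-a_2 m} A \cap T^{-a_3 m} A)$ for $m \in \N$. By \cite[Theorem 4.1]{Leibman10}, we may decompose $\phi(m) = \psi(m) + \delta(m)$, where $\psi$ is a nilsequence and $\delta$ is null in Ces\`aro average. Approximating $\psi$ uniformly to within $\epsilon/4$ by a basic nilsequence $F(\tau^m 1_Y)$ on a nilmanifold $Y=G/\Gamma$ with $\tau$ acting ergodically, let $d$ denote the number of connected components of $Y$ and $Y_0$ the component containing $1_Y$. Since $a_3 = a_1 + a_2$, the triple $\{a_1(dm),a_2(dm),a_3(dm)\}$ fails to be of type $(e_1)$ (the additivity $a_1 + a_2 = a_3$ violates the condition $l + m \neq r$) and is manifestly linear, hence not of types $(e_2)$ or $(e_3)$. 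Thus \cite[Theorem C]{Frantzikinakis08} gives that the set $\{m \in \N \colon \phi(dm) > \mu(A)^4 - \epsilon/4\}$ is syndetic.

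The crux is to transfer this information from the arithmetic progression $d\N$ to the sequence $\lfloor f(n)\rfloor$. For this I would invoke two facts from the theory of Hardy field sequences, both of which are available under the growth hypothesis $|f(x) - cp(x)|/\log x \to \infty$ for every $c \in \R$ and $p \in \Z[x]$:
\begin{enumerate}
\item[(i)] (Equidistribution) For every basic nilsequence on $Y$ and every residue $r \in \{0,\dots,d-1\}$, the sequence $(\tau^{\lfloor f(n)\rfloor} 1_Y)_{n \in \N, \lfloor f(n)\rfloor \equiv r \bmod d}$ is equidistributed on the corresponding connected component of $Y$. This is essentially \cite{Frantzikinakis_Wierdl09,Frantzikinakis10}.
\item[(ii)] (Null sequences remain null) For every null sequence $\delta$ in the Leibman decomposition, $\lim_{N\to\infty} \tfrac{1}{N}\sum_{n=1}^N |\delta(\lfloor f(n)\rfloor)| = 0$. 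This is a standard consequence of (i) applied to the uniformity norms characterizing null sequences.
\end{enumerate}

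Granting (i) and (ii), the argument concludes in parallel with \cref{theorem:glob-primes}: syndeticity of $\{m \colon \phi(dm) > \mu(A)^4 - \epsilon/2\}$ together with (ii) along $dm$ forces some point $\tau^{dm}1_Y \in Y_0$ with $F(\tau^{dm}1_Y) > \mu(A)^4 - \epsilon/2$, and by continuity of $F$ there is an open $U \subseteq Y_0$ on which $F > \mu(A)^4 - 3\epsilon/4$. By (i), applied to the residue class $r=0 \bmod d$ (which has positive density among $n \in \N$, since by the growth assumption $\lfloor f(n)\rfloor \bmod d$ equidistributes on $\Z/d\Z$), the set $R := \{n \in \N \colon \tau^{\lfloor f(n)\rfloor} 1_Y \in U\}$ has positive lower density, and on it $F(\tau^{\lfloor f(n)\rfloor}1_Y) > \mu(A)^4 - 3\epsilon/4$. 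Removing from $R$ the density-zero subset $R' := \{n \in R \colon |\delta(\lfloor f(n)\rfloor)| \geq \epsilon/4\}$ guaranteed by (ii) yields a set of positive lower density on which $\phi(\lfloor f(n)\rfloor) > \mu(A)^4 - \epsilon$, as required.

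The main obstacle is rigorously establishing (i), which must be executed with some care: one needs to know that $\tau^{\lfloor f(n)\rfloor} 1_Y$ does not become trapped in a proper sub-nilmanifold (beyond the obvious obstruction from the finitely many connected components), and that the residue class $\lfloor f(n)\rfloor \bmod d$ equidistributes. The hypothesis $|f(x)-cp(x)|/\log x \to \infty$ is precisely designed to rule out trapping of this kind; reducing (i) to the statements in \cite{Frantzikinakis_Wierdl09,Frantzikinakis10} is the main technical step, whereas (ii) follows by a standard approximation argument combining (i) with the Host--Kra structure theorem.
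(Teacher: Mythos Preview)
Your argument is correct and follows the same template as the paper's proof, but you have imported more machinery from the shifted-primes case than is needed. The key simplification the paper exploits is that under the Hardy field hypothesis, Frantzikinakis' equidistribution theorem (\cite[Theorem~1.2]{Frantzikinakis09}) gives that $(\tau^{\lfloor f(n)\rfloor} 1_Y)_{n\in\N}$ is equidistributed on the \emph{entire} nilmanifold $Y$, not merely on the identity component. Consequently there is no need to pass to the connected component $Y_0$, to track residue classes $\lfloor f(n)\rfloor \bmod d$, or to formulate the component-by-component statement in your item (i); the open set $U\subset Y$ on which $F>\mu(A)^4-3\epsilon/4$ is visited with positive frequency directly. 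The paper even remarks explicitly on this contrast with the primes case.

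For item (ii), the paper simply quotes \cite[Theorem~1.1]{Le17}, which covers the preservation of null sequences along $\lfloor f(n)\rfloor$ outright; your route through uniformity norms and the Host--Kra structure theorem would also work but is more laborious. In short: your proof is valid, but the paper's version is shorter because the Hardy field equidistribution result is stronger than the shifted-primes analogue, and you should take advantage of that.
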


\begin{proof}
	We again only prove the set in \eqref{eq:july-15-1} has positive density since the proof for other two sets is the same. Similar to  the proof of \cref{theorem:glob-primes}, let
	\[
	    \phi(n) = \mu(A \cap T^{-a_1 n} A \cap T^{-a_2 n} A \cap T^{-a_3 n} A).
	\]
	Write $\phi(n) = \psi(n) + \delta(n)$, where $(\psi(n))_{n \in \N}$ is a nilsequence and $(\delta(n))_{n \in \N}$ satisfies
	\[
	    \lim_{N - M \to \infty} \frac{1}{N-M} \sum_{n=M}^{N-1} |\delta(n)| = 0.
	\]
	Let $(F(\tau^n 1_Y))_{n \in \N}$ be an approximation of $(\psi(n))_{n \in \N}$ as in the proof of \cref{theorem:glob-primes}.
By \cite[Theorem 1.1]{Le17},
	\[
	\lim_{N \to \infty} \frac{1}{N} \sum_{n=1}^N \Big|\delta\Big(\big\lfloor f(n) \big\rfloor\Big)\Big| = 0.
	\]
This implies
\begin{equation}
\label{eq:glob-Hardy-8.30.1}
    \limsup_{N \to \infty} \frac{1}{N} \sum_{n = 1}^N \bigg|\phi\Big(\big\lfloor f(n) \big\rfloor\Big) - F\Big(\tau^{\lfloor f(n) \rfloor} 1_Y\Big)\bigg| < \frac\epsilon4.
\end{equation}

	As in the proof of \cref{theorem:glob-primes}, there is an open set $U \subset Y$ such that $F > \mu(A)^4 - 3\epsilon/4$ on $U$.
	By \cite[Theorem 1.2]{Frantzikinakis09}, the sequence $(\tau^{\lfloor a(n) \rfloor} 1_X)_{n \in \N}$ is equidistributed on $Y$.
	Hence the set $\{n \in \mathbb{N}\colon F(\tau^{\lfloor a(n) \rfloor} 1_Y) > \mu(A)^4 - 3 \epsilon/4\}$ has positive density.
	This fact combined with (\ref{eq:glob-Hardy-8.30.1}) implies the set $\{n \in \mathbb{N}\colon \phi(\lfloor a(n) \rfloor) > \mu(A)^4 - \epsilon\}$ has positive lower density.
\end{proof}

\begin{remark}
	In the proof above, we do not utilize the fact that the open set $U$ is inside the identity component $Y_0$ of $Y$. This is because the orbit along $(\lfloor f(n) \rfloor)_{n \in \N}$ is equidistributed on the entire $Y$. On the contrary, the orbit along primes minus $1$ is only equidistributed on some connected components of $Y$ where $Y_0$ is one of them.
\end{remark}

\subsection{Optimal recurrence along Beatty sequences}
\label{secbeatty_sequences}
Let $(X, \mathcal{B}, \mu, T)$ be a measure preserving system.
The \emph{discrete spectrum} $\sigma(T)$ of $T$ is the set of eigenvalues $\theta \in \mathbb{T} := \mathbb{R}/\mathbb{Z}$ for which there exists a non-zero eigenfunction $f \in L^{2}(\mu)$ satisfying $f(T x)= e^{2 \pi i \theta} f(x)$ for $\mu$-almost every $x \in X$.

Given a measure preserving system $(X, \mathcal{B}, \mu, T)$, the transformation $T^q$ is ergodic if and only if $\sigma(T) \cap \langle 1/q \rangle = \{0\}$, where $\langle 1/q \rangle$ denotes the abelian group generated by $1/q$, as we view $1/q$ as an element of the group $\T$. Therefore
\cref{thm_progressionOR} follows from the next result.

\begin{theorem}\label{thm_beatty}
	Let $\theta, \gamma \in \mathbb{R}$ with $\theta > 0$ and $(X, \mathcal{B}, \mu, T)$ be an ergodic system whose discrete spectrum $\sigma(T)$ satisfies $\sigma(T) \cap \langle 1/\theta \rangle = \{0\}$. Let $0 \leq a_1 \leq a_2 \leq a_3 \in \mathbb{Z}$. Then for any $A \in \mathcal{B}$ and $\epsilon > 0$, the sets
	\[
	\{n \in \mathbb{N}\colon \mu(A \cap T^{-a_1 \lfloor \theta n + \gamma \rfloor} A) > \mu(A)^2 - \epsilon \}
	\]
	and
	\[
	\{n \in \mathbb{N}\colon \mu(A \cap T^{-a_1 \lfloor \theta n + \gamma \rfloor} A \cap T^{-a_2 \lfloor \theta n + \gamma \rfloor} A) > \mu(A)^3 - \epsilon \}
	\]
	are syndetic. If $a_3 = a_1 + a_2$ then the set
	\[
	\{n \in \mathbb{N}\colon \mu(A \cap T^{-a_1 \lfloor \theta n + \gamma \rfloor} A \cap T^{-a_2 \lfloor \theta n + \gamma \rfloor} A \cap T^{-a_3 \lfloor \theta n + \gamma \rfloor} A) > \mu(A)^4 - \epsilon\}
	\]
	is also syndetic.
\end{theorem}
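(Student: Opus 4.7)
The plan is to treat the four-term case; the two- and three-term assertions follow by the same argument and are strictly easier. Set
$$\phi(n):=\mu(A\cap T^{-a_1n}A\cap T^{-a_2n}A\cap T^{-a_3n}A),$$
and mirror the strategy of Theorems~\ref{theorem:glob-primes} and~\ref{theorem:glob-Hardyfull}: first I would invoke \cite[Theorem~4.1]{Leibman10} to decompose $\phi=\psi+\delta$ with $\psi$ a nilsequence and $\frac{1}{N-M}\sum_{n=M}^{N-1}|\delta(n)|\to 0$, approximate $\psi$ uniformly within $\epsilon/4$ by a basic nilsequence $F(\tau^{n}1_{Y})$ on an ergodic nilsystem $(Y=G/\Gamma,\tau)$, and set $U:=\{F>\mu(A)^{4}-\epsilon/2\}$, which is open of positive Haar measure since Bergelson--Host--Kra forces $\int F\,d\mu_{Y}\geq\mu(A)^{4}-\epsilon/4$. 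That $\delta(\lfloor\theta n+\gamma\rfloor)$ remains null in Cesaro average is a direct counting argument: each integer in $[\theta M,\theta N+O(1)]$ is attained $O(1/\theta+1)$ times by the values $\lfloor\theta n+\gamma\rfloor$ with $n\in[M,N)$, so the partial sums are bounded above by $O(1)\sum_{k}|\delta(k)|$ over this interval, which has vanishing Cesaro limit.

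The heart of the proof is to show that $R:=\{n\in\mathbb{N}\colon \tau^{\lfloor\theta n+\gamma\rfloor}1_{Y}\in U\}$ is syndetic. The Kronecker factor $Z_{1}^{Y}$ of $(Y,\tau)$, with rotation by some $\alpha\in Z_{1}^{Y}$, is a factor of the Kronecker factor of $(X,T)$, so every eigenvalue $\chi(\alpha)$ of $Z_{1}^{Y}$ lies in $\sigma(T)$. Under the hypothesis $\sigma(T)\cap\langle 1/\theta\rangle=\{0\}$, for any nontrivial character $(\chi,k)$ of $Z_{1}^{Y}\times\mathbb{T}$ the relation $\chi(\alpha)+k/\theta\equiv 0\pmod 1$ would force $\chi(\alpha)\in\langle 1/\theta\rangle\cap\sigma(T)=\{0\}$, which by ergodicity of the $\alpha$-rotation on $Z_{1}^{Y}$ would force $\chi$ trivial, a contradiction. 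Therefore the rotation $(\alpha,1/\theta)$ is ergodic on the closed subgroup of $Z_{1}^{Y}\times\mathbb{T}$ it generates, and since Kronecker controls ergodicity of nilsystems, the product nilsystem $(Y\times\mathbb{T},\,\tau\times R_{1/\theta})$ is ergodic on its orbit-closure as well. The joint recurrence set
$$\bigl\{m\in\mathbb{Z}\colon \tau^{m}1_{Y}\in U\text{ and }\{(m-\gamma)/\theta\}\in[1-1/\theta,1)\bigr\}$$
is then the set of hits of an open set in this minimal distal ergodic nilsystem, hence syndetic in $\mathbb{Z}$. For $\theta>1$ the Beatty map $n\mapsto\lfloor\theta n+\gamma\rfloor$ is a strictly increasing bijection from $\mathbb{N}$ onto the Beatty set $\mathcal{B}=\{m\colon \{(m-\gamma)/\theta\}\in[1-1/\theta,1)\}$ with increments in $\{\lfloor\theta\rfloor,\lceil\theta\rceil\}$, so syndeticity of the above set in $\mathbb{Z}$ transfers to syndeticity of $R\subset\mathbb{N}$ with gap inflated by at most a factor of $\lceil\theta\rceil$.

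The hard part will be handling two technicalities. First, $Y$ may be disconnected, in which case, following the proof of \cref{theorem:glob-primes}, I would split $n$ into residues modulo the number $d$ of components of $Y$, work in the identity component $Y_{0}$ under the ergodic $\tau^{d}$, and use the joint-density argument above to control the residue distribution of $\lfloor\theta n+\gamma\rfloor\pmod d$. Second, the argument needs small adjustments in the degenerate regimes: for integer $\theta=q$, the $\mathbb{T}$-rotation $R_{1/q}$ is of finite order so the product orbit-closure reduces to $Z_{1}^{Y}\times\mathbb{Z}/q\mathbb{Z}$, but the hypothesis still yields ergodicity there and the conclusion actually reduces to \cref{thm_progressionOR}; for $0<\theta\le 1$ the Beatty map is surjective but not injective, and $R$ becomes a finite union of shifted copies of the Kronecker recurrence set, syndetic a fortiori.
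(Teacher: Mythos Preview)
Your approach is essentially the paper's: decompose $\phi=\psi+\delta$ via Leibman, approximate by a basic nilsequence on an ergodic nilsystem $(Y,\tau)$, and exploit the spectral hypothesis to couple $(Y,\tau)$ with the rotation by $1/\theta$ on (a closed subgroup of) $\T$. Two remarks.

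First, the assertion that ``Bergelson--Host--Kra forces $\int_Y F\,d\mu_Y\geq \mu(A)^4-\epsilon/4$'' is not what those results give: they yield syndeticity of $\{n:\phi(n)>\mu(A)^4-\epsilon'\}$, not a lower bound on the full Ces\`aro average of $\phi$, and in general no such bound is available. The paper obtains the open set $U$ exactly as in the primes argument you are mirroring: use that syndeticity, together with the fact that $\delta\to 0$ in uniform density, to locate a single $n_0$ with $F(\tau^{n_0}1_Y)>\mu(A)^4-\epsilon/2$, and then let $U$ be a neighborhood of $\tau^{n_0}1_Y$ by continuity of $F$.

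Second, the paper packages the core step more efficiently than your hit-set/bijection argument: it proves directly that $(\tau^{\lfloor\theta n+\gamma\rfloor}1_Y)_n$ is \emph{well distributed} on all of $Y$, by noting that $m$ lies in the Beatty set iff $m/\theta\bmod 1$ falls in a fixed Riemann-integrable set $J\subset W:=\overline{\{n/\theta:n\in\Z\}}$ and that the nilsystems $(Y,\tau)$ and $(W,+1/\theta)$ are disjoint since their spectra are. Well-distribution on $Y$ immediately gives positive uniform density---hence syndeticity---of the hit set of $U$, with no case analysis. In particular your concern about disconnected $Y$ is unnecessary here: unlike the shifted-primes orbit, the Beatty orbit equidistributes on the whole nilmanifold, not merely the identity component.
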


\begin{proof}
	If $0 < \theta \leq 1$, then the set $S = \{\lfloor \theta n + \gamma \rfloor\colon n \in \mathbb{N} \}$ contains all but finitely many elements of $\N$. Hence the conclusion follows trivially from \cite[Theorem 1.2]{Bergelson_Host_Kra05}.
	
	Assume $\theta > 1$. Define $\phi, \psi, \delta, Y, F(\tau^n 1_Y)$ as in the proof of \cref{theorem:glob-Hardyfull}. Then by \cite[Theorem 2.1]{Moreira_Richter19}, the discrete spectrum of $(Y, \mu_Y, \tau)$ is contained in the discrete spectrum of $(X, \mu, T)$. Hence $\sigma(\tau) \cap \langle 1/\theta \rangle = \{0\}$.
	\begin{claim}
	\label{claim:july-15-1}
		The sequence $(\tau^{\lfloor \theta n + \gamma\rfloor} 1_Y)_{n \in \mathbb{N}}$ is well distributed on $Y$.
	\end{claim}
	\begin{proof}
		It suffices to show that for every $F \in C(Y)$,
		\begin{equation}
		\label{equation:beatty-1}
		\lim_{N - M \to \infty} \frac{1}{N-M} \sum_{n=M}^{N-1} F(\tau^{\lfloor \theta n + \gamma\rfloor} 1_Y) = \int_Y F \, d \mu_Y.
		\end{equation}
		As before, let $S = \{\lfloor \theta n + \gamma\rfloor\colon n \in \mathbb{N}\}$. Then  $m \in S$ if and only if $m \in \Z$ and
		\[
		\theta n + \gamma - 1 < m \leq \theta n + \gamma,
		\]
		or equivalently,
		\[
		n - \frac{1- \gamma}{\theta} < m \theta^{-1} \leq n + \frac{\gamma}{\theta}
		\]
		for some $n \in \Z$.
		This is equivalent of saying $ m \theta^{-1} \mod 1 \in J$, where $J = [0, \gamma/\theta] \cup ((1- \gamma)/\theta, 1)$.
		
		Let $W = \overline{ \{n \theta^{-1} \mod 1\colon n \in \mathbb{N}\}}$, where $\overline{\{ \cdot \}}$ denotes taking closure in $\T$.
		Then $W$ is a closed subgroup of $ \mathbb{T}$. Since $\sigma(\tau) \cap \langle \theta^{-1} \rangle = \{0\}$, for any $F \in C(Y)$ and $G \in C(W)$,
		\[
	    	\lim_{N - M \to \infty} \frac{1}{N-M} \sum_{m=M}^{N-1} F(\tau^m 1_Y) G(m \theta^{-1}) = \int_{Y} F \, d \mu_Y \int_{W} G \, d \mu_W.
		\]
		This follows from the fact that two nilsystems are disjoint if and only if their discrete spectrums are disjoint.
		Approximating the Riemann integrable function $\one_{J \cap W}$ by continuous functions, we then get
		\[
		\lim_{N - M \to \infty} \frac{1}{N-M} \sum_{m=M}^{N-1} F(\tau^m 1_Y) \one_{J \cap W} (m \theta^{-1}) = \mu_W(J \cap W) \int_Y F \, d \mu_Y,
		\]
		or equivalently
		\begin{equation}
		\label{equation:beatty-2}
		\frac{1}{\mu_W(J \cap W)} \lim_{N - M \to \infty} \frac{1}{N-M} \sum_{m=M}^{N-1} F(\tau^m 1_Y) \one_{J \cap W} (m \theta^{-1}) = \int_Y F \, d \mu_Y.
		\end{equation}
		Note that $\{m \theta^{-1}\} \in J \cap W$ if and only if $m \in S$, and the uniform density of $S$ is exactly $\mu_W(J \cap W)$. Therefore the left hand side of (\ref{equation:beatty-2}) is the same as the left hand side of (\ref{equation:beatty-1}). This proves \cref{claim:july-15-1}.
\end{proof}
	
	Now we return to the proof of \cref{thm_beatty}. As shown in the proof of \cref{theorem:glob-primes}, there exists an open set $U \subset Y$ such that $F > \mu(A)^{4} - 3 \epsilon/4$ on $U$. Since $(\tau^{\lfloor \theta n + \gamma \rfloor} 1_X)_{n\in\mathbb{N}}$ is well distributed on $Y$, the set  $S = \{n \in \mathbb{N}\colon F(\tau^{\lfloor \theta n + \gamma \rfloor} 1_X) > \mu(A)^4 - 3 \epsilon/4\}$ is syndetic.
	Since the sequence $(\delta(n))_{n\in\N}$ tends  to zero in the uniform density, and the set $\{\lfloor \theta n + \gamma\rfloor\colon n \in S\}$ has positive uniform density, we have
	\[
	\lim_{N - M \to \infty} \frac{1}{|S \cap [M, N)|} \sum_{n \in S \cap [M, N)} |\delta(\lfloor \theta n + \gamma \rfloor)| = 0.
	\]
	Therefore
	\[
	\limsup_{N - M \to \infty} \frac{1}{|S \cap [M, N)|} \sum_{n \in S \cap [M, N)} |\phi(\lfloor \theta n + \gamma\rfloor) - F(\tau^{\lfloor \theta n + \gamma \rfloor} 1_Y)| < \epsilon/4.
	\]
	Since $F(\tau^{\lfloor \theta n + \gamma\rfloor} 1_Y) > \mu(A)^4 - 3 \epsilon/4$ for $n \in S$, we get that the set of $n \in S$ such that $\phi(\lfloor \theta n + \gamma\rfloor) > \mu(A)^4 - \epsilon$ is syndetic. This finishes the proof.
\end{proof}

\section{Optimal recurrence along \texorpdfstring{$\{a_1 n, a_2 n, \ldots, a_k n\}$}{(a1 n, a2 n,..., ak n)}}
\label{s4}

\subsection{Lack of optimal recurrence for \texorpdfstring{$\mathbf{k=5}$}{k=5}}

\label{sec_k=5}

In this section, we prove \cref{thm_BadRecurrence}.
We adapt the proof of Theorem 1.3 in \cite{Bergelson_Host_Kra05}.

We use the measure preserving system $(X,\mathcal{B},\mu,T)$, where $X=\T^2$ is the $2$-dimensional torus, $\mu$ is the Haar measure, and $T(x,y)=(x+\alpha,y+2x+\alpha)$ for some irrational $\alpha\in\R$.
It is well known that this system is totally ergodic.
For every $n\in\Z$ and every point $(x,y)\in\T^2$, a quick computation shows that $T^n(x,y)=(x+n\alpha,y+2nx+n^2\alpha)$.

Let $\ell>1$. We take a suitably large $L,C\in\N$ and a set $\Lambda\subset\{0,\dots,L-1\}$ to be chosen later.
Let
$$B:=\bigcup_{b\in\Lambda}I_b\quad\text{where }I_b:=\left[\frac b{CL},\frac b{CL}+\frac1{C^2L}\right)$$
and let $A=\T\times B$.
For each $n\in\Z$, in order for a point $(x,y)$ to belong to $T^{-a_1n}A\cap\cdots\cap T^{-a_5n}A$, we need $y_i:=y+2a_inx+a_i^2n^2\alpha\in B$ for each $i=1,\dots,5$.
Let $b_i\in\Lambda$ be such that $y_i\in I_{b_i}$.

We now need the following elementary lemma.
\begin{lemma}\label{lemma_elementary}
Let $a_1,\dots,a_4\in\Z$ be distinct and let $M$ be the $4\times 3$ matrix whose $(i,j)$ entry is $a_i^j$ for $i=1,\dots,4$ and $j=0,1,2$.
For each $i=1,\dots,4$, let $v_i$ be $(-1)^i$ times the determinant of the matrix obtained from $M$ by deleting the $i$-th row.
Then for every quadratic polynomial $f\in\R[x]$, $$v_1f(a_1)+v_2f(a_2)+v_3f(a_3)+v_4f(a_4)=0.$$
\end{lemma}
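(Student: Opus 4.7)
The plan is to recognize the claimed identity as the cofactor expansion of the determinant of an auxiliary $4\times 4$ matrix that is visibly singular whenever $f$ has degree at most $2$. Concretely, I would introduce
\[
    N_f := \begin{pmatrix} 1 & a_1 & a_1^2 & f(a_1) \\ 1 & a_2 & a_2^2 & f(a_2) \\ 1 & a_3 & a_3^2 & f(a_3) \\ 1 & a_4 & a_4^2 & f(a_4) \end{pmatrix},
\]
so that the first three columns of $N_f$ form exactly the matrix $M$ in the statement, and the fourth column records the evaluations $f(a_i)$.

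Next I would expand $\det(N_f)$ along its fourth column. The $(i,4)$-cofactor is $(-1)^{i+4}\det(M_{(i)})$, where $M_{(i)}$ denotes $M$ with the $i$-th row removed; since $(-1)^{i+4}=(-1)^i$, this cofactor equals $v_i$ by definition. Hence
\[
    \det(N_f) \;=\; \sum_{i=1}^4 v_i\, f(a_i).
\]

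Finally I would observe that if $f(x)=c_0+c_1x+c_2x^2$ is any polynomial of degree at most $2$, then the fourth column of $N_f$ is the linear combination $c_0(\text{col }1)+c_1(\text{col }2)+c_2(\text{col }3)$ of the first three columns. Consequently $\det(N_f)=0$, which together with the cofactor expansion above gives $\sum_{i=1}^4 v_i f(a_i)=0$, as required. There is essentially no obstacle here: the argument is a clean one-line linear-algebra observation, and the hypothesis that the $a_i$ are distinct is not even needed for the identity itself (it only guarantees that the $v_i$ are not all zero, which is not asserted in the lemma).
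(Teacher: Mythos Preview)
Your argument is correct and is essentially identical to the paper's: both form the $4\times 4$ matrix obtained by adjoining the column $(f(a_i))_{i=1}^4$ to $M$ and observe that it is singular because that column is a linear combination of the other three, then read off the identity by cofactor expansion. The only cosmetic difference is that the paper places the $f(a_i)$ column first rather than last.
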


\begin{proof}
The claim amounts to the statement that the matrix
$$\left[\begin{array}{cccc}
f(a_1) & 1& a_1 & a_1^2\\
f(a_2) & 1& a_2 & a_2^2\\
f(a_3) & 1& a_3 & a_3^2\\
f(a_4) & 1& a_4 & a_4^2
\end{array}\right]$$
has determinant $0$.
But this follows from the fact that any quadratic polynomial is a linear combination of the polynomials $1,x,x^2$.
\end{proof}

In view of \cref{lemma_elementary}, there exist integers $v_1,\dots,v_4$ and $\tilde v_2,\dots\tilde v_5$ (depending only on $a_1,\dots,a_5$, but not on $x$ nor $y$) such that $v_1y_1+\cdots+v_4y_4=0$ and $\tilde v_2y_2+\cdots+\tilde v_5y_5=0$.
Therefore, if $C$ is large enough, then also $v_1b_1+\cdots+v_4b_4=\tilde v_2b_2+\cdots+\tilde v_5b_5=0$, as it will be an integer which can be made smaller than $1$ when $C$ is large enough.

Suppose now that $\Lambda$ does not contain any solution to $v_1b_1+\cdots+v_4b_4=\tilde v_2b_2+\cdots+\tilde v_5b_5=0$ except when $b_1=\cdots=b_5$.
Then, if $(x,y)\in T^{-a_1n}A\cap\cdots\cap T^{-a_5n}A$, all the $y_i$ must belong to the same $I_b$, which implies that $x\in X_n$, where $X_n$ is the set of points $x\in\T$ satisfying $\big\|2n(a_2-a_1)x\big\|_\T<1/C^2L$.
Since $y_1\in B$, the point $y$ must belong to the set $B-2a_1nx-a_1^2n^2\alpha$, which being a shift of $B$ has the same measure as $B$.
We conclude that
$$\mu(T^{-a_1n}A\cap\cdots\cap T^{-a_5n}A)\leq \mu_\T(X_n)\mu_\T(B)=\frac2{C^4L^2}|\Lambda|.$$

Since $\mu(A)=|\Lambda|\frac1{C^2L}$, a quick computation now shows that the proof will be complete once we construct a set $\Lambda\subset\{0,\dots,L-1\}$ with $|\Lambda|>L^{1-1/\ell}$ and without non-constant solutions to $v_1b_1+\cdots+v_4b_4=\tilde v_2b_2+\cdots+\tilde v_5b_5=0$.
The existence of such a set $\Lambda$ is provided by the following lemma.

\begin{lemma}
\label{lemma_Ruzsa}
Let $a_1,\dots,a_5\in\Z$ be pairwise distinct and let $v_i$ and $\tilde v_i$ be described in the paragraph after \cref{lemma_elementary}.
For every $\epsilon>0$ and every large enough $L\in\N$, there exists a set $\Lambda\subset\{0,\dots,L-1\}$ with $|\Lambda|>L^{1-\epsilon}$  such that the only $b_1,\dots,b_5\in\Lambda$ satisfying $v_1b_1+\cdots+v_4b_4=\tilde v_2b_2+\cdots+\tilde v_5b_5=0$ also satisfy $b_1=\cdots=b_5$.
\end{lemma}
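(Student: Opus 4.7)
The construction is in the spirit of Ruzsa's appendix to \cite{Bergelson_Host_Kra05}, which handles the analogous case of a single equation in $4$ variables. The starting point is an algebraic reformulation: by \cref{lemma_elementary}, the common solutions in $\Q^5$ of $v_1 b_1 + \cdots + v_4 b_4 = 0$ and $\tilde v_2 b_2 + \cdots + \tilde v_5 b_5 = 0$ are exactly the $5$-tuples $(f(a_1),\ldots,f(a_5))$ with $f \in \Q[x]$ of degree at most $2$, and the constant solutions $b_1 = \cdots = b_5$ correspond exactly to constant $f$. So the task becomes: find a large $\Lambda$ containing no $5$-tuple of the form $(f(a_1),\ldots,f(a_5))$ for a non-constant quadratic $f$.

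Fix $d \in \N$ with $d > 2/\epsilon$ and set $C := \sum_i |v_i| + \sum_i |\tilde v_i|$. Given a large $L$, choose a base $k$ so that $L \asymp D k^{d-1}$ where $D := \lfloor k/(2C) \rfloor$, and use the Behrend-type set
\[
    \Lambda := \Big\{ n = \sum_{j=0}^{d-1} n_j k^j \colon n_j \in \{0,\ldots,D-1\}, \ \textstyle\sum_{j=0}^{d-1} n_j^2 = R \Big\},
\]
where $R$ is picked by pigeonhole so that $|\Lambda| \ge D^{d-2}/d$; a direct computation shows $|\Lambda| > L^{1-\epsilon}$ for $L$ large. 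Since $V := \sum_i |v_i|$ satisfies $V \cdot D < k/2$, the standard no-carrying argument applied to $\sum_i v_i b_i = 0$ in base $k$ forces the digit-wise identity $\sum_i v_i b_j^{(i)} = 0$ at every position $j$, and likewise for the $\tilde v$-equation. Hence for each $j$ there is a quadratic $f_j \in \R[x]$ with $b_j^{(i)} = f_j(a_i)$ for all $i$.

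Bundle the $d$ digit polynomials into a single vector-valued polynomial $\vec f(x) = \alpha + \beta x + \gamma x^2 \in \R^d[x]$, so that the digit vector $\vec b^{(i)} := (b_0^{(i)},\ldots,b_{d-1}^{(i)}) \in \R^d$ satisfies $\vec b^{(i)} = \vec f(a_i)$. Membership in $\Lambda$ gives $\|\vec f(a_i)\|^2 = R$ for $i = 1,\ldots,5$, so the scalar polynomial $\|\vec f(x)\|^2 - R$ has degree at most $4$ yet vanishes at the $5$ distinct points $a_1,\ldots,a_5$, hence vanishes identically. Its leading coefficient $\|\gamma\|^2$ vanishes, forcing $\gamma = 0$; the $x^2$ coefficient $\|\beta\|^2 + 2\langle\alpha,\gamma\rangle$ then gives $\beta = 0$. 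Therefore $\vec f$ is constant, $\vec b^{(1)} = \cdots = \vec b^{(5)}$, and finally $b_1 = \cdots = b_5$.

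The main obstacle is precisely this last step. A single sphere constraint in $\R^d$ cannot, in general, annihilate solutions of two simultaneous linear equations in five variables; indeed, for a generic pair of translation-invariant equations one expects non-diagonal solutions on a sphere. What saves the argument is the very particular form of the system: because its solution space is exactly the image of quadratic interpolation at the $5$ distinct points $a_1,\ldots,a_5$, passing to vector coefficients converts the $5$ sphere equations into a degree-$4$ polynomial identity with $5$ zeros, supplying one more equation than there are parameters and forcing the leading coefficients of $\vec f$ to vanish.
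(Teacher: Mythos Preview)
Your proof is correct and, in its key step, cleaner than the paper's. The Behrend-type construction (restricted digits, then a sphere slice chosen by pigeonhole) and the no-carry reduction to coordinatewise equations match the paper essentially verbatim. The substantive divergence is in how you handle the resulting statement ``five vectors in $\R^d$ of equal norm satisfying the two linear relations must coincide'' (the paper's \cref{lemma_samenorm}).

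The paper proves \cref{lemma_samenorm} by a direct but somewhat laborious computation: it parametrizes $b_1,\dots,b_4$ by $S,A,B$, uses the four equal-norm identities to relate $\|A\|,\|B\|,\langle S,A\rangle,\langle S,B\rangle$, and then brings in $b_5$ and the explicit Vandermonde form of the $v_i,\tilde v_i$ to force $A=B=0$; along the way it must rule out the degenerate case $v_1/v_3=v_2/v_4$ via $(a_1-a_4)^2\neq(a_2-a_3)^2$. You instead observe that, because the joint kernel of the two equations is precisely the image of $f\mapsto(f(a_1),\dots,f(a_5))$ for $f$ of degree $\le 2$ (one inclusion is \cref{lemma_elementary}, the other is a dimension count using $v_1\neq 0$ so the two equations are independent), the digit vectors are $\vec f(a_i)$ for a vector-valued quadratic $\vec f$, and then $\|\vec f(x)\|^2-R$ is a degree-$\le 4$ scalar polynomial with five distinct roots, hence identically zero; reading off the $x^4$ and $x^2$ coefficients kills $\gamma$ and then $\beta$. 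This bypasses the paper's inner-product manipulations and the case analysis entirely, and it makes transparent \emph{why} five points are exactly what is needed. Conversely, the paper's approach has the minor virtue of not requiring the converse inclusion (that every solution comes from a quadratic), though that inclusion is immediate here.

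One cosmetic point: the statement asks for all sufficiently large $L$, while both you and the paper effectively construct $\Lambda$ only for $L$ of a special shape ($L=m^d$, resp.\ $L\asymp Dk^{d-1}$); to cover general $L$ one just runs the construction with $\epsilon'<\epsilon$ and embeds. This is a standard gloss and not a gap.
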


\cref{lemma_Ruzsa} is a generalization of \cite[Theorem 2.4]{Bergelson_Host_Kra05}, due to Ruzsa, corresponding to $a_i=i$.
The key to proving \cref{lemma_Ruzsa} is the following intermediate result.

\begin{lemma}\label{lemma_samenorm}
Let $a_1<\dots<a_5$ be integers and let $v_i$ and $\tilde v_i$ be as described above.
Let $d\in\N$ and let $b_1,\dots,b_5\in\R^d$ all have the same Euclidean norm.
If $v_1b_1+\cdots+v_4b_4=\tilde v_2b_2+\cdots+\tilde v_5b_5=0$ then $b_1=\cdots=b_5$.
\end{lemma}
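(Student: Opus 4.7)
My plan is to repackage the two linear relations among $b_1,\dots,b_5$ as the statement that all five vectors lie on a single $\R^d$-valued quadratic curve, and then to use the constant-norm hypothesis to force that curve to be a single point.

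The first step is to show that for distinct $a_1,\dots,a_4$, the evaluation map $\mathrm{ev}\colon\R[x]_{\leq 2}\to\R^4$, $f\mapsto(f(a_1),\dots,f(a_4))$, has image exactly $v^{\perp}$, where $v=(v_1,v_2,v_3,v_4)$. Indeed, $\mathrm{ev}$ is injective since a quadratic vanishing at four distinct points is zero, so its image is a $3$-dimensional subspace of $\R^4$; the vector $v$ lies in the annihilator of this image by \cref{lemma_elementary}; and $v$ is nonzero because each $v_i$ is, up to sign, a $3\times 3$ Vandermonde determinant on three of the distinct $a_j$. Applying this coordinate by coordinate in $\R^d$, the hypothesis $v_1b_1+\cdots+v_4b_4=0$ produces a $\R^d$-valued quadratic $q(x)=c_0+c_1x+c_2x^2$ with $q(a_i)=b_i$ for $i=1,2,3,4$; symmetrically, the second hypothesis produces a quadratic $\tilde q$ with $\tilde q(a_i)=b_i$ for $i=2,3,4,5$.

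Next I would note that $q$ and $\tilde q$ are $\R^d$-valued quadratics agreeing at the three distinct points $a_2,a_3,a_4$, so they coincide, and therefore $q$ interpolates all five $b_i$. Setting $r:=\|b_1\|$, the real-valued polynomial $P(x):=\|q(x)\|^2-r^2$ has degree at most $4$ but vanishes at $a_1,\dots,a_5$, so $P\equiv 0$. The coefficient of $x^4$ in $\|q(x)\|^2$ is $\|c_2\|^2$, forcing $c_2=0$; then the coefficient of $x^2$ reduces to $\|c_1\|^2$, forcing $c_1=0$. Thus $q$ is the constant $c_0$, and $b_1=\cdots=b_5$.

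The only slightly technical step is identifying the image of $\mathrm{ev}$ with $v^{\perp}$, which is immediate once one verifies that $v\neq 0$ via the Vandermonde argument above. I do not expect any further obstacle: everything after the interpolation step is elementary polynomial algebra, and the five-roots-of-a-degree-four-polynomial trick delivers the conclusion cleanly.
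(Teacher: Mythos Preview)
Your proof is correct and takes a genuinely different, cleaner route than the paper's.

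The paper proceeds by an explicit algebraic parametrization: it writes $b_1,\dots,b_4$ as $S+A$, $S+B$, $S-\tfrac{v_1}{v_3}A$, $S-\tfrac{v_2}{v_4}B$ for suitable $S,A,B\in\R^d$, uses the equal-norm hypothesis to extract relations among $\|A\|^2,\|B\|^2,\langle S,A\rangle,\langle S,B\rangle$, then brings in the second linear relation to force $B$ to be a positive multiple of $A$, and finally appeals to the Vandermonde identity $(a_1-a_4)^2\neq(a_2-a_3)^2$ to conclude $A=0$. The authors themselves describe parts of this as ``somewhat cumbersome'' and ``tedious.'' By contrast, your argument packages both linear constraints as the single statement that $b_1,\dots,b_5$ lie on a common $\R^d$-valued quadratic $q$, and then the equal-norm hypothesis says the degree-$4$ real polynomial $\|q(x)\|^2$ is constant on five points, hence identically constant; reading off the $x^4$ and then $x^2$ coefficients kills $c_2$ and $c_1$ in two lines. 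This is shorter, more conceptual, and uses the specific Vandermonde structure of the $v_i,\tilde v_i$ only through the property already isolated in \cref{lemma_elementary} (plus the easy check $v\neq 0$), rather than through explicit formulas for the individual $v_i$.
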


Unfortunately \cref{lemma_samenorm} does not hold for arbitrary $v_i$ and $\tilde v_i$, as seen by the example $v_1=v_3=\tilde v_3=\tilde v_5=1$ and $v_2=v_4=\tilde v_2=\tilde v_4=-1$ which would provide a counterexample with $d=1$ and $b_1=b_2=b_5=1$ and $b_3=b_4=-1$.
Indeed we will need to use the description of the $v_i$ and $\tilde v_i$ given by \cref{lemma_elementary} and this makes the proof somewhat cumbersome.

\begin{proof}
The condition $a_1<\cdots<a_5$ implies that $v_1,v_3>0$ and $v_2,v_4<0$.
Let
$$S:=\frac{v_1b_1+v_3b_3}{v_1+v_3},\qquad A:=b_1-S,\qquad B=b_2-S.$$
Applying \cref{lemma_elementary} to a constant polynomial, we get that $v_1+v_2+v_3+v_4=0$ and hence, together with $v_1b_1+\cdots+v_4b_4=0$, that $S=(v_2b_2+v_4b_4)/(v_2+v_4)$.
Then we have
$$b_1=S+A,\qquad b_2=S+B,\qquad b_3=S-\frac{v_1}{v_3}A,\qquad b_4=S-\frac{v_2}{v_4}B.$$
Our goal is to show that $b_1=b_2=b_3=b_4=S$, and so it suffices to show that $A=B=0$ (the fact that also $b_5=S$ would then immediately follow from the equation $\tilde v_2b_2+\cdots+\tilde v_5b_5=0$).
Since the quantity $\|b_i\|^2-\|S\|^2$ does not depend on $i$, we find that the following $4$ numbers are equal
\begin{equation}\label{eq_equal}
\|A\|^2+2\langle S,A\rangle,
\qquad
\|B\|^2+2\langle S,B\rangle,
\qquad
\frac{v_1^2}{v_3^2}\|A\|^2-\frac{2v_1}{v_3}\langle S,A\rangle,
\qquad
\frac{v_2^2}{v_4^2}\|B\|^2-\frac{2v_2}{v_4}\langle S,B\rangle.
\end{equation}
Equality between the first and third gives $2\langle S,A\rangle= \|A\|^2\left(\tfrac{v_1}{v_3}-1\right)$; equality between the second and fourth gives $2\langle S,B\rangle=\|B\|^2\left(\tfrac{v_2}{v_4}-1\right)$ and then equality between the first two numbers implies
\begin{equation}\label{eq_normequality}
  \|A\|^2v_1v_4=\|B\|^2v_2v_3.
\end{equation}
In order to show that $A=B=0$, we first show that $B$ is a positive scalar multiple of $A$.
Once we do that, we have from \eqref{eq_normequality} that $B=\sqrt{\tfrac{v_1v_4}{v_2v_3}}A$ and hence, equality between the first and last quantities from \eqref{eq_equal} (together with $2\langle S,A\rangle= \|A\|^2\left(\tfrac{v_1}{v_3}-1\right)$) gives
$$\|A\|^2\frac{v_1}{v_3}=\frac{v_1v_2}{v_3v_4}\|A\|^2- 2\sqrt{\frac{v_1v_2}{v_3v_4}}\langle S,A\rangle\quad\iff\quad\|A\|^2\sqrt{\frac{v_1}{v_3}} \left(\sqrt{\frac{v_1}{v_3}}-\sqrt{\frac{v_2}{v_4}}\right) \left(1+\sqrt{\frac{v_1v_2}{v_3v_4}}\right)=0.$$
This implies that $A=0$ unless $\tfrac{v_1}{v_3}=\tfrac{v_2}{v_4}$.
Using the description of each $v_i$ from \cref{lemma_elementary} as a Vandermonde determinant, this is equivalent to $(a_1-a_4)^2=(a_2-a_3)^2$. Since we are assuming that $a_1<a_2<a_3<a_4$ this can not happen and hence $A=0$.

We have reduced the proof to showing that $B$ is a positive scalar multiple of $A$.
It is now the time to use the fact that also $\tilde v_2b_2+\cdots+\tilde v_5b_5=0$.
From \cref{lemma_elementary}, we deduce that $\tilde v_5=v_1$, and so we can write $b_5$ in terms of $S,A,B$ as
$$b_5=
\frac1{v_1}(-\tilde v_2b_2-\tilde v_3b_3-\tilde v_4b_4)
=
S+\frac{\tilde v_3}{v_3}A
+\left(\frac{v_2\tilde v_4-\tilde v_2v_4}{v_4v_1}\right)B=S+\alpha A+\beta B,$$
where $\alpha:=\frac{\tilde v_3}{v_3}$ and $\beta:=\frac{v_2\tilde v_4-\tilde v_2v_4}{v_4v_1}$.
Using the relations established above to write $\|B\|^2$, $\langle S,A\rangle $ and $\langle S,B\rangle$ in terms of $\|A\|^2$ we compute
$$\|b_5\|^2-\|S\|^2=2\alpha\beta\langle A,B\rangle+
\|A\|^2\left[\alpha^2+\alpha\left(\frac{v_1}{v_3}-1\right)+\left(\beta^2+\beta\left(\frac{v_2}{v_4}-1\right)\right)\frac{v_1v_4}{v_3v_2}\right].$$
Since $\|b_5\|=\|b_1\|$, we deduce that $\|b_5\|^2-\|S\|^2=\|A\|^2\tfrac{v_1}{v_3}$. After a somewhat tedious computation, we eventually arrive at $\langle A,B\rangle=\|A\|^2\sqrt{\tfrac{v_1v_2}{v_3v_4}}=\|A\|\cdot\|B\|$.
But this implies that $B$ must be a positive scalar multiple of $A$ as desired, finishing the proof.
\end{proof}

\begin{proof}[Proof of \cref{lemma_Ruzsa}]
     Let $C=|v_1|+\cdots+|v_4|+|\tilde v_2|+\cdots+|\tilde v_5|$, let $d>2/\epsilon$ be a natural number and then let $m\in\N$ be large enough multiple of $C$ depending only on $C,d$ and $\epsilon$ (in fact, we need that $m^{d\epsilon-2}>C^{d-2}d)$. Set $L=m^d$.
     We can express each number in $\{0,\dots,L-1\}$ using $d$ digits in base $m$ expansion.
     Let
    \[
        F := \left\{x_0 + x_1 m + \ldots + x_{d-1} m^{d-1}\colon x_i \in \left[0,\dots,\frac mC\right)\right\}.
    \]
    We have $|F|=(m/C)^d$. Let $r\colon F\to\N$ be the sum of the squares of the digits in base $m$, in other words, $r(x_0 + x_1 m + \ldots + x_{d-1} m^{d-1})=x_0^2+\cdots+x_{d-1}^2$.
    Then $r(F)\subset[0,dC^2/m^2)$.
    Therefore there exists $r_0\in[0,dC^2/m^2)$ such that the set
    $$\Lambda:=\{x\in F\colon r(x)=r_0\}$$
    has cardinality $|\Lambda|\geq(m/C)^{d-2}/d$.
    The choice of parameters above yields $|\Lambda|>L^{1-\epsilon}$.

    Finally, suppose that $b_1,\dots,b_5\in\Lambda$ satisfy $v_1b_1+\cdots+v_4b_4=\tilde v_2b_2+\cdots+\tilde v_5b_5=0$.
    We identify each $b_i$ with the vector in $\R^d$ obtained from its digits in base $m$.
    Then $\|b_1\|=\cdots=\|b_5\|$.
    Since each digit in $b_i$ is at most $m/C$, there is no carryover when multiplying by $v_i$ or $\tilde v_i$ and thus, the equations $v_1b_1+\cdots+v_4b_4=\tilde v_2b_2+\cdots+\tilde v_5b_5=0$ apply even when multiplication and addition is being performed in $\R^d$.
    Applying \cref{lemma_samenorm}, we conclude that indeed $b_1=\cdots=b_5$ as desired.
\end{proof}

\subsection{Abundance of solutions implies optimal recurrence for \texorpdfstring{$\mathbf{k = 4}$}{k=4}}

\label{sec_proofintegeranalysis}

In this section we prove Theorem \ref{thm:integerdirect}.
We first need to reformulate the assumptions in terms of functions on a torus; this is the content of \cref{lem:equ} below.
We start with an estimate from harmonic analysis.
Let $A$ be a finite set,  $f\colon A\to\R$ a function and $p>0$.
We denote by $\|f\|_{L^p}$ its usual $L^p$ quasinorm when $A$ is endowed with the normalized counting probability measure, i.e.
$$\|f\|_{L^p}:=\left(\frac1{|A|}\sum_{a\in A}\big|f(a)\big|^p\right)^{1/p}.$$
We will also make use of the weak $L^p$ quasinorm:
$$\Vert f\Vert_{L^p_w}:=\sup_{s>0}~s\cdot\left(\frac{\vert\{a\in A\colon\vert f(a)\vert>s\}\vert}{|A|}\right)^{1/p}.$$
We remark that when $p<1$ these quasinorms do not satisfy the triangle inequality.
We will only use these quasinorms with $p<1$ to invoke the following well known interpolation lemma. We include its short proof for completeness.
\begin{lemma}\label{thm:itp}
	Let $0<p<r<\infty$ and let $A$ be a finite set. For every function $f\colon A\to\R$ we have
	$$\Vert f\Vert^{r}_{L^r}\leq \frac{r}{r-p}\Vert f\Vert^{p}_{L^p_w}\Vert f\Vert^{r-p}_{L^{\infty}}.$$
\end{lemma}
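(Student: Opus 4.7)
The plan is to prove this by the standard layer-cake representation combined with truncation at the $L^\infty$ norm. The key observation is that the distribution function $\lambda(s):=|\{a\in A\colon |f(a)|>s\}|/|A|$ vanishes identically for $s\geq\|f\|_{L^\infty}$, while the weak $L^p$ quasinorm controls it pointwise by $\lambda(s)\leq\|f\|_{L^p_w}^p/s^p$ for all $s>0$. Plugging these two bounds into the layer-cake formula should yield the stated inequality after a one-line integration.

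More concretely, I would first write
\[
    \|f\|_{L^r}^r \;=\; \frac{1}{|A|}\sum_{a\in A}|f(a)|^r \;=\; \int_0^\infty r s^{r-1}\lambda(s)\,ds,
\]
which is just Fubini applied to $|f(a)|^r=\int_0^{|f(a)|} rs^{r-1}\,ds$. Since $\lambda(s)=0$ for $s\geq\|f\|_{L^\infty}$, the integral is actually over $[0,\|f\|_{L^\infty}]$. Next, from the definition $\|f\|_{L^p_w}=\sup_{s>0} s\,\lambda(s)^{1/p}$ we get $\lambda(s)\leq \|f\|_{L^p_w}^p\,s^{-p}$, so
\[
    \|f\|_{L^r}^r \;\leq\; r\,\|f\|_{L^p_w}^p\int_0^{\|f\|_{L^\infty}} s^{r-p-1}\,ds \;=\; \frac{r}{r-p}\,\|f\|_{L^p_w}^p\,\|f\|_{L^\infty}^{r-p},
\]
where the last equality uses $r>p$ to guarantee the integral converges. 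This is exactly the desired bound.

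There is essentially no obstacle here; the only things to be careful about are the convergence condition $r>p$ (which is assumed) and the fact that we are working with a probability space so that the layer-cake formula and weak-$L^p$ definition match without any extra constants. The statement is dimensionally correct ($p+(r-p)=r$ on the right) and matches the classical Marcinkiewicz interpolation estimate, restricted to the endpoint case when one endpoint is $L^\infty$.
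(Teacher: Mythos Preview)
Your proof is correct and follows essentially the same route as the paper: layer-cake representation, the pointwise bound $\lambda(s)\le\|f\|_{L^p_w}^p s^{-p}$, and integration. If anything, your version is cleaner in making the truncation at $\|f\|_{L^\infty}$ explicit, whereas the paper writes the final bound as an integral over $[0,\infty)$ with the truncation left implicit.
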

\begin{proof}
	Combining the identity
	$$x^r=\int_0^xrs^{r-1}\,ds=r\int_0^\infty s^{r-1}\one_{[0,x]}(s)\,ds =r\int_0^\infty s^{r-1}\one_{\{x>s\}}\,ds$$
	with the definition of $L^r$ norm, we deduce the formula
	$$\|f\|^r_{L^r}
	=
	r\int_0^\infty s^{r-1}\frac1{|A|}\sum_{a\in A}\one_{\{|f(a)|>s\}}\,ds
	=
	r\int_0^\infty s^{r-1}\frac{\big|\{a\in A\colon |f(a)|>s\}\big|}{|A|}\,ds.$$
	Finally, using the definition of the weak $L^p$ norm, we conclude
	$$\|f\|^r_{L^r}
	\leq
	r\int_0^\infty s^{r-1-p}\|f\|^p_{L^p_w}\,ds
	=
	\frac r{r-p}\Vert f\Vert^{p}_{L^p_w}\Vert f\Vert^{r-p}_{L^{\infty}}.$$
\end{proof}

The following lemma makes use of the quantities $d_{m,N}(E)$ and $D_{m,N}(V,E)$ introduced in \cref{definition_densities}.

\begin{lemma}[Equivalent inequalities]\label{lem:equ}
	Let $m,d,\ell\in\N$ with $\ell>d$, let $C>0$, $V\subseteq\mathbb{Q}^{d}$ be a subspace containing the vector $(1,\dots,1)$ and $\overline{V}\in\mathbb{R}^{d}$ be its closure in $\R^{d}$.
	Then $(1)\Leftrightarrow (2)\Rightarrow (3)\Rightarrow (4)$:	
	\begin{enumerate}
		\item For every large enough $N$ and every subset $E\subseteq [N]^{m}$, we have $D_{m,N}(V^{m},E)\geq C d_{m,N}(E)^{\ell}$.
		\item For every large enough $N$ and every function $c\colon[N]^{m}\to[0,1]$, we have
		$$\frac{1}{\vert V\cap [N]^{d}\vert^{m}}\sum_{a_{i}\in [N]^{m}, (a_{1},\dots,a_{d})\in V^m}c(a_{1})c(a_{2})\dots c(a_{d})\geq C\Vert c\Vert_{L^{d/\ell}_w}^{d}.$$
		\item For every large enough $N$ and every function $c\colon[N]^{m}\to[0,1]$, we have
		$$\frac{1}{\vert V\cap [N]^{d}\vert^{m}}\sum_{a_{i}\in [N]^{m}, (a_{1},\dots,a_{d})\in V^m}c(a_{1})c(a_{2})\dots c(a_{d})\geq C\left(1-\frac{d}{\ell}\right)^{\ell}\Vert c\Vert_{L^{1}}^{\ell}.$$
		\item  Let $Y=\big(\overline{V}/\mathbb{Z}^{d}\big)^{m}$ be a subtorus of $\mathbb{T}^{d\times m}$. For every measurable function $f\colon \mathbb{T}^m\to[0,1]$,
		$$\int_{Y}f(y_{1})f(y_{2})\dots f(y_{d})\,d\mu_{Y}(y_{1},\dots,y_{d})
		\geq C\left(1-\frac{d}{\ell}\right)^{\ell}
		\left(\int_{\mathbb{T}^m}f\,d\mu_{\mathbb{T}^m}\right)^{\ell}.$$
	\end{enumerate}	
\end{lemma}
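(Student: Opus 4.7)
The plan is to chain the implications via standard techniques: a layer-cake/level-set reduction, interpolation between weak-$L^p$ and $L^\infty$, and rational-subspace equidistribution on the torus.

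First, for (1) $\Leftrightarrow$ (2): the direction (2) $\Rightarrow$ (1) is immediate by specializing to $c = \mathbbm{1}_E$, since a direct computation then gives $\|c\|_{L^{d/\ell}_w}^d = d_{m,N}(E)^\ell$, while the left-hand side of (2) is exactly $D_{m,N}(V^m,E)$. For the converse, given $c\colon[N]^m \to [0,1]$, for each $s>0$ apply (1) to the super-level set $E_s := \{a\in[N]^m : c(a) > s\}$ and use the pointwise bound $c(a_1)\cdots c(a_d) \geq s^d \mathbbm{1}_{E_s}(a_1)\cdots\mathbbm{1}_{E_s}(a_d)$. Summing over $(a_1,\dots,a_d)\in V^m \cap [N]^{d\times m}$ and normalizing by $|V\cap[N]^d|^m$ gives a lower bound of $C s^d d_{m,N}(E_s)^\ell = C\big(s\cdot d_{m,N}(E_s)^{\ell/d}\big)^d$, and taking the supremum in $s$ recovers exactly $C\|c\|_{L^{d/\ell}_w}^d$.

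Next, (2) $\Rightarrow$ (3) follows from the interpolation inequality in \cref{thm:itp} applied with $r=1$ and $p=d/\ell$ (permissible because $\ell>d$). Since $c$ takes values in $[0,1]$, we have $\|c\|_{L^\infty}\leq 1$, and the interpolation gives $\|c\|_{L^1}\leq \frac{\ell}{\ell-d}\|c\|_{L^{d/\ell}_w}^{d/\ell}$. Rearranging and raising to the $d$-th power yields $\|c\|_{L^{d/\ell}_w}^d \geq \big(1-\tfrac{d}{\ell}\big)^\ell \|c\|_{L^1}^\ell$, and substituting into (2) produces (3).

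For (3) $\Rightarrow$ (4), I would discretize. For continuous $f\colon\mathbb{T}^m\to[0,1]$, set $c_N(a) := f(a/N)$ for $a\in[N]^m$; then $\|c_N\|_{L^1}\to\int_{\mathbb{T}^m} f\,d\mu_{\mathbb{T}^m}$ by Riemann sum convergence. Since $V$ is a rational subspace of $\mathbb{Q}^d$, the lattice $V\cap\mathbb{Z}^d$ is cocompact in $\overline{V}$, so the points $\{a/N \bmod \mathbb{Z}^d : a\in V\cap[N]^d\}$ equidistribute on the subtorus $\overline{V}/\mathbb{Z}^d$ as $N\to\infty$. Because the $m$ column constraints on $(a_1,\dots,a_d)\in V^m\cap[N]^{d\times m}$ are independent, this lifts to joint equidistribution of $(a_1/N,\dots,a_d/N)$ on $Y=(\overline{V}/\mathbb{Z}^d)^m$, so the left-hand side of (3) converges to $\int_Y \prod_{i=1}^d f(y_i)\,d\mu_Y$. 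Passing to the limit in (3) gives (4) for continuous $f$; a standard $L^1$-approximation then extends the inequality to all measurable $f\colon\mathbb{T}^m\to[0,1]$, using multilinearity together with the fact that each marginal of $\mu_Y$ is the Haar measure on $\mathbb{T}^m$ (which holds because $(1,\dots,1)\in V$ forces each coordinate projection $\overline{V}/\mathbb{Z}^d\to\mathbb{T}$ to be surjective). The main technical step is the equidistribution in this last implication, but it is standard once one identifies the closure of $V$ in $\mathbb{T}^d$ with $\overline{V}/\mathbb{Z}^d$; the earlier implications are essentially bookkeeping.
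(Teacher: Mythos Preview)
Your handling of $(1)\Leftrightarrow(2)\Rightarrow(3)$ is correct and coincides with the paper's proof: the same level-set reduction for $(1)\Rightarrow(2)$, and the same application of the interpolation lemma (\cref{thm:itp} with $r=1$, $p=d/\ell$) for $(2)\Rightarrow(3)$. For $(3)\Rightarrow(4)$ the paper likewise discretizes, using box-averages $c(a)=N^m\int_{[-1/N,0)^m}f(a/N+x)\,dx$ rather than your point-samples $c(a)=f(a/N)$, so that the left side of $(3)$ equals the integral of $\prod_i f(y_i)$ against a box-smoothed probability measure $\mu_N$ on $\mathbb{T}^{dm}$; it then asserts that $\mu_{N!}\to\mu_Y$, arguing that $\mu_{N!}$ is invariant under every rational $v\in(V/\mathbb{Z}^d)^m$ once $N$ exceeds the denominators of $v$.

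There is, however, a genuine gap in the equidistribution step, and it affects the paper's argument as well as yours. Take $d=3$, $m=1$, $V=\{(x_1,x_2,x_3):x_1-2x_2+x_3=0\}$ (which contains $(1,1,1)$), and parametrize $\overline V/\mathbb{Z}^3\cong\mathbb{T}^2$ via $(s,t)\mapsto(s,s+t,s+2t)$. A point $a\in V\cap[N]^3$ gives $s=a_1/N$ and $t=(a_2-a_1)/N$; the constraint $a_3=2a_2-a_1\in[0,N)$ forces $t\in[-s/2,(1-s)/2)$, an interval of length $\tfrac12$ for each $s$. Hence the normalized counting measure on $\{a/N:a\in V\cap[N]^3\}$ (and likewise the paper's $\mu_N$) converges to a measure supported on only \emph{half} of the subtorus, not to $\mu_Y$; concretely, translating by $v=(\tfrac12,0,\tfrac12)\in V/\mathbb{Z}^3$ sends the box indexed by $a$ to one indexed by $a'$ with $a'_1-2a'_2+a'_3=\pm N\neq0$, so the paper's invariance claim fails too. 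One way to repair the argument is a two-scale discretization: fix $M$, apply $(3)$ at scales $N=kM$ to the $M$-periodic function $c(a)=f(a/M)$; as $k\to\infty$ the left side becomes an average of $\prod_i f(r_i/M)$ over residues $r\in(\mathbb{Z}/M\mathbb{Z})^{d}$ satisfying the defining equations of $V$ modulo $M$, and then $M\to\infty$ recovers $\int_Y\prod_i f(y_i)\,d\mu_Y$.
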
	

\begin{remark*}
	\begin{itemize}
		\item
		Whenever we have a point $x$ in $[N]^{dm}$ (or analogously for $\Q^{dm}$, $\T^{dm}$, etc.) we consider $x=(x_{i,j})_{i=1,\dots,m,\,j=1,\dots,d}$ with each $x_{i,j}\in[N]$.
		We then write $x=(x_1,\dots,x_d)$ where each $x_i\in[N]^m$ is the vector $x_i=(x_{i,j})_{j=1}^m$.
		Depending on the context, we may also write $x=(x_1,\dots,x_m)$, where now each $x_j\in[N]^d$ is the vector $x_j=(x_{i,j})_{i=1}^d$ (it should be clear at any point which vectors we are referring to).
		
		For instance, if $ v=(v_{i,j})_{i=1,\dots,m,\,j=1,\dots,d}\in\Q^{dm}$, then $v$ is in $V^m$ if for every $i$ the vector $(v_{i,j})_{j=1}^d$ of $\Q^d$ belongs to $V$; and $v$ is in $E^d$ if for every $j$ the vector $(v_{i,j})_{i=1}^m$ is in $E$.
		Similarly in (2) and (3), the statement that $(a_1,\dots,a_d)\in V^m$ should be interpreted by writing each $a_j$ as $(a_{i,j})_{i=1}^m$ and requiring that each vector $(a_{i,j})_{j=1}^d$ is in $V$.
		\item It might be true that (3) and (4) are also equivalent to (1) and (2), but we didn't find a proof, and that direction is not needed in this paper.
	\end{itemize}
\end{remark*}

\begin{proof}
	(2)$\Rightarrow$ (1). Take $c(a)=\one_{E}(a)$.
	
	(1)$\Rightarrow$ (2). Let $p:=d/\ell$, observe that $\|c\|_{L^{p}_w}=\frac1{N^{m\ell/d}}\sup_{s\geq0}s\vert\{a\in[N]^{m}\colon c(a)>s\}\vert^{\ell/d}$ and assume that the maximum is obtained at $s=t$.
	Let $E=\{a\in[N]^{m}\colon c(a)>t\}$.
	Since $c\geq t\one_{E}$, we have
	$$\frac{1}{\vert V\cap [N]^{d}\vert^{m}}\sum_{a_{i}\in [N]^{m}, (a_{1},\dots,a_{d})\in V}c(a_{1})c(a_{2})\dots c(a_{d})
	\geq
	t^{d}D_{m,N}(V^{m},E).$$
	Invoking (1), we get
	$t^{d}D_{m,N}(V^{m},E)\geq C\frac{t^{d}\vert E\vert^{\ell}}{N^{\ell m}}=C\Vert c\Vert^{d}_{L^{p}_w}.$
	
	(2)$\Rightarrow$ (3). We only need to show (3) for $c\neq 0$.
	By \cref{thm:itp},
	$$\Vert c\Vert^{d}_{L^{p}_w}\geq
	\left(1-\frac{d}{\ell}\right)^{\ell}\frac{\Vert c\Vert_{L^{1}}^{\ell}}{\Vert c\Vert_{L^{\infty}}^{\ell-d}}
	\geq
	\left(1-\frac{d}{\ell}\right)^{\ell}\Vert c\Vert_{L^{1}}^{\ell}.
	$$

	(3)$\Rightarrow$(4).
	Let
	$$Y_N=\bigcup_{a\in(V\cap[N]^d)^m}\prod_{i=1}^d\prod_{j=1}^m\left[\frac{a_{i,j}-1}N,\frac{a_{i,j}}N\right)\subset\T^{dm}$$
	and let $\mu_N$ be the normalized probability measure supported on $Y_N$.
	We claim that $\mu_{N!}\to\mu_Y$ as $N\to\infty$.
	Indeed, any limit point of the sequence $(\mu_N)_{N\in\N}$ must be supported on $Y$.
	Moreover, for any $v\in(V/\Z^d)^m$, if $N$ is larger than the denominators of all coordinates of $v$, then $\mu_{N!}$ is invariant under $v$.
	We conclude that any limit point of the sequence $(\mu_{N!})_{N\in\N}$ is supported on $Y$ and invariant under $Y$, hence it must be $\mu_Y$.
	
	Now given $f\colon\T^m\to[0,1]$, let $c\colon [N]^m\to[0,1]$ be the function
	$$c(a)=N^m\int_{[-1/N,0)^m}f(a/N+x)dx.$$
	When $N$ is large enough, we have that
	\begin{eqnarray*}&
		\displaystyle\int_{Y_N}\prod_{i=1}^df(y_i)\,d\mu_N(y_1,\dots,y_d)
		=
		\frac{N^{dm}}{|V\cap[N]^d|^m}\sum_{a\in(V\cap[N]^d)^m}\int_{\left[\tfrac{-1}N,0\right)^{dm}}\prod_{i=1}^df\left(\frac{a_i}N+x_i\right)d(x_1,\dots,x_d)
		\\ & \geq\displaystyle
		\frac{1}{|V\cap[N]^d|^m}\sum_{a_{i}\in [N]^{m}, (a_{1},\dots,a_{d})\in V^m}c(a_{1})c(a_{2})\dots c(a_{d})\\&\geq C\left(1-\frac{d}{\ell}\right)^{\ell}\Vert c\Vert_{L^{1}}^{\ell}
		=C\left(1-\frac{d}{\ell}\right)^{\ell}\left(\int_{\T^m} f\,d\mu_{\T^m}\right)^\ell.
	\end{eqnarray*}
\end{proof}
\medbreak

We are now ready to prove Theorem \ref{thm:integerdirect}.
\begin{proof}[Proof of Theorem \ref{thm:integerdirect}.]
Fix an ergodic system  $(X,\mathcal{B},\mu,T)$ and a set $A\in\mathcal{B}$ with $\mu(A)>0$.
    Let $Z_{2}$ be the 2-step nilfactor of $X$.
	Using a standard approximation argument, we can assume that $Z_{2}$ is a 2-step nilsystem $(G/\Gamma,\mu_{Z_2},\tau)$ with $\tau\in G$. By abuse of notation, we use $Z_2$ to denote the system as well the underlying nilmanifold $G/\Gamma$.


	In view of ergodicity, the topological system $(Z_2, \tau)$ is minimal (see, for instance, \cite[Theorem 4.1.1]{Bergelson_Host_Kra05}).
	We can assume that $G$ is generated by the connected component of the identity and $\tau$.
	Indeed, the projection of the connected component of $G$ onto $Z_2$ is an open subset of $Z_2$ (as its pre-image under the natural map $G\to Z_2$ is the union of all connected components of $G$ having non-empty intersection with $\Gamma$ and hence it is open), and by minimality, its orbit under $\tau$ is all of $Z_2$. Therefore, if we let $\tilde G$ be the subgroup of $G$ generated by the connected component of the identity and $\tau$, it follows that $Z_2=\tilde G/(\Gamma\cap\tilde G)$.
	
	Since $G$ is a $2$-step nilpotent group, the commutator $G_{2}=[G,G]$ is inside the center of $G$, and hence the subgroup $\Gamma_{2}=G_{2}\cap \Gamma$ is normal in $G$.
	Therefore $Z_2=(G/\Gamma_{2})/(\Gamma/\Gamma_{2})$ and thus after modding out by $\Gamma_2$ we can assume that $G_2\cap\Gamma=\{1\}$, which implies that $G_{2}$ is a compact abelian Lie group.
	From \cite[Theorem 4.1.4]{Bergelson_Host_Kra05}, it follows that $G_{2}$ is connected, and so $G_{2}$ must be a finite dimensional torus.
	
	Let $Z_1:= G/(G_2 \Gamma)$ be the Kronecker factor of $Z_2$ and note that it is also a compact abelian Lie group. For $\delta > 0$, define $S_{\delta}$ as in \eqref{equ:s}.
	It now suffices to show that
	\begin{equation}\label{equ:ZLF1}
	\begin{split}
	\lim_{\delta\to 0}\lim_{N-M\to\infty}\frac{1}{\vert S_{\delta}\cap[M,N)\vert}\sum_{n\in S_{\delta}\cap[M,N)}\int_{X}\prod_{i=1}^4f(T^{a_{i}n}x)\, d\mu(x)\geq C\left(1-\frac4{\ell}\right)^{\ell}\left(\int_{X} f\,d\mu\right)^{\ell}
	\end{split}
	\end{equation}
	for all $0\leq f\leq 1$.
	By \cref{proposition:characteristic-factor-along-bohr-set}, the left hand side of (\ref{equ:ZLF1}) is 0 if we replace at least one of the four $f$'s with $f-\mathbb{E}(f\vert{Z_{2}})$. 	
	Hence it suffices to prove (\ref{equ:ZLF1}) under the assumption that $X=Z_2=G/\Gamma$.
	
	Since $G$ is 2 step nilpotent,
	by \cref{prop:sz}, the left hand side of (\ref{equ:ZLF1}) equals to
	\begin{equation}\label{123}
	\int_{Z_2}\int_{G_2}\int_{G_2} \prod_{i=1}^4f\left(g g_1^{a_i \choose 1} g_2^{a_i \choose 2} \Gamma\right)\, d\mu_{G_{2}}(g_2)\, d\mu_{G_{2}}(g_1)\,d\mu_{Z_2}(g\Gamma),
	\end{equation}
	where $\mu_{X}$ and $\mu_{G_{2}}$ are the Haar measures on $X$ and $G_{2}$ respectively.
	Recall that $G_{2}$ is a torus, say $G_2=\mathbb{T}^{m}$.
	Consider the subgroup
	$$Y:=\left\{(y_{1},\dots,y_{4})\in(\mathbb{T}^m)^4\colon(\exists g_1,g_2\in\T^m)\ y_i=\binom{a_i}1g_1+\binom{a_i}2g_2\right\}\subset\T^{4m},$$
	where we now use the additive notation.
	Then we may rewrite
	$$\eqref{123}=\int_{Z_2}\int_{Y}\prod_{i=1}^4f(gy_{i}\Gamma)\,d\mu_{Y}(y_{1}y_2,y_3,y_{4})\,d\mu_{Z_2}(g\Gamma),$$
	where $\mu_Y$ is the Haar measure on $Y$.
	We can also describe $Y$ in terms of $V$ as $Y=\big(\overline{V}/\mathbb{Z}^{4}\big)^{m}$, where $\overline{V}$ is the closure $V$ in $\R^4$ (or, equivalently, its $\mathbb{R}$-span).

	For each $g\in G$, let $f_{g}\colon G_{2}\to\mathbb{R}$ be the function defined by the formula $f_{g}(g_{2}\Gamma)=f(gg_{2}\Gamma)$ for all $g_{2}\in G_{2}$. 	
	Then by \cref{lem:equ}, $(1)\Rightarrow (4)$, and then Jensen's inequality, we conclude that
	\begin{equation}\nonumber
	\begin{split}
	&\quad\int_{Z_2}\int_{Y}\prod_{i=1}^4f(gy_{i}\Gamma)\,d\mu_{Y}(y_{1}y_2,y_3,y_{4})\,d\mu_{Z_2}(g\Gamma)
	\geq C\left(1-\frac{4}{\ell}\right)^{\ell}\int_{Z}\left(\int_{G_{2}}f_{g}\,d\mu_{G_{2}}\right)^{\ell}\,d\mu_{Z}(g\Gamma)
	\\&\geq C\left(1-\frac{4}{\ell}\right)^{\ell}\left(\int_{Z_2}\int_{G_{2}}f_{g}\,d\mu_{G_{2}}\,d\mu_{Z_2}(g\Gamma)\right)^{\ell}
	=C\left(1-\frac{4}{\ell}\right)^{\ell}\left(\int_{Z_2}f\,d\mu_{Z_2}\right)^{\ell}.
	\end{split}
	\end{equation}

\end{proof}

\subsection{Optimal recurrence implies abundance of solutions for \texorpdfstring{$\mathbf{k = 4}$}{k=4}}
\label{sec_proofintegerexample}

In this subsection we prove Theorem \ref{thm:integerinverse}.
We need the following well known equidistribution result whose short proof we include for completeness.

\begin{lemma}\label{lem:eqd1}
	For every Bohr$_{0}$ set $S$, every $\a\in\mathbb{R}^{m}$ whose coordinates are rationally independent, and every cube $I\subseteq\mathbb{T}^{m}$,
	\[
	    \lim_{N-M\to\infty}\frac{\big\vert\big\{n\in S\cap [M,N)\colon n^{2}\a\bmod\mathbb{Z}^{m}\in I\big\}\big\vert}{\vert S\cap [M,N)\vert}=\mu_{\mathbb{T}^m}(I).
	\]
\end{lemma}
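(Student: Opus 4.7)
The plan is to establish joint well-distribution of the sequence $\bigl((n\beta, n^2\alpha)\bigr)_{n\in\N}$ on $Z\times\mathbb{T}^{m}$ with respect to the product of Haar measures, where $Z$ is the compact abelian group and $\beta\in Z$ are such that $S=\{n\in\N\colon n\beta\in B(\delta)\}$. Once joint well-distribution is known, the statement of the lemma follows by writing the quotient as
\[
\frac{\frac{1}{N-M}\sum_{n=M}^{N-1}\one_{B(\delta)}(n\beta)\one_{I}(n^{2}\alpha)}{\frac{1}{N-M}\sum_{n=M}^{N-1}\one_{B(\delta)}(n\beta)},
\]
approximating $\one_{B(\delta)}$ and $\one_{I}$ by continuous functions (both sets have boundaries of Haar measure zero, so they are Riemann integrable), and using that the numerator tends to $\mu_{Z}(B(\delta))\,\mu_{\mathbb{T}^{m}}(I)$ and the denominator to $\mu_{Z}(B(\delta))>0$.

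To prove joint well-distribution, I would use the Weyl criterion. By Stone--Weierstrass it suffices to show that for every character $\chi$ of $Z$ and every $k\in\Z^{m}$, not both trivial, one has
\[
\lim_{N-M\to\infty}\frac{1}{N-M}\sum_{n=M}^{N-1}\chi(n\beta)\,e^{2\pi i\,k\cdot n^{2}\alpha}=0.
\]
If $k=0$ and $\chi$ is nontrivial, this reduces to the standard well-distribution of the linear orbit $(n\beta)$ in the closure of $\langle\beta\rangle\subseteq Z$. If $k\neq 0$, writing $\chi(n\beta)=e^{2\pi i\gamma n}$ for some $\gamma\in\R$ depending on $\chi$ and $\beta$, the average becomes
\[
\frac{1}{N-M}\sum_{n=M}^{N-1}e^{2\pi i\bigl((k\cdot\alpha)n^{2}+\gamma n\bigr)}.
\]
The hypothesis that the coordinates of $\alpha$ are rationally independent (in the sense that $1,\alpha_{1},\dots,\alpha_{m}$ are linearly independent over $\Q$) ensures that $k\cdot\alpha$ is irrational whenever $k\neq 0$; Weyl's equidistribution theorem for polynomial sequences then gives that this exponential sum is $o(1)$, and in fact uniformly in $M$, which is exactly the well-distribution we need.

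There is essentially no serious obstacle: both ingredients (equidistribution of $n\beta$ on $Z$ and Weyl's theorem for quadratic exponential sums) are classical. The only point requiring care is the interpretation of ``rationally independent,'' which must be read as $1,\alpha_{1},\dots,\alpha_{m}$ being $\Q$-linearly independent so that the leading coefficient $k\cdot\alpha$ in the Weyl sum is guaranteed irrational for every nonzero $k\in\Z^{m}$.
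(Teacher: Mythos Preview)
Your proposal is correct and follows essentially the same approach as the paper: both reduce to showing that $(n\beta,n^{2}\alpha)$ is well distributed on $Z\times\mathbb{T}^{m}$, verify this via the Weyl criterion by splitting into the cases $k=0$ (handled by density of $\langle\beta\rangle$ in $Z$, i.e.\ a geometric-series argument) and $k\neq 0$ (handled by Weyl's theorem for quadratic polynomials with irrational leading coefficient), and then pass to the indicator functions via Riemann integrability. Your remark that ``rationally independent'' must be read as $1,\alpha_{1},\dots,\alpha_{m}$ being $\Q$-linearly independent is exactly the point needed for the $k\neq 0$ case and is used implicitly in the paper as well.
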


\begin{proof}
	By assumption, we can write $S=\{n\in\mathbb{N}\colon nx\in U\}$, where $K$ is a compact abelian group, $U\subseteq K$ is a neighborhood of $1_{K}$ such that $\one_{U}$ is Riemann integrable, and $x\in U$ is a point such that $\overline{\{nx\colon n\in\Z\}}=K$ and $S=\{n\in\mathbb{N}\colon nx\in U\}$.
	Then it suffices to show that as $N-M\to\infty$,
	
	\[	\frac{\vert\{n\in S\cap [M,N)\colon n^{2}\a\in I\}\vert}{N-M} =	\frac{\vert\{n\in [M,N)\colon  (nx,n^2\a)\in U\times I\}\vert}{N-M}   \]
	converges to $\mu_K(U)\times \mu_{\mathbb{T}^{m}}(I)=\mu_{K\times\mathbb{T}^{m}}(U\times I)$, where $\mu_K$, $\mu_{\mathbb{T}^{m}}$ and $\mu_{K\times\mathbb{T}^{m}}$ are the Haar measures on $K$, $\mathbb{T}^{m}$ and $K\times\mathbb{T}^{m}$, respectively. This follows once we show that the sequence $(nx,n^2\a)_{n\in\N}$ is well distributed on $K\times\T^{m}$.
	Since $\one_{U}$ is Riemann integrable, it suffices to show that for every character $\chi\colon K\to S^1\subset\C$ of $K$ and every $\mathbf{b}\in\Z^{m}$, if either $\chi$ is non-trivial or $\mathbf{b}\neq\mathbf{0}$, then
	\begin{equation}\label{eq_welldistributedlemma}
	\lim_{N-M\to\infty}\frac1{N-M}\sum_{n=M}^{N-1}e^{2\pi i\mathbf{b}\cdot\a n^{2}}\chi(x)^n=0.
	\end{equation}
	Let $\theta\in\T$ be such that $\chi(x)=e^{2\pi i\theta}$.
	Since the group generated by $x$ is dense in $K$, $\theta\notin\Z$ unless $\chi$ is trivial.
	Then we can write $e^{2\pi i\mathbf{b}\cdot\a n^{2}}\chi(x)^n=e^{2\pi i(n^2\mathbf{b}\cdot\a+n\theta)}$.
	If $\mathbf{b}\neq\mathbf{0}$ then \eqref{eq_welldistributedlemma} follows from Weyl's equidistribution theorem, and if $\mathbf{b}=\mathbf{0}$ then $\theta\notin\Z$ and \eqref{eq_welldistributedlemma} follows quickly from evaluating the resulting geometric series.
\end{proof}

\begin{proof}[Proof of Theorem \ref{thm:integerinverse}]

	Let $\ell,a_{1},\dots,a_{4},C$ and $V$ be as in the statement of the theorem.
Since $V$ has a basis of rational vectors, there exists a positive constant $\epsilon$ such that any point $\x\in\Z^4$ at a distance (say in the $\ell^\infty$ norm) less than $\epsilon$ from the closure $\overline{V}\subset\R^4$ must in fact belong to $V$.
Fix $m,N_0\in\N$ and $E\subset[N_0]^m$.

	Let $X=\mathbb{T}^{2m}$ be the $2m$ dimensional torus endowed with the Lebesgue measure $\mu$. Define the map $T\colon X\to X$ via the formula
	$T(\x,\y)=(\x+\a,\y+2\x+\a), \x,\y\in\mathbb{T}^{m}$ for some $\a\in\mathbb{T}^{m}$ whose coordinates are rationally independent.
	Then $(X,\mathcal{B},\mu,T)$ is ergodic, and for each $n\in\N$,
	\begin{equation}\nonumber
	\begin{split}
	T^{n}(\x,\y)=(\x+n\a,\y+2n\x+n^{2}\a).
	\end{split}
	\end{equation}
	For $\i=(c_{1},\dots,c_{m})\in[N_0]^{m}$, denote
	$$B_{\i}= \left[\frac{c_{1}}{N_0},\frac{c_{1}+\epsilon}{N_0}\right) \times\dots\times\left[\frac{c_{m}}{N_0},\frac{c_{m}+\epsilon}{N_0}\right),\qquad A=\bigcup_{\i\in E}\T^{m}\times B_{\i}.$$
	We can directly compute that $\mu(A)=\frac\epsilon{N_0^{m}}|E|=\epsilon^md_{m,N_0}(E)$.
	Let $(\x,\y)\in\T^{2m}$ and $n\in\N$.
	Note that for all $1\leq j\leq 4$, $T^{a_{j}n}(\x,\y)\in A$ if and only if
	\begin{equation}\label{equ:ex3}
	\begin{split}
	u_{j}=u_j(\x,\y,n):=\y+2a_{j}n\x+a^{2}_{j}n^{2}\a\in B_{\i_{j}}
	\end{split}
	\end{equation}
	for some $\i_{j}\in E$.
	Fix such a point $(\x,\y,n)\in\T^{2m}\times\N$. Then the vector
	\begin{equation}\nonumber
	\begin{split}
	(u_{1},\dots,u_{4})=\y(1,\dots,1)+2n\x(a_{1},\dots,a_{4})+ n^{2}\a(a_{1}^{2},\dots,a_{4}^{2})\in B_{\i_{1}}\times\dots\times B_{\i_{4}}
	\end{split}
	\end{equation}
	belongs to the closure in $\R^{m\times 4}$ of $V^{m}$.
	Since $N_0u_{j}$ is at most $\epsilon$ away from the integer vector $\i_{j}$ (in the $\ell^\infty([m])$ distance), from the definition of $\epsilon$ we deduce that $(\i_{1},\dots,\i_{4})$ belongs to $V^{m}$ as well.
	Let $W$ denote the collection of all tuples $(\i_{1},\dots,\i_{4})\in V^{m}$ with $\i_{j}\in E$.
	By definition, $D_{m,N_0}(V^{m},E)=\frac{\vert W\vert}{\vert V\cap[N_0]^{4}\vert^{m}}$.
	
	By the discussion above,
	\begin{equation}\label{equ:ex4}
	\mu(T^{a_{1}n}A\cap\dots\cap T^{a_{4}n}A)
	=
	\sum_{(\i_{1},\dots,\i_{4})\in W} \int_{X}\prod_{j=1}^{4}\one_{B_{\i_{j}}}\big(u_{j}(\x,\y,n)\big)\,d\mu(\x,\y).
	\end{equation}

	Fix $(\i_{1},\dots,\i_{4})\in W$.
If $n\in\N$ is such that (\ref{equ:ex3}) holds for all $1\leq i\leq 4$, then considering (\ref{equ:ex3}) as a linear equation system with $4m$ equations and the coordinates of $\y,n\x,n^{2}\a$ as unknowns (i.e. $3m$ unknowns in total)
	, we deduce that there exist 3 cubes $I_{1}, I_{2}$ and $I_{3}$ in $\T^{m}$ with side length at most $\frac{1}{N_0}$ such that $\y\in I_{1},n\x\in I_{2},n^{2}\a\in I_{3}$.

	\cref{lem:eqd1} implies that for any Bohr$_{0}$ set $S$,
	$$\lim_{N-M\to\infty}\frac{\vert\{n\in S\cap [M,N)\colon n^{2}\a\in I_{3}\}\vert}{\vert S\cap [M,N)\vert}=\mu(I_{3})\leq\frac{1}{N_0^{m}}.$$
	Since $\mu(I_{1})\leq \frac{1}{N_0^{m}}$ and $\mu(\{\x\in\mathbb{T}^{m}\colon n\x\in I_{2}\})=\mu(I_{2})\leq \frac{1}{N_0^{m}}$ for any $n\neq 0$, by \eqref{equ:ex4} we conclude that
	\begin{equation}\nonumber
	\begin{split}
	&\quad D_{m,N_{0}}(V^{m},E)=\sum_{(\i_{1},\dots,\i_{4})\in W}\frac{1}{\vert V\cap [N_{0}]^{4}\vert^{m}}
	\\&\geq \frac{N^{3m}_{0}}{\vert V\cap [N_{0}]^{4}\vert^{m}}\sum_{(\i_{1},\dots,\i_{4})\in W}\limsup_{N-M\to\infty}\frac{1}{\vert S\cap [M,N)\vert}\sum_{n\in S\cap [M,N)}\int_{X}\prod_{j=1}^{4}\one_{B_{\i_{j}}}(u_{j}(\x,\y,n))\,d\mu(\x,\y)
	\\&\geq \frac{N^{3m}_{0}}{\vert V\cap [N_{0}]^{4}\vert^{m}}\limsup_{N-M\to\infty}\frac{1}{\vert S\cap [M,N)\vert}\sum_{n\in S\cap [M,N)}\mu(T^{a_{1}n}A\cap\dots\cap T^{a_{d}n}A)
	\\&\geq C\frac{N^{3m}_{0}}{\vert V\cap [N_{0}]^{4}\vert^{m}}\mu(A)^{\ell}=C\frac{\epsilon^{m\ell}N^{3m}_{0}}{\vert V\cap [N_{0}]^{4}\vert^{m}}d_{m,N_0}(E)^\ell,
	\end{split}
	\end{equation}
	which finishes the proof by taking $\beta<\lim\limits_{N\to\infty}\frac{\epsilon^{\ell} N_{0}^3}{\vert V\cap [N_{0}]^{4}\vert}$.
\end{proof}

\section{Optimal recurrence along families of polynomials}\label{s5}

\subsection{Proof of \texorpdfstring{\cref{proposition:globn2connected}}{Proposition 1.12}}
In this section, we prove a generalization of \cref{proposition:globn2connected}.
In order to state the result, we need to introduce a notion defined and studied by Leibman in \cite{Leibman10b}. The \emph{$C$-complexity} ($C$ stands for \emph{connected}) of a family of integer-valued polynomials $\{p_1,\ldots,p_d\}$ is the minimum $k \in \Z$ for which the factor $Z_k$ is characteristic for this family in an ergodic nilsystem $(G/\Gamma,\mathcal{B},\mu,T)$ with $G$ connected.

Note that this notion is different from the standard complexity as the latter is defined for general ergodic systems instead of nilsystems with connected Lie groups.

Let $P = \{p_1, p_2, p_3\}$ be a family of $3$ integer-valued polynomials. Following \cite[Section 8]{Leibman10b}, the $C$-complexity of $P$ is $1$ if
\begin{itemize}
    \item $p_1, p_2, p_3$ are linearly independent over $\Q$ (the standard complexity is also $1$ in this case (see \cite{Frantzikinakis_Kra05b})),
    \item or there exist two linearly independent over $\Q$ polynomials  $q_1, q_2$ such that $p_1 = a q_1$, $p_2 = b q_2$ and $p_3 = c q_1 + d q_2$ for some $a, b, c, d \in \Z$.
\end{itemize}
For the other remaining case where $p_1 = a q, p_2 = b q$ and $p_3 = c q$ for some non-constant polynomial $q$ and $a, b, c$ distinct non-zero, the $C$-complexity of $P$ is $2$.

\begin{lemma} \label{proposition:Bohr-Poly-Average}
	\label{proposition:characteristic-factor-along-bohr-set-poly}
	Let $(G/\Gamma, \mathcal{B}, \mu, T)$ be an ergodic nilsystem where $G$ is connected.
	Let $Z_1$ be its Kronecker factor and $\alpha\in Z_1$ be the rotation induced by $T$.
	Suppose $q_1$ and $q_2$ are two integer-valued polynomials that are linearly independent over $\mathbb{Q}$ and both have $0$ constant terms. Let $p_1=aq_1$, $p_2=bq_2$ and $p_3=cq_1+dq_2$ for $a,b,c,d \in \mathbb{Z}$.
	
	For $\delta>0$, let $B_{\delta}$ be the ball in $Z_1$ centered at $0$ of radius $\delta$ and define $S_{\delta} = \{n\in \mathbb{N}: (q_1(n)\alpha,q_2(n)\alpha)\in B(\delta)\times B(\delta)\}$. Let $f_1, f_2, f_3 \in L^{\infty}(\mu)$ and assume $\mathbb{E}(f_i | Z_1) = 0$ for some $1 \leq i \leq 3$. Then
	\begin{equation}
	\label{equation:char-bohr-poly}
	\lim_{N - M \to \infty} \frac{1}{|S_{\delta} \cap [M, N)|} \sum_{n \in S_{\delta} \cap [M,N)} f_1(T^{p_1(n)} x) f_2(T^{p_2(n)}x ) f_3(T^{p_3(n)}x) = 0,
	\end{equation}
	where the limit is taken in $L^{2}(\mu)$.
\end{lemma}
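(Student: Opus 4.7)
The plan is to follow the structure of the proof of \cref{proposition:characteristic-factor-along-bohr-set}, replacing linear occurrences with polynomial ones and introducing a tensor-product construction that absorbs the Bohr-set weights into the dynamics. First, since $\one_{B(\delta)}$ is Riemann integrable on $Z_1$, the function $\one_{B(\delta)\times B(\delta)}$ on $Z_1\times Z_1$ can be approximated in $L^1$ by finite linear combinations of products $\chi_1\otimes\chi_2$ of characters of $Z_1$. Expanding $\one_{S_\delta}(n)=\one_{B(\delta)\times B(\delta)}(q_1(n)\alpha,q_2(n)\alpha)$ accordingly reduces the claim to showing that, for every pair of characters $\chi_1,\chi_2$ of $Z_1$,
\[
\lim_{N-M\to\infty}\frac{1}{N-M}\sum_{n=M}^{N-1}\chi_1(q_1(n)\alpha)\chi_2(q_2(n)\alpha)\prod_{j=1}^{3}f_j(T^{p_j(n)}x)=0\qquad\text{in }L^2(\mu).
\]

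Second, I would absorb these character weights into the dynamics via a tensor construction. Because $G$ is connected, $Z_1=G/(G_2\Gamma)$ is a connected compact abelian Lie group, so multiplication by $a$ and by $b$ are surjective on $Z_1$; choose $\beta_1,\beta_2\in Z_1$ with $a\beta_1=\alpha$ and $b\beta_2=\alpha$. Set
\[
Y:=X\times Z_1\times Z_1,\qquad \tilde T:=T\times\beta_1\times\beta_2,\qquad \tilde\mu:=\mu\times\mu_{Z_1}\times\mu_{Z_1},
\]
and define $F_1:=f_1\otimes\chi_1\otimes 1$, $F_2:=f_2\otimes 1\otimes\chi_2$, $F_3:=f_3\otimes 1\otimes 1$. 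Using $p_1(n)\beta_1=q_1(n)\alpha$ and $p_2(n)\beta_2=q_2(n)\alpha$ together with multiplicativity of the characters, a direct computation gives
\[
\prod_{j=1}^{3}\tilde T^{p_j(n)}F_j(x,z_1,z_2)=\chi_1(z_1)\chi_2(z_2)\cdot\chi_1(q_1(n)\alpha)\chi_2(q_2(n)\alpha)\prod_{j=1}^{3}f_j(T^{p_j(n)}x).
\]
Since $|\chi_1(z_1)\chi_2(z_2)|=1$, vanishing of the previous average in $L^2(\mu)$ is equivalent to the vanishing, in $L^2(\tilde\mu)$, of the polynomial ergodic average of $F_1,F_2,F_3$ on $(Y,\tilde T)$.

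Third, the system $(Y,\tilde T)$ is a nilsystem whose underlying Lie group $G\times Z_1\times Z_1$ is connected, and the family $\{p_1,p_2,p_3\}$ has $C$-complexity $1$ by its assumed shape. By the polynomial characteristic-factor theorem of \cite{Leibman10b}, the Kronecker factor $Z_1(Y_t)$ is characteristic for this polynomial average on each ergodic component $Y_t$ of $(Y,\tilde T)$, so it suffices to check that $\mathbb{E}(F_i|Z_1(Y_t))=0$ for almost every $t$. But the Kronecker factor of $Y$ equals $Z_1(X)\times Z_1\times Z_1$ (the Kronecker factor of a product is the product of Kronecker factors), and the tensor form of $F_i$ combined with the hypothesis $\mathbb{E}(f_i|Z_1(X))=0$ forces $\mathbb{E}(F_i|Z_1(Y))=0$, from which the required vanishing on almost every ergodic component follows.

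The main obstacle is the passage to ergodic components in the third step: $(Y,\tilde T)$ need not be ergodic, so one must verify both that $\{p_1,p_2,p_3\}$ still has $C$-complexity $1$ on each component $Y_t$ (which is itself a nilsystem, though possibly with disconnected Lie group) and that $\mathbb{E}(F_i|Z_1(Y))=0$ propagates to $\mathbb{E}(F_i|Z_1(Y_t))=0$ on almost every $t$. An alternative that sidesteps this issue is to work directly with the Host--Kra seminorm $\lVert\cdot\rVert_{U^2(Y,\tilde T)}$ on the whole system: the identification $Z_1(Y)=Z_1(X)\times Z_1\times Z_1$ makes $\lVert F_i\rVert_{U^2(Y)}=0$ an immediate consequence of $\mathbb{E}(f_i|Z_1(X))=0$, and then Leibman's polynomial Host--Kra theorem applied on $Y$ concludes the argument.
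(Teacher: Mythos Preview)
Your proposal follows essentially the same route as the paper: reduce the Bohr-set weight to character weights $\chi_1(q_1(n)\alpha)\chi_2(q_2(n)\alpha)$, absorb these into the dynamics via the product system $Y=X\times Z_1\times Z_1$ with $\tilde T=T\times(\alpha/a)\times(\alpha/b)$, and then invoke the $C$-complexity~$1$ characteristic-factor result on ergodic components. Your placement of the characters (attaching $\chi_1$ to $f_1$ and $\chi_2$ to $f_2$) is correct and matches the paper's intent.

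You correctly flag the main obstacle---that the ergodic components $Y_t$ might a priori only be representable as $G_t/\Gamma_t$ with $G_t$ disconnected, so Leibman's $C$-complexity theorem does not apply directly---but neither of your proposed resolutions closes the gap. Your alternative of working with $\lVert F_i\rVert_{U^2(Y)}=0$ on the whole system does not sidestep the issue: this seminorm condition is equivalent to $\mathbb{E}(F_i\mid Z_1(Y_t))=0$ for a.e.\ $t$, but you still need $Z_1(Y_t)$ to be \emph{characteristic} for the polynomial average $\{p_1,p_2,p_3\}$ on $Y_t$, and that is exactly where connectedness of $G_t$ is required.

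The paper supplies the missing step by a nontrivial argument. Using (a special case of) Ratner's theorem, the closure $\widehat Y_t$ of the $\mathbb{R}$-flow $(t\cdot\tilde T)_{t\in\mathbb{R}}$ through a point of $Y_t$ is a homogeneous space $G_t/\Gamma_t$ with $G_t$ a \emph{connected} closed subgroup of $G\times Z_1\times Z_1$. One then shows $\widehat Y_t=Y_t$ by projecting to the abelianization $G_t/([G_t,G_t]\Gamma_t)$ and observing that the $\mathbb{Z}$-orbit closure of $(\alpha,\alpha/a,\alpha/b)$ in $Z_1^3$ coincides with its $\mathbb{R}$-orbit closure. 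This gives each $Y_t$ a connected-group representation, after which the $C$-complexity argument goes through exactly as you outlined.
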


\begin{proof} The proof is similar to that of \cref{proposition:characteristic-factor-along-bohr-set}, subjected to some minor changes that we write explicitly. In this proof, all the limits are taken in $L^{2}(\mu)$.
	Without loss of generality, we assume $\mathbb{E}(f_1|Z_1) = 0$.
	Let $L$ be the limit on the left hand side of (\ref{equation:char-bohr-poly}) and $d(S_{\delta})$ be the uniform density of $S_{\delta}$. Since $\{(q_1(n)\alpha,q_2(n)\alpha)\}$ is well distributed on $Z_1\times Z_1$ (because $q_1$ and $q_2$ are linearly independent over $\mathbb{Q}$), we have
	\begin{multline}
	d(S_{\delta}) L = \lim_{N - M \to \infty} \frac{1}{N-M} \sum_{n=M}^{N-1} \mathbbm{1}_{S_{\delta}}(n) f_1(T^{p_1(n)}x) f_2(T^{p_2(n)} x) f_3(T^{p_3(n)}x) = \\
	\lim_{N - M \to \infty} \frac{1}{N-M}\sum_{n=M}^{N-1} \mathbbm{1}_{B(\delta)\times B(\delta)}(q_1(n) \alpha,q_2(n)\alpha) f_1(T^{p_1(n)}x) f_2(T^{p_2(n)} x) f_3(T^{p_3(n)}x).
	\end{multline}
	Approximating the Riemann integrable function $\mathbbm{1}_{B(\delta)\times B(\delta)}$ by finite linear combination of characters, it suffices to show
	\begin{equation} \label{equation:char-bohr-poly-1.5}
	\lim_{N - M \to \infty} \frac{1}{N-M}\sum_{n=M}^{N-1} \chi_1 (q_1(n) \alpha)\chi_2(q_2(n)\alpha) f_1(T^{p_1(n)}x) f_2(T^{p_2(n)} x) f_3(T^{p_3(n)}x) = 0
	\end{equation}
	for all characters $(\chi_1,\chi_2)$ of $Z_1\times Z_1$. Note that the limit in the left hand side of (\ref{equation:char-bohr-poly-1.5})
	is equal to
	\begin{equation} \label{equation:char-poly}
	\bar{\chi}_1(y)\bar{\chi}_2(z) \lim_{N - M \to \infty} \frac{1}{N-M}\sum_{n=M}^{N-1} \chi_1(q_1(n) \alpha + y)\chi_2(q_2(n) \alpha + z) f_1(T^{p_1(n)}x) f_2(T^{ p_2(n)} x) f_3(T^{p_3(n)}x)
	\end{equation}
	for every $y,z\in Z_1$.
	Since $G$ is connected, there exist $g,h \in G$ such that $a g = \alpha$ and $bh=\alpha$. Let $\alpha/a$ and $\alpha/b$ denote the elements $g$ and $h$, respectively. Consider the system $Y = (X \times Z_1\times Z_1, \mathcal{B} \times \mathcal{G}\times \mathcal{G}, \mu \times \mu_{Z_1}\times \mu_{Z_1}, \tilde{T})$, where $\tilde{T}=T \times (\alpha/a)\times (\alpha/b)$.  We can write then \begin{multline}
	\bar{\chi}_1(y)\bar{\chi}_2(z) \lim_{N - M \to \infty} \frac{1}{N-M}\sum_{n=M}^{N-1} \chi_1(q_1(n) \alpha + y)\chi_2(q_2(n) \alpha + z) f_1(T^{p_1(n)}x) f_2(T^{ p_2(n)} x) f_3(T^{p_3(n)}x) \\
	= \lim_{N - M \to \infty}\frac{1}{N-M}\sum_{n=M}^{N-1} \tilde{T}^{p_1(n)}f_1 \otimes 1 \otimes 1 \cdot \tilde{T}^{p_2(n)} f_2 \otimes \chi_1 \otimes 1\cdot  \tilde{T}^{p_3(n)} f_3 \otimes 1\otimes \chi_2.
	\end{multline}

	Since $\mathbb{E}\big(f_1|Z_1(X)\big) = 0$, for almost every ergodic component $Y_t$ of $Y$, we have $\mathbb{E}(f_1 \otimes \chi_1\otimes 1|Z_1(Y_t)) = 0$ (one way to verify this is to show $\lVert f_1 \otimes \chi_1 \otimes 1 \rVert_{2} = 0$, where $\lVert \cdot \rVert_k$ is the Host-Kra's seminorm defined in \cite{Host_Kra05}).

		Also, almost every ergodic component $Y_t$ can be written as $G_{t}/\Gamma_t$ with $G_{t}$ being connected. To see this, let $\widehat{Y}_t$ be the closure of the $\R$-flow
	associated to $\tilde{T}$, i.e. $(t\cdot \tilde{T})_{t\in \R}$ of a point in $Y_t$. By a special case of Ratner's theorems \cite{Ratner91a} we have that $\widehat{Y}_t=G_{t}/\Gamma_t$ where $G_t$ is a closed connected subgroup of $G$ and $\Gamma_t=G_t\cap \Gamma$.
	We claim that $\widehat{Y}_t={Y}_t$ and for this, it suffices to show that the orbit under $\tilde{T}$ of a point in ${Y}_t$ is dense in  $\widehat{Y}_t$. To prove this, it suffices to show that the projection of this orbit onto $G_t/([G_t,G_t]\Gamma_t)$ is dense. This latter space is a factor of a translation of the closure of  $\{ (t\alpha, t\alpha/a, t\alpha/b) : t\in \R\}$ (in $Z_1\times Z_1\times Z_1$), while the orbit of a point under $\tilde{T}$ is a translation of the closure of $ \{ (n\alpha, n\alpha/a, n\alpha/b) :n\in \Z \}$. Since the closures of $\{ (t\alpha, t\alpha/a, t\alpha/b) : t\in \R\}$ and $\{(n\alpha, n\alpha/a, n\alpha/b) :n\in \Z \}$ coincide we get the desired conclusion.

 By assumption, the $C$-complexity of  $\{p_1,p_2,p_3\}$ is $1$, and we get  \begin{equation}
	\lim_{N - M \to \infty}\frac{1}{N-M}\sum_{n=M}^{N-1} \tilde{T}^{p_1(n)}f_1 \otimes 1 \otimes 1 \cdot \tilde{T}^{p_2(n)} f_2 \otimes \chi_1 \otimes 1\cdot  \tilde{T}^{p_3(n)} f_3 \otimes 1\otimes \chi_2 =0
	\end{equation}
	for almost every $t$.  It follows that \eqref{equation:char-poly} equals to 0 in $L^2(\mu\times \mu_{Z_1}\times \mu_{Z_1})$, which implies that the left hand side of \eqref{equation:char-bohr-poly} is equal to $0$ in $L^2(\mu)$.  This finishes the proof.
\end{proof}
\medbreak

\begin{lemma} \label{proposition:globn2nilconnected}
	Let $(G/\Gamma,\mu,T)$ be a nilsystem with $G$ connected. Let $p_1,p_2,p_3$ be three polynomials as in \cref{proposition:Bohr-Poly-Average}.
	Then
	for every $A\in\mathcal{B}$ and every $\epsilon>0$, the set
	\[\big\{n\in\N\colon\mu(A\cap T^{-p_1(n)}A \cap T^{-p_2(n)}A \cap T^{-p_3(n)}A)>\mu(A)^{4}-\epsilon\big\}\]
	is syndetic.
\end{lemma}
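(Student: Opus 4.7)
The plan has two stages: first, show that the average of $\phi(n):=\mu(A\cap T^{-p_1(n)}A\cap T^{-p_2(n)}A\cap T^{-p_3(n)}A)$ along the Bohr$_0$ set $S_\delta$ from \cref{proposition:Bohr-Poly-Average} tends to something $\geq\mu(A)^4$ as $\delta\to 0$; second, upgrade this average bound to syndeticity by invoking well distribution of polynomial orbits on the connected nilmanifold $G/\Gamma$.

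For the first stage, put $f=\mathbbm{1}_A$ and $g=\mathbb{E}(f\mid Z_1)$, where $Z_1$ is the Kronecker factor of $X$. Decompose each translated factor as $T^{p_i(n)}f=T^{p_i(n)}g+T^{p_i(n)}(f-g)$ and expand. By \cref{proposition:Bohr-Poly-Average}, every one of the seven terms in the resulting expansion which contains at least one factor $T^{p_i(n)}(f-g)$ contributes zero to the $S_\delta$-average in $L^2(\mu)$, so
\[
\lim_{N-M\to\infty}\frac{1}{|S_\delta\cap[M,N)|}\sum_{n\in S_\delta\cap[M,N)}\phi(n)=\lim_{N-M\to\infty}\frac{1}{|S_\delta\cap[M,N)|}\sum_{n\in S_\delta\cap[M,N)}\int_X f\cdot\prod_{i=1}^3 T^{p_i(n)}g\,d\mu.
\]
For $n\in S_\delta$, each $p_i(n)\alpha\in Z_1$ lies in a ball of radius $O(\delta)$ around $0$, since $p_i(0)=0$ and the $p_i$ are integer combinations of $q_1,q_2$. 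Because translation on the compact abelian group $Z_1$ acts continuously in $L^2$, replacing each $T^{p_i(n)}g$ by $g$ introduces an error (bounded by Cauchy--Schwarz and $|f|,|g|\le 1$) that vanishes uniformly in $n$ as $\delta\to 0$. The full double limit therefore equals $\int_X f\cdot g^3\,d\mu=\int_{Z_1}g^4\,d\mu_{Z_1}\geq\mu(A)^4$, the last step by Jensen's inequality.

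For the second stage, fix $\epsilon>0$ and approximate $f$ in $L^1(\mu)$ by a continuous function $\tilde f\colon X\to[0,1]$ closely enough that the corresponding $\tilde\phi(n):=\int_X\tilde f\cdot\prod_{i=1}^3 T^{p_i(n)}\tilde f\,d\mu$ satisfies $|\phi(n)-\tilde\phi(n)|<\epsilon/4$ uniformly in $n$. Then $\tilde\phi(n)=\tilde F\bigl(\tau^{p_1(n)}\Gamma,\tau^{p_2(n)}\Gamma,\tau^{p_3(n)}\Gamma\bigr)$ for an explicit $\tilde F\in C(X^3)$. Since $G^3$ is connected, Leibman's theorem on polynomial orbits in connected nilmanifolds implies that the sequence $n\mapsto\bigl(\tau^{p_1(n)}\Gamma,\tau^{p_2(n)}\Gamma,\tau^{p_3(n)}\Gamma\bigr)$ is well distributed on its orbit closure $Y\subseteq X^3$. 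From the first stage, for a suitable $\delta>0$ a positive density of $n\in S_\delta$ satisfy $\phi(n)>\mu(A)^4-\epsilon/2$; fixing any such $n_0$ gives $\tilde\phi(n_0)>\mu(A)^4-3\epsilon/4$, so that the open set $U:=\{y\in Y\colon\tilde F(y)>\mu(A)^4-3\epsilon/4\}$ is nonempty. Well distribution on $Y$ then yields
\[
\liminf_{N-M\to\infty}\frac{\big|\{n\in[M,N)\colon\bigl(\tau^{p_1(n)}\Gamma,\tau^{p_2(n)}\Gamma,\tau^{p_3(n)}\Gamma\bigr)\in U\}\big|}{N-M}>0,
\]
and positive lower uniform density in $\mathbb{N}$ is equivalent to syndeticity. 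For each such $n$, $\phi(n)>\tilde\phi(n)-\epsilon/4>\mu(A)^4-\epsilon$, completing the argument.

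The main technical hurdle is the second stage: one must handle the discontinuity of $\mathbbm{1}_A$ by continuous approximation while carefully tracking that the resulting nilsequence $\tilde\phi$ still exceeds the threshold at the chosen $n_0$, and one must invoke the correct form of Leibman's polynomial equidistribution (well distribution, not merely Ces\`aro equidistribution) to convert positive density of the good set in $\mathbb{N}$ into syndeticity.
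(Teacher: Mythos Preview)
Your first stage is correct and is essentially the paper's entire proof. What you seem to have missed is that the first stage already delivers syndeticity, so the second stage is superfluous. Indeed, the sequence $\big(q_1(n)\alpha,q_2(n)\alpha\big)$ is well distributed on $Z_1\times Z_1$ (this is used in the proof of \cref{proposition:Bohr-Poly-Average}), so $S_\delta$ has positive \emph{uniform} density and in particular is syndetic. Given $\epsilon>0$, pick $\delta>0$ so that the $S_\delta$--uniform average of $\phi$ is at least $\mu(A)^4-\epsilon/2$. Since $0\le\phi\le 1$, a Markov-type count shows that for all sufficiently long intervals $[M,N)$ a fixed positive proportion of $n\in S_\delta\cap[M,N)$ satisfy $\phi(n)>\mu(A)^4-\epsilon$; combined with the positive uniform density of $S_\delta$, this gives positive lower uniform density of $\{n:\phi(n)>\mu(A)^4-\epsilon\}$, which is the same as syndeticity. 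The paper simply writes the average inequality over $S_\delta$ and says ``which finishes the proof.''

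Your second stage is not only unnecessary but contains a genuine error. You assert that $\tilde\phi(n)=\tilde F\big(\tau^{p_1(n)}\Gamma,\tau^{p_2(n)}\Gamma,\tau^{p_3(n)}\Gamma\big)$ for some $\tilde F\in C(X^3)$. There is no such $\tilde F$: the quantity $\int_X \tilde f(x)\,\tilde f(g_1\cdot x)\,\tilde f(g_2\cdot x)\,\tilde f(g_3\cdot x)\,d\mu(x)$ depends on $g_i\in G$, not merely on the cosets $g_i\Gamma$, because left translation by $g_i\gamma$ with $\gamma\in\Gamma$ does not fix points of $X$ in general. One can show $\tilde\phi$ is a nilsequence, but this requires a different (and substantially more involved) construction than the one you sketched. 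Fortunately none of this is needed.
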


\begin{proof}
	Let $\epsilon>0$, $A\in \mathcal{B}$ and set $f=\mathbb{E}(\one_A\vert Z_1)$. Let  $\delta'>0$ be such that the translation $f_t(\cdot)=f(\cdot+t)$ satisfies that $\| f -f_t\|_{L^1(\mu_{Z_1})} < \frac{\epsilon}{3}$ if $t \in B(\delta')$.
	Let $\delta '>\delta>0$ be such that if $q_1(n)\alpha$ and $q_2(n)\alpha$ are in $B(\delta)$, then $p_1(n)\alpha,p_2(n)\alpha$ and $p_3(n)\alpha$ are in  $B(\delta')$.
	Then, for $n \in S_{\delta}$, we have that $\|f-T^{p_i(n)}f\|_{L^1(\mu_{Z_1})}<\frac{\epsilon}{3}$ for $i=1,2,3$ and thus
	
	\begin{equation} \label{equation:good-poly-kronecker}\int f\cdot T^{p_1(n)} f\cdot T^{p_2(n)} f\cdot T^{p_3(n)} f ~ d\mu_{Z_1} > \int f^4d\mu_{Z_1} - 3\frac{\epsilon}{3} \geq \mu(A)^4-\epsilon.
	\end{equation}
	
	By \eqref{equation:good-poly-kronecker} and Proposition \ref{proposition:Bohr-Poly-Average}, we get that
	\begin{equation}
	\lim_{N - M \to \infty} \frac{1}{|S_{\delta} \cap [M, N)|} \sum_{n \in S_{\delta} \cap [M,N)} \mu( A\cap T^{-p_1(n)}(A) \cap T^{-p_2(n)}A \cap T^{-p_3(n)}A) \geq  \mu(A)^4-\epsilon,
	\end{equation}  	 	  	
which finishes the proof.
\end{proof}

\begin{proposition}
	\label{proposition:globn2connected-general}
	Let $p_1, p_2, p_3$ be as in \cref{proposition:characteristic-factor-along-bohr-set-poly}.
	Let $(X,\mathcal{B},\mu,T)$ be an ergodic system and $Z_3$ the 3-step nilfactor of $X$. Assume $Z_3$ is the inverse limit of nilsystems that can be represented as $G/\Gamma$ with $G$ connected. Then for every $A \in \mathcal{B}$ and $\epsilon > 0$, the set
	\begin{equation}\nonumber
	\big\{n\in\N\colon\mu(A\cap T^{-p_1(n)}A \cap T^{-p_2(n)}A \cap T^{-p_3(n)}A)>\mu(A)^{4}-\epsilon\big\}
	\end{equation}
	is syndetic.
\end{proposition}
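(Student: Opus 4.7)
The plan is to reduce the statement to the connected-nilsystem case already handled by \cref{proposition:globn2nilconnected}. First, by the Host-Kra-Ziegler structure theorem together with its polynomial extension due to Leibman, the factor $Z_3$ is characteristic for the averages of $T^{p_1(n)}f_1 \cdot T^{p_2(n)}f_2 \cdot T^{p_3(n)}f_3$, and the same remains true along Bohr sets defined from the Kronecker factor of $X$ (by an argument parallel to \cref{proposition:Bohr-Poly-Average}, exploiting the disjointness of irrational rotations from nilsystems of bounded step). Setting $f := \mathbb{E}(\mathbf{1}_A \mid Z_3)$, which satisfies $\int f \,d\mu = \mu(A)$ and $\int f^{4}\,d\mu \geq \mu(A)^4$ by Jensen's inequality, we then exploit the inverse-limit hypothesis $Z_3 = \varprojlim_j Y_j$ with each $Y_j = G_j/\Gamma_j$ having connected $G_j$: for any $\eta > 0$, martingale convergence provides a $j$ so that $f_j := \mathbb{E}(f \mid Y_j)$ is within $\eta$ of $f$ in $L^2$, which allows $f_j$ to replace $f$ in the four-fold integral with error $O(\eta)$, uniformly in $n$, by Cauchy--Schwarz and the bound $0 \le f, f_j \le 1$.

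On the connected nilsystem $Y_j$, the strategy of \cref{proposition:globn2nilconnected} then applies nearly verbatim with $f_j$ in place of $\mathbb{E}(\mathbf{1}_A \mid Z_1)$. Let $g_j := \mathbb{E}(f_j \mid Z_1(Y_j))$, and using $L^1$-continuity of translation on the Kronecker factor $Z_1(Y_j)$, choose $\delta > 0$ small enough that for each $n$ in the Bohr set $S_\delta := \{n \in \N : q_1(n)\alpha, q_2(n)\alpha \in B(\delta)\}$, one has $\|g_j - T^{p_i(n)}g_j\|_{L^1} < \epsilon/12$ for $i = 1,2,3$. This yields, pointwise for each such $n$,
\[
\int_{Z_1(Y_j)} g_j \cdot T^{p_1(n)}g_j \cdot T^{p_2(n)}g_j \cdot T^{p_3(n)}g_j \, d\mu_{Z_1(Y_j)} > \mu(A)^4 - \epsilon/4.
\]
Applying \cref{proposition:Bohr-Poly-Average} to $Y_j$ and unwinding the reductions of the first paragraph (choosing $\eta$ small in terms of $\epsilon$ and $\mu(A)$), one arrives at the averaged lower bound
\[
\lim_{N-M \to \infty} \frac{1}{\vert S_\delta \cap [M,N) \vert} \sum_{n \in S_\delta \cap [M,N)} \mu(A \cap T^{-p_1(n)} A \cap T^{-p_2(n)} A \cap T^{-p_3(n)} A) \geq \mu(A)^4 - \epsilon/2.
\]

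To upgrade this averaged bound to the syndeticity asserted in the statement, I would invoke \cite[Theorem 4.1]{Leibman10} to decompose the function $n \mapsto \mu(A \cap T^{-p_1(n)}A \cap T^{-p_2(n)}A \cap T^{-p_3(n)}A)$ as a nilsequence $\psi(n)$ plus a sequence of zero uniform density. Since $S_\delta$ has positive uniform density, the averaged lower bound forces the nilsequence component $\psi$ to exceed $\mu(A)^4 - 3\epsilon/4$ at some point of $S_\delta$; by minimality of the underlying nilsystem, the return times of the orbit to any nonempty open set are syndetic in $\N$, and hence the set $\{n : \psi(n) > \mu(A)^4 - \epsilon\}$ is syndetic, as is the set in the statement. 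The main obstacle in this plan is the first step: a careful verification of the characteristic-factor reduction along the Bohr set $S_\delta$ for a general ergodic $X$ (the analog of \cref{proposition:Bohr-Poly-Average} outside the single-nilsystem setting). The hypothesis that $Z_3$ be an inverse limit of \emph{connected} nilsystems is essential here, since it is precisely what allows the disjointness and equidistribution arguments available on each $Y_j$ to be transported uniformly to $X$.
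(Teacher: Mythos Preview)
Your overall architecture---reduce to $Z_3$, approximate by a connected nilsystem $Y_j$, and invoke \cref{proposition:globn2nilconnected}---matches the paper's. The substantive difference is in how the reduction to $Z_3$ is carried out, and here your proposal has a genuine gap while the paper takes a different, self-contained route.

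You propose to show directly that $Z_3$ remains characteristic for the averages \emph{restricted to the Bohr set $S_\delta$}, ``by an argument parallel to \cref{proposition:Bohr-Poly-Average}.'' But that lemma relies crucially on $X$ being a nilsystem with $G$ connected (to extract roots $\alpha/a$, $\alpha/b$ and to invoke $C$-complexity on the ergodic components of the product), and you acknowledge that lifting this to a general ergodic $X$ is the ``main obstacle.'' No argument is given to overcome it; the closing sentence about transporting disjointness from each $Y_j$ to $X$ is too vague to count as one. The paper avoids this obstacle entirely: rather than proving a characteristic-factor statement along $S_\delta$, it shows unconditionally that
\[
\lim_{N-M\to\infty}\frac{1}{N-M}\sum_{n=M}^{N-1}\big|a(n)-\tilde a(n)\big|^{2}=0,
\]
where $a(n)$ is the four-fold intersection measure and $\tilde a(n)$ its analogue with $\mathbb{E}(\one_A\mid Z_3)$. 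This is obtained by expanding the square, passing to the product system $(X\times X,T\times T)$, applying the ergodic decomposition, and invoking on each ergodic component the fact (\cite[Theorem~B]{Frantzikinakis08}) that $Z_2$ is characteristic for $\{p_1,p_2,p_3\}$, together with \cite[Proposition~4.3]{Bergelson_Host_Kra05} to propagate $\mathbb{E}(f_0\mid Z_3(X))=0$ to $\mathbb{E}(f_0\otimes f_0\mid Z_2)=0$ on components. Once $a(n)-\tilde a(n)\to0$ in uniform density, the reduction to $Z_3$ is automatic along \emph{any} set of positive uniform density, including $S_\delta$.

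Two minor remarks. First, your final nilsequence step is unnecessary: the averaged lower bound along the syndetic set $S_\delta$ already forces $\{n:a(n)>\mu(A)^4-\epsilon\}$ to have positive lower Banach density, which is equivalent to syndeticity. Second, the connectedness hypothesis on $Z_3$ is used only \emph{after} the reduction (inside \cref{proposition:Bohr-Poly-Average} and \cref{proposition:globn2nilconnected}); it plays no role in the paper's reduction step itself.
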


\begin{proof}
	Define $a(n)=\int \one_A \cdot \one_A \circ T^{p_1(n)} \cdot \one_A \circ T^{p_2(n)} \cdot \one_A \circ T^{p_3(n)} d\mu$ and $\tilde{a}(n)=\int \mathbb{E}( \one_A \vert Z_3) \cdot \mathbb{E}( \one_A \vert Z_3)\circ T^{p_1(n)} \cdot  \mathbb{E}( \one_A \vert Z_3) \circ T^{p_2(n)} \cdot  \mathbb{E}( \one_A \vert Z_3)\circ T^{p_3(n)} d\mu$.  We claim that  \[\limsup\limits_{N-M\to \infty} \frac{1}{N-M}\sum_{n=M}^{N-1} |a(n)-\tilde{a}(n)|^2= 0.\]
	The proof is essentially given in \cite[Corollary 4.5]{Bergelson_Host_Kra05}. Using a telescoping difference between $a(n)$ and $\tilde{a}(n)$, it suffices to show that
	if some $f_i, 0\leq i\leq 3$ has 0 conditional with respect to $Z_3(X)$, then
	\[ \lim_{N-M\to\infty}\frac{1}{N-M} \sum_{n=M}^{N} \left (\int f_0(x)f_1(T^{p_1(n)}x)f_2(T^{p_2(n)}x)f_3(T^{p_3(n)}x)d\mu \right)^2=0. \]

	We assume without loss of generality that  $\mathbb{E}(f_{0}\vert Z_3(X))=0$. Let $\mu \times \mu=\int_{Z} d\mu_s dm(s)$ be the ergodic decomposition of $\mu\times \mu$ under $T\times T$.
	By \cite[Proposition 4.3]{Bergelson_Host_Kra05}, for almost every $s$, $\mathbb{E}(f_{0}\otimes f_{0} \vert Z_2 (X\times X))=0$, where $X \times X$ is endowed with the measure $\mu_s$ and the transformation $T\times T$.	
		By \cite[Theorem B]{Frantzikinakis08}, the 2-step nilfactor is characteristic for the average $\lim \limits_{N-M\to\infty}\frac{1}{N-M}\sum_{i=M}^{N-1} T^{p_1(n)} f_1 \cdot T^{p_2(n)} f_2 \cdot T^{p_3(n)} f_3$,  for any bounded measurable functions $f_1,f_2,f_3$ of any measure preserving system. Therefore, the limit as $N-M$ goes to infinity of{\small
		\begin{equation}  \label{equation:cerolimituniform} \frac{1}{N-M} \sum_{n=M}^{N} \int f_0(x)f_0(x')f_1(T^{p_1(n)}x)f_1(T^{p_1(n)}x')f_2(T^{p_2(n)}x)f_2(T^{p_2(n)}x')f_3(T^{p_3(n)}x)f_2(T^{p_3(n)}x')d\mu_s(x,x') \end{equation}}
		is equal to 0 for almost every $s$. Integrating \eqref{equation:cerolimituniform} with respect to $s$, we deduce the claim.

	By the claim, it suffices to prove the result under the assumption that $X=Z_3$. By an approximation argument, we can assume that $(X=G/\Gamma,\mathcal{B},\mu,T)$  where $G$ is connected. Proposition \ref{proposition:globn2nilconnected} give us the desired conclusion.
\end{proof}

\begin{proof}[Proof of \cref{proposition:globn2connected}]
It follows immediately from \cref{proposition:globn2connected-general}, with $p_1(n) = n, p_2(n) = n^2$ and $p_3(n) = 2n$ for $n \in \Z$. 	
\end{proof}

\subsection{Proof of \texorpdfstring{\cref{proposition:commutingpoly}}{Proposition 1.13}}
Let $X=\mathbb{T}^2$, $\mu$ be the Lebesgue measure on $\mathbb{T}^2$,  $T_1\colon (x,y) \mapsto (x+\alpha,y+2x+\alpha)$, and $T_2\colon (x,y)\mapsto (x,y-2\alpha)$. Then $T_1$ and $T_2$ commute, preserve the measure  $\mu$, and moreover, $T_1$ is ergodic for $\mu$.
	
	We have that $T_1^n(x,y)=(x+n\alpha, y +2nx+n^2\alpha)$, $T_1^{2n}(x,y)=(x+2n\alpha,y+4nx+4n^2\alpha)$ and $T_2^{n^2}(x,y)=(x,y-2n^2\alpha)$. Write $u=y +2nx+n^2\alpha$, $v=y+4nx+4n^2\alpha$ and $w=y+2n^2\alpha$.
	Then $v-2u+w=0$. 	
	Let $\Lambda\subseteq [N]$ be a subset with no arithmetic progression of length 3 and  set $A=\mathbb{T}\times \bigcup\limits_{a\in \Lambda} (\frac{a}{N}-\frac{1}{4N},\frac{a}{N}+\frac{1}{4N})$.
	
	If $\one_A(x,y)\one_A(T_1^n(x,y))\one_A(T_1^{2n}(x,y)) \one_A(T_2^{n^2}(x,y))>0$, then there exist $a_0,a_1,a_2\in \Lambda$ such that $ u \in (a_0-\frac{1}{4n},a_0+\frac{1}{4N})$, $v\in (a_1-\frac{1}{4N},a_1+\frac{1}{4N}) $, $w\in (a_2-\frac{1}{4N},a_2+\frac{1}{4N})$. Thus
	\[ a_1-2a_0+a_2  + t = 0 \]	
	for some $|t|\leq \frac{1}{N}$. So $a_1-2a_0+a_2=0$ and  then $a_0=a_1=a_2$. It follows that $nx\in (1/4N,1/4N)$ and
	
	\[ \int \one_A(x,y)\one_A(T_1^n(x,y))\one_A(T_1^{2n}(x,y) \one_A(T_2^{n^2}(x,y)) d\mu(x,y) \leq  \frac{1}{N^2}|\Lambda|. \]
	A quick computation shows that
	\[ \frac{1}{N^2}|\Lambda| \leq \left (\frac{|\Lambda|}{2N}\right )^{\ell}=\mu(A)^{\ell}   \]
	as long as $\ell\leq \frac{2\log(N)-\log(|\Lambda|) }{\log(2N)-\log(|\Lambda|)}.$
	In view of \cite{Behrend46}, we can take $\Lambda$ of cardinality $N^{1-\epsilon}$ for any $\epsilon>0$, and hence we can make the fraction $\frac{2\log(N)-\log(|\Lambda|) }{\log(2N)-\log(|\Lambda|)}$ arbitrarily large, finishing the proof.
\qed

\bibliographystyle{plain}
\bibliography{refs}

\end{document}